\newtheorem{theorem}{Theorem}[section]
\newtheorem{lemma}[theorem]{Lemma}
\newtheorem{corollary}[theorem]{Corollary}
\newtheorem{proposition}[theorem]{Proposition}
\newcommand*{\textlabel}[2]{%
  \edef\@currentlabel{#1}
  \phantomsection
  #1\label{#2}
}
\theoremstyle{definition}
\newtheorem{definition}[theorem]{Definition}
\newtheorem{convention}[theorem]{Convention}
\newtheorem{example}[theorem]{Example}
\newtheorem*{lemma*}{Lemma}
\theoremstyle{remark}
\newtheorem{remark}[theorem]{Remark}
\numberwithin{equation}{section}
\newcommand{\abs}[1]{\lvert#1\rvert}
\newcommand{\mc}{\mathcal}
\newcommand{\mQ}{\mathcal{Q}}
\newcommand{\czero}{\mathrm{CAT}(0)}
\newcommand{\td}[1]{\tilde{#1}}
\newcommand{\tr}{t}
\begin{document}

\title{Graphical complexes of groups}

\author[T. Prytu{\l}a]{Tomasz Prytu\l a}

\address{Department of applied mathematics and computer science, Technical University of Denmark, Lyngby, Denmark}

\email{tompr@dtu.dk}

\subjclass[2010]{Primary 20F65, 20F67; Secondary 20F55}

\date{\today}

\keywords{simple complex of groups, small cancellation, systolic complex}

\begin{abstract}
  We introduce graphical complexes of groups, which can be thought of as a generalisation of Coxeter systems with 1-dimensional nerves. We show that these complexes are strictly developable, and we equip the resulting Basic Construction with three structures of non-positive curvature: piecewise linear CAT(0), C(6) graphical small cancellation, and a systolic one. We then use these structures to establish various properties of the fundamental groups of these complexes, such as biautomaticity and Tits Alternative. We isolate an easily checkable condition implying hyperbolicity of the fundamental groups, and we construct some non-hyperbolic examples. We also briefly discuss a parallel theory of C(4)-T(4) graphical complexes of groups and outline their basic properties.
\end{abstract}

\maketitle

\section{Introduction}

Given a simplicial graph $\Gamma$ of girth at least four, consider the associated right-angled Coxeter group $W(\Gamma)$. The Davis complex for $W(\Gamma)$ has a natural structure of a graphical small cancellation complex, in the sense that it is built out of copies of a simplicial cone over $\Gamma$, with two such copies sharing at most a subgraph of $\Gamma$ in each copy. Small cancellation conditions of this complex are controlled by the girth of $\Gamma$. Moreover, this graphical small cancellation structure is compatible with the structure of a simple complex of groups, the latter being more commonly used in the context of Coxeter groups.

This observation leads us to the concept of \emph{graphical complexes of groups}, which can be thought of as a generalisation of Coxeter groups with $1$--dimensional nerves. In this article we introduce and initiate the systematic study of graphical complexes of groups. The highlight of our theory is that one can easily construct examples of graphical complexes of groups, while at the same time they possess a rich geometry which allows one to establish strong properties of their fundamental groups, such as biautomaticity or Tits Alternative. This geometry is based on the interplay between small cancellation techniques, $\czero$ geometry, and systolic geometry.

Formally, a graphical complex of groups is a simple complex of finite groups over a finite, $1$--dimensional poset. Recall that a poset $\mQ$ is $1$--dimensional, if its geometric realisation $\abs{\mQ}$ is a graph. Such a poset has two types of vertices: big and small, and the only possible relation is that a big vertex is larger than a small vertex. Alternatively, a $1$--dimensional poset can be thought of as a bipartite graph. A graphical complex of groups $G(\mQ)$ over a finite, $1$--dimensional poset $\mQ$ consists of a family of finite groups $\{G_v\}_{v \in \mQ}$, called local groups, and a family of injective homomorphisms $\psi_{vw} \colon G_v \to G_w$ for every pair $v \leq w$. The fundamental group $G$ of $G(\mQ)$ is a direct limit of the system $(\{G_v\}_{v \in \mQ}, \{\psi_{vw}\}_{v\leq w})$. We say that $G(\mQ)$ is strictly developable if for every local group the canonical map to the limit is injective. For a strictly developable complex $G(\mQ)$, the so-called Basic Construction is a $2$--dimensional $G$--CW--complex $D(G, C(\abs{\mQ}))$, which can be thought of as an analogue of the Davis complex for Coxeter groups. In particular, like the Davis complex, $D(G, C(\abs{\mQ}))$ is also built out of copies of the simplicial cone $C(\abs{\mQ})$ over $\abs{\mQ}$, and the $G$--action on $D(G, C(\abs{\mQ}))$ is proper and cocompact. For an integer $k \geq4$, we say that $G(\mQ)$ is $k$--huge if the girth of $\abs{\mQ}$ is at least $2k$.

\begin{theorem}[Theorem~\ref{thm:developable}] 
  Let $G(\mQ)$ be a $k$--huge graphical complex of groups, where $k \geq 6$. Then $C(\abs{\mQ})$ admits a piecewise linear metric such that $G(\mQ)$ is non-positively curved. Consequently, $G(\mQ)$ is strictly developable and $D(G, C(\abs{\mQ}))$ admits a $G$--invariant $\czero$ metric.
\end{theorem}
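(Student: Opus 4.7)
The plan is to construct a piecewise Euclidean metric on $C(\abs{\mQ})$, verify Gromov's link condition in every local development, and invoke Bridson--Haefliger's criterion for non-positively curved complexes of groups (Chapter~III.C) to conclude. Strict developability and the existence of the $G$-invariant $\czero$ metric on $D(G, C(\abs{\mQ}))$ will then be immediate consequences.

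I would first equip each $2$-simplex $\sigma = (*, b, s)$ of $C(\abs{\mQ})$ with an isometric copy of a Euclidean triangle having angle $\pi/k$ at the cone-point corner, angle $\pi/3$ at the big-vertex corner, and angle $2\pi/3 - \pi/k$ at the small-vertex corner. These three angles sum to $\pi$ for every $k$, and the small-vertex angle is at least $\pi/2$ exactly when $k \geq 6$, which will be crucial. Since the cone point carries the trivial local group, in any local development the link at a lift of $*$ is a copy of $\abs{\mQ}$ with every edge of length $\pi/k$; the $k$-huge hypothesis $\mathrm{girth}(\abs{\mQ}) \geq 2k$ then forces every embedded loop to have length at least $2\pi$, making this link CAT$(1)$.

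At a vertex $v \in \mQ$, the link in the local development at $v$ is a bipartite metric graph built from $\abs{G_v}$ copies of the star-link of $v$ in $C(\abs{\mQ})$, glued along the edge-stabilizers incident to $v$. At a big vertex $b$, a $4$-cycle in this link corresponds to a non-trivial element of $\psi_{sb}(G_s) \cap \psi_{s'b}(G_{s'}) \subset G_b$ for distinct small $s, s' < b$; once it is shown that such intersections are trivial, the link has girth at least $6$, and with edge lengths $\pi/3$ every embedded loop has length at least $2\pi$. At a small vertex $s$, the link is a complete bipartite graph of girth $4$, and with edges of length $2\pi/3 - \pi/k \geq \pi/2$ every loop has length at least $2\pi$. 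Once the link condition is verified in all three cases, Bridson--Haefliger's criterion gives non-positive curvature of $G(\mQ)$, hence strict developability; the Basic Construction $D(G, C(\abs{\mQ}))$, identified with the universal cover of the strict development, inherits the desired $G$-invariant piecewise linear $\czero$ metric.

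The principal obstacle I foresee is the triviality of the intersections $\psi_{sb}(G_s) \cap \psi_{s'b}(G_{s'})$ at each big vertex $b$. This is not a priori contained in the definition of a graphical complex of groups, and its proof will likely require a combinatorial argument tying the girth of $\abs{\mQ}$ to the absence of short loops in a suitable coset diagram over $G_b$. The hypothesis $k \geq 6$ is sharp for the Euclidean realisation, since $k = 6$ already saturates the small-vertex angle at $\pi/2$, and any smaller $k$ would violate the $4$-cycle condition there.
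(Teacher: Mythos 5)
Your construction is the same as the paper's: you metrise each triangle of $C(\abs{\mQ})$ Euclideanly (for $k=6$ your angles $\pi/6$, $\pi/3$, $\pi/2$ are exactly the paper's choice; the paper keeps these fixed for all $k\geq 6$ while you let the cone angle be $\pi/k$, an inessential variant), and you verify Gromov's link condition in the local developments at the three vertex types, with the cone-vertex link being $\abs{\mQ}$ handled by $k$--hugeness, the small-vertex link being a join/complete bipartite graph with edges of angular length at least $\pi/2$, and the big-vertex link requiring the exclusion of $4$--cycles; strict developability and the $G$--invariant $\czero$ metric on $D(G,C(\abs{\mQ}))$ then follow from the Bridson--Haefliger criterion exactly as in the paper.

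The one genuine flaw is your final paragraph. The ``principal obstacle'' you isolate --- triviality of $\psi_{v_1w}(G_{v_1})\cap\psi_{v_2w}(G_{v_2})$ for distinct small vertices $v_1,v_2\leq w$ --- is not something to be proved: it is condition (2) of Definition~\ref{def:gcog}, i.e.\ part of the data defining a graphical complex of groups, and the paper's treatment of the big-vertex link simply quotes it. Moreover, the route you propose for establishing it (deriving it from the girth of $\abs{\mQ}$ via a coset diagram over $G_w$) cannot work, because $k$--hugeness constrains only the poset $\mQ$ and says nothing about the homomorphisms: on any $6$--huge $\mQ$ one may take $G_w=\mathbb{Z}/4$ with two incident small groups $\mathbb{Z}/2$ both mapped onto the same index-$2$ subgroup, which satisfies the properness condition (1) but produces a $4$--cycle of angular length $4\pi/3<2\pi$ in the link at $w$, destroying non-positive curvature. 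So the intersection condition is logically independent of the girth hypothesis and must be assumed; once you recognise it as an axiom rather than a missing lemma, your argument is complete and coincides with the paper's proof.
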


Note that since $G$ acts on $D(G, C(\abs{\mQ}))$ properly and cocompactly, the above theorem implies that $G$ is a $\czero$ group. The following corollary essentially says that, in most cases, $G$ is a $2$--dimensional $\czero$ group.

\begin{corollary}[Corollary~\ref{cor:bredondim} and Convention~\ref{conv:connected}]
  The space $D(G, C(\abs{\mQ}))$ is a cocompact model for the classifying space for proper actions $\underline{E}G$. If $\abs{\mQ}$ contains at least one cycle then $G$ is infinite and the proper geometric dimension of $G$ is equal to $2$. In particular, $G$ is not virtually free.
\end{corollary}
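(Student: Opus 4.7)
My plan is to establish the three assertions of the corollary in turn.

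First, that $D := D(G, C(\abs{\mQ}))$ is a cocompact model for $\underline{E}G$ follows directly from the preceding theorem: $D$ is $\czero$ with proper cocompact $G$--action, hence contractible, and the standard circumcentre construction shows that for every finite subgroup $H \leq G$ the fixed-point set $D^H$ is non-empty, convex, and contractible (Bridson--Haefliger~II.2.8). This is exactly the criterion for a cocompact $\underline{E}G$. Since $C(\abs{\mQ})$ is a simplicial cone over a graph, $\dim D = 2$, yielding the upper bound $\underline{\mathrm{gd}}(G) \leq 2$.

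Next, a cycle in $\abs{\mQ}$ forces $G$ to be infinite. Under the standing convention that $\abs{\mQ}$ is connected, the cycle makes $\pi_1(\abs{\mQ})$ a non-trivial finitely generated free group, and the canonical surjection of the fundamental group of a simple complex of groups onto the topological $\pi_1$ of its underlying complex (obtained by killing all local groups) gives $G \twoheadrightarrow \pi_1(\abs{\mQ})$, forcing $G$ to be infinite. Alternatively, one may directly exhibit a hyperbolic isometry of $D$ via Bass--Serre theory applied to the restriction of $G(\mQ)$ to the cycle, lifted to the $\czero$ development.

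Finally, the lower bound $\underline{\mathrm{gd}}(G) \geq 2$ is equivalent, by Dunwoody's theorem, to $G$ not being virtually free, so the last sentence of the corollary is merely a restatement. Proving this non-virtual-freeness is the principal substantive step and, in my view, the main obstacle. My preferred approach is to exhibit a surface subgroup: any $2k$--cycle in $\abs{\mQ}$ cones to an essential triangulated disc in $C(\abs{\mQ})$, which develops to an embedded combinatorial disc in $D$; the graphical $\mathrm{C}(6)$ or systolic structure supplied by Theorem~\ref{thm:developable} should permit, after passing to a torsion-free finite-index subgroup of $G$, the extraction of a closed surface whose fundamental group embeds into $G$, via techniques standard in systolic and graphical small-cancellation geometry. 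As a backup, a CAT(0) visual-boundary argument can be deployed: the boundary circle of the developed disc contributes a topological circle to $\partial D$, contradicting the totally disconnected Gromov boundary of a hyperbolic virtually free group. Either way, once non-virtual-freeness is in hand, Dunwoody's theorem closes the gap between the upper bound $\underline{\mathrm{gd}}(G) \leq 2$ and the claimed equality.
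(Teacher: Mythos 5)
Your first assertion and the upper bound $\underline{\mathrm{gd}}G \leq 2$ match the paper (the Fixed Point Theorem \cite[Corollary~II.2.8]{BH} plus $\dim D(G,C(\abs{\mQ}))=2$), but the substantive step --- the lower bound $\underline{\mathrm{gd}}G \geq 2$ --- has a genuine gap. The paper gets it cohomologically: the strict fundamental domain is the cone, $C(\mQ)_{>c}=\mQ$, and $\widetilde{H}^1(\abs{\mQ},\mathbb{Z})\neq 0$ because $\abs{\mQ}$ contains a cycle, so \cite[Proposition~3.6]{PePry} gives $\underline{\mathrm{gd}}G\geq 2$; non-virtual-freeness is then a corollary, not the route. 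You run the implication the other way and propose to prove that $G$ is not virtually free by (i) passing to a torsion-free finite-index subgroup and (ii) extracting a closed surface subgroup. Step (i) is not available: whether $G$ is virtually torsion free is posed as an open question in Section~\ref{sec:commques} of this very paper. Step (ii) is not a standard tool either --- producing surface subgroups in systolic or graphical small cancellation groups is a hard open-ended problem, and nothing in Theorem~\ref{thm:developable} supplies it. The backup argument also fails: the developed cone over a $2k$--cycle is a \emph{compact} disc, so it contributes nothing to the boundary at infinity of $D(G,C(\abs{\mQ}))$; moreover $G$ need not be hyperbolic (Section~\ref{subsec:non-hypeg}), so to contradict virtual freeness via boundaries you would have to exhibit an actual nondegenerate connected subset of the visual boundary, which you do not do. (Your reduction of the last sentence to the lower bound via Stallings--Dunwoody is fine, but it does not by itself produce the lower bound.)

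A secondary error concerns infiniteness of $G$: there is no surjection $G \twoheadrightarrow \pi_1(\abs{\mQ})$ obtained by killing all local groups. By definition $G$ is the colimit of the system of local groups, so it is generated by their images and killing them yields the trivial group; equivalently, the underlying complex of the simple complex of groups is the cone $C(\abs{\mQ})$, which is simply connected, so the Bridson--Haefliger surjection onto the fundamental group of the underlying complex carries no information here. Infiniteness does hold, but in the paper it is a consequence of $\underline{\mathrm{gd}}G=2$ (or, independently, of geodesic completeness and Proposition~\ref{prop:inforderelements}), not of the homomorphism you describe.
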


In fact $C(\abs{\mQ})$ (and thus $D(G, C(\abs{\mQ}))$) is metrised with a single shape of cells, which is a triangle with angles $90^{\circ}$, $60^{\circ}$, and $30^{\circ}$, and the shortest edge of length $1$. Such a simple structure of $D(G, C(\abs{\mQ}))$ allows us to prove the Tits Alternative for $G$. Our approach is based on the concept of recurrent $2$--complexes introduced in a recent work of Osajda and Przytycki \cite{OsaPrzy}.

\begin{theorem}[Proposition~\ref{prop:titsalternative}] 
  Let $G(\mQ)$ be a $k$--huge graphical complex of groups for $k \geq 6$ and let $G$ be its fundamental group. Then any subgroup of $G$ is either finite,  virtually $\mathbb{Z}$, virtually  $\mathbb{Z}^2$, or contains a non-abelian free group.
\end{theorem}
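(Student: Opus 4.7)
The plan is to invoke the Tits Alternative for groups acting geometrically on recurrent 2--dimensional $\czero$ complexes established by Osajda and Przytycki in the cited work. By the previous theorem and its corollary, $G$ acts properly and cocompactly on the 2--dimensional $\czero$ complex $X = D(G, C(\abs{\mQ}))$, whose 2--cells are all isometric to a single right triangle with angles $90^{\circ}$, $60^{\circ}$, and $30^{\circ}$. The argument therefore reduces to verifying that $X$ satisfies the combinatorial recurrence hypothesis of Osajda--Przytycki; once this is done, their main theorem will directly yield the claimed four--class dichotomy for subgroups of $G$.

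For the verification step, I would analyze the local structure of $X$ at each of its vertex types, of which there are essentially three: the cone apices of the copies of $C(\abs{\mQ})$, and the images of big and small vertices of $\mQ$. At each type the link is a graph whose girth is controlled by the $k$--huge hypothesis ($k\geq 6$) and by the bipartite structure of the $1$--dimensional poset $\mQ$. The key point is that, because $X$ is tiled by a single isometry type of triangle and its vertex links have large girth, the combinatorial patterns of $2$--cells encountered along any geodesic segment are highly constrained. This rigidity should translate, cell by cell, into the recurrence property required by Osajda--Przytycki, and the verification should reduce to a finite combinatorial case check at each vertex type, exploiting the fact that the cone $C(\abs{\mQ})$ has a very rigid shape.

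The main obstacle will be this verification: one must unpack the Osajda--Przytycki definition of recurrence, reformulate it in terms of the Basic Construction $D(G, C(\abs{\mQ}))$, and confirm that the bound $k \geq 6$ is sharp enough for the condition to hold. The $\czero$ structure and the explicit single triangle shape are already in hand, so the difficulty is purely combinatorial rather than geometric. Once recurrence is established, the four classes claimed in the proposition --- finite, virtually $\mathbb{Z}$, virtually $\mathbb{Z}^2$, and subgroups containing a non-abelian free group --- follow as a direct application of the Osajda--Przytycki Tits Alternative, together with the standard fact that abelian subgroups of groups acting geometrically on $2$--dimensional $\czero$ spaces have rank at most $2$.
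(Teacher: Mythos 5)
Your high-level strategy coincides with the paper's: apply the Osajda--Przytycki theorem to the proper, cocompact isometric action of $G$ on the $2$--dimensional $\czero$ complex $D(G, C(\abs{\mQ}))$. However, the step you yourself single out as the main obstacle --- verifying that the complex is recurrent --- is precisely the step you do not carry out, and the route you sketch for it (girth of vertex links, rigidity of cell patterns along geodesics, sharpness of the bound $k \geq 6$) is not the mechanism that makes the verification work. Recurrence here does not depend on $k$--hugeness at all: by Remark~2.3 and Example~2.5 of Osajda--Przytycki it suffices to exhibit a simplicial map from $D(G, C(\abs{\mQ}))$ to a single triangle $T$ with angles $\frac{\pi}{2}$, $\frac{\pi}{3}$, $\frac{\pi}{6}$ which restricts to an isometry on every triangle, and such a map exists for the trivial structural reason that every $2$--simplex of the Basic Construction has exactly one vertex of each of the three types (small, big, cone), so one may send each vertex to the vertex of $T$ of its type. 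The hypothesis $k \geq 6$ enters only in producing the $\czero$ metric in the first place (Theorem~\ref{thm:developable}), not in the recurrence check. As written, your proposal leaves the decisive verification as an expectation (``should translate, cell by cell'') rather than an argument, so the proof is incomplete at its core.

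Two further points are missing. First, the Osajda--Przytycki Main Theorem also assumes an action without inversions and a uniform bound on the orders of finite subgroups; the latter holds because cocompactness forces every finite subgroup to be subconjugate to one of the finitely many local groups $G_v$, and this needs to be said. Second, and more substantially, that theorem yields the alternative only for finitely generated subgroups, whereas the statement (as the paper emphasizes) concerns arbitrary subgroups; the paper closes this gap by invoking Lemma~5.1 of Osajda--Przytycki, and your proposal does not address the passage from finitely generated subgroups to all subgroups.
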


Note that the conclusion we obtain is stronger than the `usual' Tits Alternative which concerns only finitely generated subgroups of $G$.

The above results rely almost exclusively on the (piecewise linear) $\czero$ structure of  $G(\mQ )$. We now begin to explore small cancellation and systolic features of graphical complexes of groups. 

\begin{theorem}[Theorem~\ref{thm:gcogissystolic}]
  Let $G(\mQ)$ be a $k$--huge graphical complex of groups for $k \geq 6$ and let $G$ be its fundamental group. Then $D(G, C(\abs{\mQ}))$ with its natural structure of a graphical small cancellation complex satisfies the $C(k)$--condition.
\end{theorem}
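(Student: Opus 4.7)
The plan is to describe the natural graphical small cancellation structure on $D(G, C(\abs{\mQ}))$ explicitly and then bound the lengths of pieces. In this setup the relators are the embedded copies $g\cdot\abs{\mQ}$ for $g \in G$, each appearing as the boundary of the cone $g\cdot C(\abs{\mQ})$; essential cycles in such a relator have length at least the girth of $\abs{\mQ}$, which by $k$-hugeness is $\geq 2k$. A piece is a labelled subpath that appears in two distinct translates $g\cdot\abs{\mQ}$ and $h\cdot\abs{\mQ}$. Therefore, if one can show that every piece has combinatorial length at most $2$, then any decomposition of a cycle of length $\geq 2k$ into pieces involves at least $k$ of them, yielding the $C(k)$ condition.

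To bound pieces I would identify them with intersections of distinct translates of the fundamental cone. By the definition of the basic construction, two translates $g\cdot C(\abs{\mQ})$ and $h\cdot C(\abs{\mQ})$ meet along a union of closed stars in $\abs{\mQ}$ indexed by those vertices $v$ for which $g^{-1}h$ lies in the local group $G_v$. The key claim is that this intersection is contained in the closed star of a single vertex of $\abs{\mQ}$, so that any common subpath of $g\cdot\abs{\mQ}$ and $h\cdot\abs{\mQ}$ has length at most $2$. To establish this I would argue that if two vertices $v$ and $w$ of $\abs{\mQ}$ that are not contained in a common star satisfy $g^{-1}h \in G_v \cap G_w$ (via the respective injections into $G$), then compatibility with the structural homomorphisms $\psi_{vw}$ forces a closed edge-loop in $\abs{\mQ}$ of length well below $2k$, contradicting the girth hypothesis together with $k\geq 6$.

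With the length bound on pieces in hand, the $C(k)$ condition follows immediately: a cycle of length $\geq 2k$ cannot be covered by fewer than $k$ pieces of length $\leq 2$. The main technical obstacle is precisely the analysis of intersections of cones — the argument that a \emph{long} piece in $\abs{\mQ}$ forces a \emph{short} cycle in $\abs{\mQ}$ requires a careful translation between the combinatorial data of the simple complex of groups and the geometry of the basic construction. Strict developability, supplied by Theorem~\ref{thm:developable}, is essential here: it guarantees that distinct cosets of local groups correspond to genuinely distinct cells in $D(G, C(\abs{\mQ}))$, so that a hypothetical long piece really does witness a short cycle in $\abs{\mQ}$, in contradiction to $k$-hugeness.
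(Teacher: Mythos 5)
Your global strategy (pieces have length at most $2$, while any embedded cycle supported on a cone-cell has length at least $\mathrm{girth}(\abs{\mQ})\geq 2k$, whence $C(k)$) is exactly the paper's. The gap is in your justification of the length bound for pieces. The configuration that actually threatens the bound is two \emph{distinct} small vertices $v_1\neq v_2$ lying below a common big vertex $w$ with $g^{-1}h$ contained in both $G_{v_1}$ and $G_{v_2}$ (canonical images in $G$): then $[g,C(\abs{\mQ})]\cap[h,C(\abs{\mQ})]$ would contain $\abs{\mQ_{\geq v_1}}\cup\abs{\mQ_{\geq v_2}}$, which carries embedded paths of length $4$ passing through $w$, and a cycle of length $2k$ could then be covered by fewer than $k$ pieces. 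Your argument never addresses this case --- your hypothesis that ``$v$ and $w$ are not contained in a common star'' explicitly excludes it --- and note that the union of those two stars is \emph{not} contained in the closed star of any single vertex, so your ``single star'' claim hinges precisely on excluding it. What excludes it is not the girth but condition (2) of Definition~\ref{def:gcog} combined with strict developability: the canonical maps $G_{v_i}\to G$ factor through $G_w\to G$, which is injective, so an element of the intersection of the images of $G_{v_1}$ and $G_{v_2}$ in $G$ pulls back to $\psi_{v_1w}(G_{v_1})\cap\psi_{v_2w}(G_{v_2})=\{e\}$, forcing the two cone-cells to coincide.

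The case you do treat (vertices with no common star) is both unnecessary and unproven. Unnecessary, because a piece is a connected path: two disjoint stars cannot both meet it unless it crosses a big vertex joining two stars, which is exactly the case above; so for the piece bound one only needs that two consecutive edges of a piece share their small vertex. Unproven, because the asserted mechanism --- that $g^{-1}h\in G_{v}\cap G_{v'}$ for far-apart small vertices ``forces a closed edge-loop in $\abs{\mQ}$ of length well below $2k$'' --- has no justification: an equality of elements in the colimit $G$ does not produce any loop in $\abs{\mQ}$, and nothing in the girth hypothesis rules such an intersection out. (It can in fact be ruled out, but by a different kind of argument, e.g.\ convexity of cone-cells in the $\czero$ development of Theorem~\ref{thm:developable} or a Bass--Serre argument, not by the girth; and for the theorem it is not needed, since isolated pieces of the intersection away from a connected path are irrelevant.) Once you replace your key claim by the weaker, purely local statement that every edge of $[g,\abs{\mQ}]\cap[h,\abs{\mQ}]$ is incident to a small vertex $v$ with $g^{-1}h\in G_v$, and that two such edges cannot meet at a big vertex (by the computation above), every piece lies in a single $\abs{\mQ_{\geq v}}$ and hence has length at most $2$; this is precisely the paper's proof.
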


For every small cancellation complex $X$ there is an associated dual simplicial complex $W(X)$ called the \emph{Wise complex}. It is shown by Wise in the classical setting \cite{wisesix} and by the author and Osajda in the graphical setting \cite{OsaPry} that if $X$ is simply connected and satisfies the $C(k)$--condition then $W(X)$ is $k$--systolic. The following corollary is straightforward.

\begin{corollary}[Corollary~\ref{cor:wisecomplex}]
  The group $G$ acts geometrically on a $k$--systolic complex $W(D(G, C(\abs{\mQ})))$. Thus $G$ is a $k$--systolic group. Moreover, complexes $D(G, C(\abs{\mQ}))$ and $W(D(G, C(\abs{\mQ})))$ are $G$--homotopy equivalent. 
\end{corollary}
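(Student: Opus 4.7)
The plan is to invoke the cited Wise-type theorem directly, since the hard work has been done in the preceding Theorem~\ref{thm:gcogissystolic}. First I would record that $X := D(G, C(\abs{\mQ}))$ is simply connected: this is part of the standard properties of the Basic Construction associated to a strictly developable complex of groups, guaranteed by the previous developability result. Combined with the $C(k)$-condition on the natural graphical small cancellation structure on $X$, the result of Wise in \cite{wisesix} and of Osajda and the author in \cite{OsaPry} applies and yields that the Wise complex $W(X)$ is $k$-systolic. This is the ``systolicity'' half of the corollary.

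Next I would transport the $G$-action from $X$ to $W(X)$. Since $W(X)$ is built functorially out of the cell-and-piece data of $X$, every cellular automorphism of $X$ that permutes pieces induces a simplicial automorphism of $W(X)$; and the $G$-action on $X$ preserves the piece structure by construction of the Basic Construction. Properness of the induced $G$-action on $W(X)$ follows because a stabiliser of a vertex of $W(X)$ is contained in the stabiliser of a piece of $X$, and the latter is finite since all local groups $G_v$ are finite. Cocompactness follows from cocompactness of $G \curvearrowright X$ together with the fact that $W(X)$ has only finitely many $G$-orbits of pieces, hence of simplices. Thus $G$ acts geometrically on $W(X)$, so $G$ is $k$-systolic.

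The $G$-homotopy equivalence $X \simeq_G W(X)$ is the most delicate point, but it is covered by the construction in \cite{OsaPry}. The natural approach is a nerve argument: one covers $X$ by the $G$-invariant family of subcomplexes (pieces together with their incidences) whose nerve is, by definition, the Wise complex $W(X)$; the $C(k)$-condition with $k \geq 6$ ensures that all non-empty finite intersections in this cover are contractible, so the equivariant nerve theorem produces a $G$-equivariant homotopy equivalence onto $W(X)$. The main obstacle here is verifying contractibility of intersections in the graphical small cancellation setting, which is precisely what is handled in \cite{OsaPry}; once this input is in hand, equivariance is automatic because the cover itself is $G$-invariant, making the corollary indeed straightforward.
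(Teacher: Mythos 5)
Your proposal follows essentially the same route as the paper: the paper's proof is simply to combine Theorem~\ref{thm:gcogissystolic} (the $C(k)$--condition on $D(G,C(\abs{\mQ}))$) with the cited result of \cite{OsaPry} (Theorem~\ref{thm:graphicalissystolic}), which already supplies the $k$--systolicity of the Wise complex, the induced $G$--action with the same properness/cocompactness, and the $G$--homotopy equivalence, with simple connectivity of the Basic Construction coming from its $\czero$ metric. Your additional verifications of properness, cocompactness, and the nerve argument are exactly the content delegated to that citation, so the argument is correct and matches the paper's.
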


This allows us to use systolic techniques to further study $G$. One immediate consequence of being systolic is that $G$ is biautomatic. Notice that in general, biautomaticity is not known even for $2$--dimensional, piecewise Euclidean $\czero$ groups.

\begin{corollary}[Corollary~\ref{cor:biautomatic}]  
  The group $G$ is biautomatic.
\end{corollary}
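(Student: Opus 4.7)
The plan is to deduce this immediately from the preceding corollary by invoking the biautomaticity theorem for systolic groups. By Corollary~\ref{cor:wisecomplex}, the group $G$ acts properly and cocompactly on the $k$--systolic complex $W(D(G, C(\abs{\mQ})))$ with $k \geq 6$. Since $6$--systolic complexes are systolic in the sense of Januszkiewicz--\'Swi\k{a}tkowski, this puts $G$ in the class of groups to which their main biautomaticity result applies.

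The key step is therefore to cite the theorem of Januszkiewicz and \'Swi\k{a}tkowski (from their foundational paper on simplicial nonpositive curvature) which states that any group acting properly and cocompactly by simplicial automorphisms on a systolic simplicial complex is biautomatic. I would verify that the action of $G$ on $W(D(G, C(\abs{\mQ})))$ is simplicial and geometric; properness and cocompactness follow because the $G$--homotopy equivalence of Corollary~\ref{cor:wisecomplex} comes from a cellular $G$--map and $D(G, C(\abs{\mQ}))$ itself is acted upon properly and cocompactly by $G$ (as guaranteed by the Basic Construction machinery used in Theorem~\ref{thm:developable}).

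There is essentially no obstacle here beyond collecting the hypotheses correctly, since the hard analytic work of producing a biautomatic structure on a systolic group is done by the Januszkiewicz--\'Swi\k{a}tkowski theorem. The statement is a direct consequence, so the proof should be a single short paragraph consisting of the citation plus the verification that the action meets the required hypotheses.
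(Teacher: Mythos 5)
Your proposal is correct and is exactly the paper's argument: Corollary~\ref{cor:biautomatic} is deduced by citing the Januszkiewicz--\'Swi\k{a}tkowski biautomaticity theorem (\cite[Theorem~E]{JS2}), applied to the geometric action of $G$ on the $k$--systolic (hence systolic, since $k\geq 6$) complex $W(D(G, C(\abs{\mQ})))$ furnished by Corollary~\ref{cor:wisecomplex}. No further comment is needed.
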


The interplay between graphical small cancellation and systolic structures enables us to isolate a condition on a simply connected $C(6)$ graphical small cancellation complex that implies hyperbolicity of any group acting on it geometrically. (It is known that a group acting geometrically on a simply connected $C(7)$ graphical small cancellation complex is hyperbolic \cite{OsaPry}.) Recall that a graphical complex is built out of simplicial cones over finite graphs.
Three such cones form a \emph{proper triple} if their triple intersection is non-empty, and if it is a proper subset of every double intersection.
 
\begin{theorem}[Theorem~\ref{thm:notripleshyperbolicity} and Proposition~\ref{prop:piecewisehyperbolic}]\label{thm:intro.notripleshyperbolicity}
  Let $G(\mQ)$ be a $6$--huge graphical complex of groups and let $G$ be its fundamental group. If $D(G, C(\abs{\mQ}))$ does not contain a proper triple then $G$ is hyperbolic. Moreover, $D(G, C(\abs{\mQ}))$ admits a piecewise hyperbolic $\mathrm{CAT}(-1)$ metric.
\end{theorem}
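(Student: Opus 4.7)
The plan is to reduce both conclusions to constructing a $G$--invariant piecewise hyperbolic $\mathrm{CAT}(-1)$ metric on $D(G, C(\abs{\mQ}))$. A group acting properly and cocompactly on a $\mathrm{CAT}(-1)$ space is Gromov hyperbolic, so the existence of such a metric yields both statements at once.

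For the metric, I would replace each Euclidean $30$--$60$--$90$ triangle from Theorem~\ref{thm:developable} with a hyperbolic triangle whose angles $(\alpha, \beta, \gamma)$ at the apex, small, and big vertices satisfy $\alpha + \beta + \gamma < \pi$. The triangles are congruent and $G$ permutes them, so the metric is automatically $G$--invariant, and by Gromov's link condition the curvature bound $\leq -1$ is equivalent to every vertex link having systole at least $2\pi$. The task therefore becomes a combinatorial analysis of the three types of vertex link.

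At an apex vertex only the unique cone with that apex contributes, so the link is a single copy of $\abs{\mQ}$; the $6$--hugeness hypothesis gives girth $\geq 12$, so $\alpha = \pi/6$ suffices. At a small or big vertex the link in a single cone is a star, and hence every short cycle must arise from several cones meeting at the vertex. A bipartite $6$--cycle at a small vertex corresponds to three cones that pairwise share a big neighbor above the small vertex while meeting only at that small vertex, which is exactly a proper triple configuration. Ruling out proper triples therefore raises the systole at small-vertex links from $6\beta$ (girth $6$) to at least $8\beta$ (girth $\geq 8$), so $\beta = \pi/4$ suffices. At a big vertex a bipartite $4$--cycle involves only two cones sharing two small neighbors, is not affected by the no proper triple assumption, and forces $\gamma \geq \pi/2$.

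With these bounds one can take $(\alpha, \beta, \gamma) = (\pi/6, \pi/4, \pi/2)$, giving an angle sum of $11\pi/12 < \pi$ and hence a genuine hyperbolic triangle, while preserving the link condition at every vertex type (apex: $12 \cdot \pi/6 = 2\pi$; small: $8 \cdot \pi/4 = 2\pi$; big: $4 \cdot \pi/2 = 2\pi$). The main technical obstacle is the combinatorial verification that short cross-cone cycles in small-vertex links are accounted for \emph{exclusively} by proper triples, which requires a careful classification of how several cones can simultaneously contain a common small vertex together with enough big neighbors to close a cycle. Once this classification is in hand the piecewise hyperbolic metric has every link $\mathrm{CAT}(1)$ and is therefore $\mathrm{CAT}(-1)$, which delivers both the hyperbolicity of $G$ and the \emph{Moreover} statement.
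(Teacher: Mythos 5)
Your overall strategy---construct a $G$--invariant piecewise hyperbolic metric satisfying the Gromov link condition and deduce both the $\mathrm{CAT}(-1)$ statement and hyperbolicity at once---is legitimate, and it is in fact how the paper proves the ``Moreover'' part (the paper's main route to hyperbolicity is different: it goes through the $C(6)$ graphical small cancellation structure, the systolic Wise complex, and the exclusion of isometrically embedded cut-up tetrahedra). However, your combinatorial analysis of the links is wrong in a way that breaks the construction: you have attached the effect of the ``no proper triple'' hypothesis to the wrong type of vertex. The link of a small vertex $v$ in $D(G,C(\abs{\mQ}))$ is the join $\abs{\mQ_{>v}} \ast \{g\cdot c \mid g \in G_v\}$, i.e.\ a complete bipartite graph between the big neighbours of $v$ and the cone vertices of the $\abs{G_v}$ cone-cells containing the star of $v$. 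Since $G_v$ is non-trivial and $v$ has at least two big neighbours, this link \emph{always} contains $4$--cycles $(g_1\cdot c, w_1, g_2\cdot c, w_2)$: two cone-cells whose indexing elements differ by an element of $G_v$ share the whole of $\abs{\mQ_{\geq v}}$, hence both big neighbours $w_1,w_2$. These $4$--cycles have nothing to do with proper triples, so the girth of a small-vertex link is $4$, not $6$ or $8$, and your claim that excluding proper triples raises it to $8$ is false. With your angle $\beta=\pi/4$ at small vertices the systole of every small-vertex link is $4\cdot\pi/4=\pi<2\pi$, so the link condition fails and the metric is not $\mathrm{CAT}(-1)$.

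The situation is exactly the reverse of what you describe. At a \emph{big} vertex $w$ the link is $D(G_w,\abs{\mQ_{<w}})$, a bipartite graph alternating between cone vertices and small vertices; $4$--cycles are excluded there by condition (2) of the definition of a graphical complex of groups ($\psi_{v_1w}(G_{v_1})\cap\psi_{v_2w}(G_{v_2})=\{e\}$), and a $6$--cycle corresponds precisely to three cone-cells meeting at $[g,w]$ and pairwise intersecting in stars of three distinct small vertices---that is, to a proper triple. So it is the big-vertex links whose girth the hypothesis raises from $6$ to at least $8$, which is why the paper takes the hyperbolic triangle with angles $\pi/2$ at the small vertex, $\pi/4$ at the big vertex, and $\pi/6$ at the cone vertex (angle sum $11\pi/12<\pi$), verifying the systole bound $4\cdot\pi/2=2\pi$, $8\cdot\pi/4=2\pi$, and $12\cdot\pi/6=2\pi$ respectively. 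Your proof would be correct after swapping your small- and big-vertex angles and redoing the link analysis accordingly; as written, the ``main technical obstacle'' you defer (that short cross-cone cycles in small-vertex links come only from proper triples) is not merely unproved but false.
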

We remark that proper triples inside $D(G, C(\abs{\mQ}))$ have a simple, easily checkable form (see Lemma~\ref{lem:structureofpropertriple}).

In Section~\ref{sec:examples} we give several examples of graphical complexes of groups. We begin with familiar examples of right-angled Coxeter groups associated to finite graphs, Coxeter groups with $1$--dimensional nerves, and \emph{graphical products of groups}, which are certain graph products of groups that admit a structure of graphical complexes of groups. Using Theorem~\ref{thm:intro.notripleshyperbolicity}, we show that all of these examples have hyperbolic fundamental groups.

We then construct some examples with non-hyperbolic fundamental groups. Let us remark that these examples are genuine graphical complexes of groups, in the sense that both their local groups and the poset structure are more complicated than, e.g., right-angled Coxeter groups, where one considers a poset of simplices of a graph, and where structure maps are inclusions of direct factors into the product. As a byproduct of our construction, we obtain a description of flats in the Basic Construction $D(G, C(\abs{\mQ}))$ for a $6$--huge graphical complex of groups $G(\mQ)$.

In Section~\ref{sec:c4t4} we briefly outline a parallel theory of $C(4)-T(4)$ graphical complexes of groups. It turns out that the condition of not containing proper triples may be seen as a form of a $T(4)$--condition. Combined with the $C(4)$--condition, it allows one to construct non-hyperbolic examples. Most of the theory translates verbatim from the $C(6)$ case, and thus fundamental groups of $C(4)-T(4)$ complexes enjoy the same properties as those of $C(6)$ complexes. The exception is biautomaticity, which follows from the Helly property of $C(4)-T(4)$ small cancellation complexes \cite{Helly}.

We conclude the article with Section~\ref{sec:commques} where we pose some questions regarding the more advanced group theoretic and geometric properties of fundamental groups of graphical complexes of groups.

\subsection*{Acknowledgments} I would like to thank Damian Osajda and Jacek {\'S}wi{\c{a}}tkowski for helpful discussions. I thank Aleksander Pedersen Prytu{\l}a for his assistance during this work. I also thank the Max Planck Institute for Mathematics where part of the work was completed. I was supported by the European Union’s Horizon 2020 research and innovation programme under the Marie Sk{\l}odowska-Curie grant agreement No 713683.

\section{Preliminaries}

\subsection{Systolic simplicial complexes}

Let $X$ be a finite dimensional, uniformly locally finite simplicial complex. We equip $X$ with a CW--topology. Let $X^{(n)}$ denote the $n$--skeleton of $X$. For vertices $v_1, \ldots, v_n \in X^{(0)}$ let $[v_1, \ldots v_n]$ denote the simplex of $X$ spanned by these vertices. We say that $X$ is \emph{flag} if every set of vertices in $X^{(0)}$ pairwise connected by edges spans a simplex of $X$. A map of simplicial complexes $f \colon X \to Y$ is \emph{simplicial} if $f(X^{(0)}) \subset Y^{(0)}$ and whenever vertices $v_1, \ldots v_n$ span a simplex of $X$, then so do $f(v_1), \ldots, f(v_n)$ and $f$ maps $[v_1, \ldots v_n]$ linearly onto $[f(v_1), \ldots f(v_n)]$. A simplicial map is \emph{non-degenerate} if it is injective on each open simplex. An \emph{immersion} is a simplicial map that is locally injective.

A \emph{path} (resp.\ \emph{cycle}) in $X$ is a non-degenerate simplicial map from a subdivided interval (resp.\ circle) into $X$. A path or cycle is \emph{embedded} if the respective map from an interval or circle is injective. In this case we can identify a path or a cycle with its image in $X$. The \emph{length} of a path or a cycle is the number of edges in its domain. A \emph{diagonal} of an embedded cycle $C$ is an edge connecting two non-consecutive vertices of $C$. We equip $X^{(0)}$ with a metric such that $d(v,w)$ is the length of the shortest path joining $v$ and $w$. We call this metric the \emph{edge-path} metric.

A \emph{simplicial join} of simplicial complexes $X$ and $Y$ is a simplicial complex $X \ast Y$ with vertex set $X^{(0)} \sqcup Y^{(0)}$ and simplices $[v_0,\ldots, v_n, w_0, \ldots, w_m]$ where $[v_0,\ldots, v_n]$ is a simplex of $X$ and $[w_0,\ldots, w_m]$ is a simplex of $Y$.

The $\emph{link}$ of a vertex $v \in X^{(0)}$ is a subcomplex $X_v \subset X$ which consists of all simplices that are disjoint from $v$ but that together with $v$ span a simplex of $X$. The \emph{star} of $v$ is a subcomplex of $X$ that consists of all simplices that contain $v$.  Note that the star of $v$ in $X$ is naturally isomorphic with the join $X_v \ast \{v\}$. A \emph{valence} of a vertex $v$ is the number of edges in $X$ that are incident to $v$.

\begin{definition}\label{def:klarge}
  Let $k \geq 4$ be an integer. A simplicial complex $X$ is \emph{$k$--large} if it is flag and if every embedded cycle of length less than $k$ in $X$ has a diagonal.
\end{definition}

\begin{remark}
  Observe that if $k \geq l$ then `$k$--large' implies `$l$--large'.
\end{remark}

\begin{definition}
  A simplicial complex $X$ is \emph{$k$--systolic} if it is simply connected and the link of every vertex of $X$ is $k$--large. We abbreviate $6$--systolic to \emph{systolic}.

  A group $G$ is \emph{$k$--systolic} if it acts geometrically (i.e.,\ properly, cocompactly, and by simplicial automorphisms) on a $k$--systolic complex.
\end{definition}

We refer the reader to \cite{JS2} for a background on systolic complexes and groups.

\begin{definition}
  A \emph{graph} is a $1$--dimensional simplicial complex. (Such graphs are often called `simplicial graphs'.) The \emph{girth} of a graph is the length of its shortest cycle. If a graph does not contain any cycles, we declare its girth to be $\infty$.
\end{definition}

\begin{remark}
  A graph is $k$--large if and only if its girth is at least $k$.
\end{remark}

\subsection{Graphical small cancellation theory}

Intuitively speaking, graphical small cancellation theory studies complexes which are built out of cones over finite graphs, glued together along subgraphs of these graphs.
The small cancellation condition applies only to paths which are contained in two different such cones. Inside a single cone, there can be arbitrarily large cancellations.

Recall that a \emph{cone} over a graph $\Gamma$ is a space $C(\Gamma)$ given by \[ C(\Gamma) =\Gamma \times [0,1]/ (x,1) \sim (y,1).\] Notice that $C(\Gamma)$ has a natural structure of a simplicial complex which is a simplicial join of $\Gamma$ and the cone vertex. Moreover, $\Gamma$ is a subcomplex of $ C(\Gamma)$.

\begin{definition}[Graphical complex]\label{def:grcomplex}
  Let $\Theta$ be a graph and let $\Gamma=\bigsqcup_{i\in I} \Gamma_i$ be a disjoint union of countably many finite graphs. Consider an immersion $\phi= \sqcup \phi_i \colon \Gamma \to \Theta$. A \emph{graphical complex} $X$ is obtained by for every $i \in I$ gluing a cone $C(\Gamma_i)$ to $\Theta$ along the map $\phi_i$, i.e.,\ 
  \begin{equation*}
    X = \Theta \cup_{\phi} \bigsqcup_{i \in I} C(\Gamma_i).
  \end{equation*} 
\end{definition}

An image of $C(\Gamma_i)$ in $X$ is called a \emph{cone-cell}. Notice that $X$ has a natural structure of a simplicial complex. However, intuitively one can think of cone-cells as `$2$--cells' of $X$ and of $\Theta$ as the `$1$--skeleton' of $X$.

\begin{definition} 
  A group $G$ acts on a graphical complex $X$ if it acts simplicially on $\Theta$ and maps cone-cells isomorphically onto cone-cells.
\end{definition}

\begin{definition}[Piece]
  An immersed path $P \to \Theta$ is a \emph{piece} if there are two factorisations of $P$ as follows
  \begin{equation*}
    \begin{tikzcd}
              P \arrow{r} \arrow{d} & \Gamma_i \arrow{d} \\
      \Gamma_j \arrow{ru} \arrow{r} &  \Theta, 
    \end{tikzcd}
  \end{equation*}
  and there does not exist an isomorphism $\Gamma_j \to \Gamma_i$ making the above diagram commute.
\end{definition}

\begin{definition}[$C(k)$--condition]
  Let $k \geq 2$ be an integer. A graphical complex $X$ satisfies the \emph{$C(k)$--condition} if no embedded cycle $C \to \Theta$ which factors through some $\Gamma_i \to \Theta$ is a concatenation of less than $k$ pieces.
\end{definition}

\begin{definition}[Wise complex]\label{def:wisecomplex}
  Let $X$ be a graphical complex such that $\phi \colon \Gamma \to \Theta$ is surjective. Define simplicial complex $W(X)$ as the nerve of the covering of $X$ by its cone-cells, i.e., we have 
  \begin{equation*} 
    \begin{split}
      W(X)^{(0)} & = \{C(\Gamma_i)\}_{i \in I}, \\
      W(X)^{(n)} & = \{ (C(\Gamma_0),\ldots, C(\Gamma_n)) \mid \Gamma_0 \cap \ldots \cap \Gamma_n \neq \emptyset\}.
    \end{split}
  \end{equation*}
\end{definition}

\begin{theorem}\cite[Theorem~F, Theorem~7.9, and Theorem~7.10]{OsaPry}\label{thm:graphicalissystolic}
  Let $X$ be simply connected graphical complex and let $k \geq 6$ be an integer. If $X$ satisfies the $C(k)$--condition then $W(X)$ is $k$--systolic.

  If $G$ acts on $X$ then $G$ acts on $W(X)$, and $X$ and $W(X)$ are $G$--homotopy equivalent. One action is proper and/or cocompact if and only if the other is so.
\end{theorem}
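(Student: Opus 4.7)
The plan is to establish the two conclusions of the theorem separately. For $k$-systolicity of $W(X)$, I need both simple connectivity and $k$-largeness of every vertex link. Simple connectivity follows from a nerve-type argument: since $X$ is simply connected and covered by contractible cone-cells, any combinatorial loop in $W(X)$ can be lifted to a loop in $X$; this loop bounds a van Kampen disk, and the induced cover of the disk by cone-cells yields a simplicial filling of the original loop in $W(X)$.

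The main work is verifying that the link of each cone-cell vertex $C(\Gamma_i)$ is $k$-large. Vertices of $W(X)_{C(\Gamma_i)}$ are cone-cells $C(\Gamma_j)$ with $j \neq i$ and $\Gamma_j \cap \Gamma_i \neq \emptyset$ in $\Theta$, and a set spans a simplex precisely when the corresponding graphs share a common point with $\Gamma_i$. Flagness follows from the observation that the intersection pattern is determined by common vertices inside $\Gamma_i$, which is one-dimensional. To rule out short diagonal-free cycles, suppose $C(\Gamma_{j_1}), \ldots, C(\Gamma_{j_n})$ is an embedded cycle of length $n < k$ in the link without a diagonal. For each consecutive pair I would select $p_l \in \Gamma_i \cap \Gamma_{j_l} \cap \Gamma_{j_{l+1}}$ and then shortest paths $P_l \subset \Gamma_i \cap \Gamma_{j_l}$ joining $p_{l-1}$ to $p_l$. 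Each $P_l$ factors through both $\Gamma_{j_l} \to \Theta$ and $\Gamma_i \to \Theta$ with $j_l \neq i$, so it qualifies as a piece.

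The main obstacle is turning the closed concatenation $P_1 \cdots P_n \subset \Gamma_i$ into an embedded cycle that directly contradicts the $C(k)$-condition. The no-diagonal hypothesis prevents non-adjacent pieces from sharing points in $\Gamma_i$, since a shared point would give a simultaneous intersection of $C(\Gamma_i)$, $C(\Gamma_{j_l})$, and $C(\Gamma_{j_{l'}})$ and hence a diagonal in the link. Any remaining self-intersection stems from adjacent pieces only and can be eliminated by a minimality argument on the choice of $P_l$, possibly shrinking $n$. The resulting embedded cycle in $\Theta$ factors through $\Gamma_i$ and is a concatenation of at most $n < k$ pieces, contradicting $C(k)$.

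For the equivariant statement, a simplicial $G$-action on $X$ permutes cone-cells by definition, hence descends to a simplicial action on the nerve $W(X)$. Vertex stabilizers in $W(X)$ agree with cone-cell stabilizers in $X$, so properness transfers, and cocompactness follows from the bijection between $G$-orbits of cone-cells in $X$ and $G$-orbits of vertices in $W(X)$. For the $G$-homotopy equivalence I would build a $G$-equivariant mapping cylinder (or homotopy colimit) of the inclusion diagram of intersections of cone-cells, and invoke the equivariant nerve theorem to conclude that the canonical projections to $X$ and to $W(X)$ are both $G$-homotopy equivalences.
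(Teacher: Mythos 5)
This statement is not proved in the paper at all: it is quoted verbatim from \cite[Theorem~F, Theorems~7.9 and 7.10]{OsaPry}, so there is no internal proof to compare against, and your outline has to stand on its own against the argument in that reference. Your overall strategy is the right one and matches the cited proof in outline: show that the link of a vertex $C(\Gamma_i)$ in $W(X)$ is $k$--large by converting a short diagonal-free cycle of cone-cells around $C(\Gamma_i)$ into an embedded cycle in $\Gamma_i$ built from fewer than $k$ pieces, and transport the group action and the homotopy type through the nerve of the cover by cone-cells. The observation that a point shared by two non-consecutive cone-cells of the cycle inside $\Gamma_i$ produces a diagonal, and the minimality argument for adjacent pieces, are both sound.

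The genuine gap is that you use, without proof, exactly the structural facts about simply connected $C(k)$ graphical complexes that form the technical heart of \cite[Sections~6--7]{OsaPry}. Concretely: (i) your paths $P_l$ must lie in $\Gamma_i \cap \Gamma_{j_l}$ and join $p_{l-1}$ to $p_l$, which requires the intersection of two cone-cells to be connected (in fact a tree); this fails for general graphical complexes and is precisely what is established there via reduced disc diagrams, using simple connectivity of $X$ together with $C(6)$. (ii) Your justification of flagness of the link (``the intersection pattern is determined by common vertices inside $\Gamma_i$, which is one-dimensional'') is not an argument: flagness is a Helly-type statement for the subgraphs $\Gamma_i \cap \Gamma_j$ of $\Gamma_i$, and it again needs their connectedness plus a $C(k)$ argument excluding cycles in unions of pieces. (iii) The equivariant nerve theorem you invoke for the $G$--homotopy equivalence requires all non-empty multiple intersections $\Gamma_{i_0} \cap \cdots \cap \Gamma_{i_n}$ to be contractible, i.e.\ trees, which is the same unproved structural input; likewise, when you pass from your closed concatenation in $\Gamma_i$ to an embedded cycle in $\Theta$ you are implicitly using that $\Gamma_i \to X$ is an embedding (\cite[Lemma~6.10]{OsaPry}). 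Without these disc-diagram lemmas the proposal does not close; with them supplied, it essentially reproduces the cited proof.
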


\section{Graphical complexes of groups}

Let $\mQ$ be a poset. We say that $\mQ$ is $1$--dimensional if its geometric realisation $\abs{\mQ}$ is a graph. We say that $\mQ$ is connected if $\abs{\mQ}$ is connected. From now on we assume that $\mQ$ is finite, $1$--dimensional, and connected. We will abuse terminology and call elements of $\mQ$ \emph{vertices}. Note that $\mc Q$ has two types of vertices: \emph{big} and \emph{small}, defined in the obvious way. An example of such a poset $\mQ$ is shown in Figure~\ref{fig:1dimposet}.

Let $C(\mQ)$ denote a poset obtained by adding to $\mQ$ an additional element $c$ and declaring $c \leq v$ for every $v \in \mQ$. We call $c$ the \emph{cone vertex} of $C(\mQ)$. Note that we have $\abs{C(\mQ)} \cong C(\abs{\mQ})$ and the inclusion of posets $ \mQ \subset C(\mQ)$ induces the canonical inclusion of the graph $\abs{Q}$ into the cone $C(\abs{\mQ})$. 

We introduce the main object of our study. 

\begin{definition}[Graphical complex of groups]\label{def:gcog}
  Suppose that $\mQ$ is a finite $1$--dimensional poset. A \emph{graphical complex of groups} $G(\mQ)$ consists of 
  \begin{itemize}
    \item a collection of non-trivial finite groups $\{G_v\}_{v \in \mQ}$, called the \emph{local groups}, 
    \item a collection of maps $\psi_{vw} \colon G_v \to G_w$ for every pair $v \leq w$,
  \end{itemize} satisfying the following conditions:
  \begin{enumerate}
    \item every $\psi_{vw} \colon G_v \to G_w$ is a proper inclusion,
    \item if $v_1 \leq w$ and $v_2 \leq w$ then $\psi_{v_1w}(G_{v_1}) \cap \psi_{v_2w}(G_{v_2})= \{e\}.$
  \end{enumerate}

  \noindent \textlabel{$(\ast)$}{ast:coneconvention} Additionally, as a part of the definition, one considers the poset $C(\mQ)$ with $\mQ$ being its subposet. To the cone vertex $c \in C(\mQ)$ one associates the trivial group as the local group, and to every pair $c \leq v$ with $v \in \mQ$ one associates the inclusion of the trivial group into $G_v$.
\end{definition}

\begin{remark} 
  The above definition is a special case of a \emph{simple complex of groups} as defined in \cite{BH}. In our case many standard constructions for simple complexes of groups get substantially simplified, and therefore we present them in a way adjusted to our setting (in particular, we use a different order convention than in \cite{BH}, where map $\psi_{vw}$ maps $G_w$ to $G_v$). 

  We remark further that part~\hyperref[ast:coneconvention]{$(\ast)$} of Definition~\ref{def:gcog} does not carry any essential information, and is added in order to make the material in this section consistent with the literature on simple complexes of groups (modulo the inconsistent order convention described above). 
\end{remark}

\begin{definition}
  Define the \emph{fundamental group} of $G(\mQ)$ as \[G= \underset{\longrightarrow}{\mathrm{lim}}\{G_v, \psi_{vw} \}.\]In other words, $G$ is isomorphic to the free product of groups $G_v$ for $v \in \mQ$, divided by the relations $\psi_{vw}(G_v)=G_w$ for each pair $v \leq w$ in $\mQ$.
\end{definition}

\begin{definition} 
  We say that $G(\mQ)$ is \emph{strictly developable} if for every $v \in \mQ$ the canonical map $G_v \to G= \underset{\longrightarrow}{\mathrm{lim}}\{G_v, \psi_{vw} \}$ is injective. Note that in this case one can identify group $G_v$ with its image in $G$.
\end{definition}

Observe that if $\mQ$ contains a vertex $v$ for which there is a unique $w$ with $v \leq w$, then removing from $G(\mQ)$ the vertex $v$ and the local group $G_v$ does not affect whether or not $G(\mQ)$ is developable, nor does it change its fundamental group. Moreover, if $\mQ$ has a disconnecting vertex (i.e.,\ a vertex whose removal will result in a non-connected poset), then the fundamental group $G(\mQ)$ may be written as a non-trivial amalgamated product over a finite group. 

\begin{convention}\label{conv:connected}
  In the light of the above discussion, with a little loss of generality, we will assume from now on that $\mQ$ has no vertices of valence one and no disconnecting vertices.

  Furthermore, we will assume that $\mQ$ contains more than one vertex. Note that, together with the first assumption, this implies that $\abs{\mQ}$ contains at least one cycle.
\end{convention}

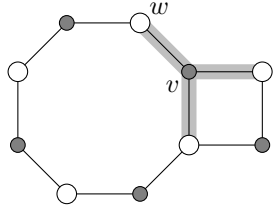
\begin{figure}[!h]
\centering
\begin{tikzpicture}[scale=0.65]

\begin{scope}

\draw [gray!50, line width=2mm] (4,2.5)--(2.5,2.5);
\draw [gray!50, line width=2mm] (2.5,1)--(2.5,2.5)--(1.5,3.5);

\draw (0,0)--(1.5,0)--(2.5,1)--(2.5,2.5)--(1.5,3.5)--(0,3.5)--(-1,2.5)--(-1,1)--(0,0);


\draw (2.5,1)--(4,1)--(4,2.5)--(2.5,2.5);

\draw[fill=white] (0,0) circle [radius=0.2];
\draw[fill=white] (2.5,1) circle [radius=0.2];
\draw[fill=white] (1.5,3.5) circle [radius=0.2];
\draw[fill=white] (-1,2.5) circle [radius=0.2];
\draw[fill=white] (4,2.5) circle [radius=0.2];

\draw[fill=gray] (1.5,0) circle [radius=0.15];
\draw[fill=gray] (2.5,2.5) circle [radius=0.15];
\draw[fill=gray] (0,3.5 ) circle [radius=0.15];
\draw[fill=gray] (-1,1) circle [radius=0.15];
\draw[fill=gray] (4,1) circle [radius=0.15];



\node [below left] at (2.5,2.5) {$v$}; 
\node [above right] at (1.5,3.5) {$w$};

\end{scope}

\end{tikzpicture}
\caption{Geometric realisation of a $1$--dimensional poset $\mQ$.  Big vertices are white, small vertices are dark gray. Subgraph $\abs{\mQ_{\geq v}}$ is light gray. We have $w \in \abs{\mQ_{\geq v}}$.}  
\label{fig:1dimposet}
\end{figure} 

\begin{definition}
  For a vertex $v \in \mQ$ define $\mQ_{\geq v}$ (resp.\ $\mQ_{> v}$) to be the subposet of $\mQ$ consisting of all vertices greater than or equal to (resp.\ strictly greater than) $v$. Subposets $\mQ_{\leq v}$ and $\mQ_{< v}$ are defined analogously.
\end{definition}

Observe that if $v \in \mQ$ is a small vertex then $\abs{\mQ_{\geq v}}$ consists of all edges of $\abs{\mQ}$ which are incident to $v$. If $w \in \mQ$ is a big vertex then $\abs{\mQ_{\geq w}} =\{w\}.$ Note that if $v \leq w$ then clearly $w \in \abs{\mQ_{\geq v}}$ (see Figure~\ref{fig:1dimposet}).  

\begin{definition}[Basic Construction]\label{def:basicconstruction}
  Let $G(\mQ)$ be a graphical complex of groups. Define the \emph{Basic Construction} as \[D(G, C(\abs{\mQ}))= G \times C(\abs{\mQ}) / \sim\] where $(g_1, x_1) \sim (g_2, x_2)$ if and only if $x_1=x_2$ and
  \begin{itemize}
    \item $g_1^{-1}g_2 \in G_w$ if $x_1=w$ for some big vertex $w \in \abs{\mQ}$,
    \item $g_1^{-1}g_2 \in G_v$ if $x_1 \in \abs{\mQ_{\geq v}}$ for some small vertex $v \in \abs{\mQ}$.
  \end{itemize}
  Let $[g, x]$ denote the equivalence class of $(g,x)$.
\end{definition}

Observe that $D(G, C(\abs{\mQ}))$ has a natural structure of a simplicial complex. The group $G$ acts on $D(G, C(\abs{\mQ}))$ by $g_1 \cdot [g,x] = [g_1g,x]$. The stabilisers of this action are $G$--conjugates of local groups and the quotient is isomorphic to $[e,C(\abs{\mQ})] \cong C(\abs{\mQ})$. Since the quotient may be seen as a subcomplex of $D(G, C(\abs{\mQ}))$ we call it a \emph{strict fundamental domain}.

\subsection{Graphical structure of the Basic Construction}\label{subsec:graphicalstructure}
Note that $D(G, C(\abs{\mQ}))$ has a natural structure of a graphical complex. The `$1$--skeleton' is given by 
\[D(G, \abs{\mQ})= \{[g,x] \in D(G, C(\abs{\mQ})) \mid x \in \abs{\mQ}\}\]  (note that the equivalence relation $\sim$ does not identify any points which are not contained in $\abs{\mQ})$. The cone-cells are given by $\{g\} \times C(\abs{\mQ})$ attached along the map $\{g\} \times \abs{\mQ} \to  [g,\abs{\mQ}] \subset D(G, \abs{\mQ}).$ Clearly the $G$--action on $D(G,  C(\abs{\mQ}))$ preserves the graphical structure.

\subsection{Three types of vertices of the Basic Construction}\label{subsec:threetypes}

The simplicial complex $D(G,  C(\abs{\mQ}))$ has three types of vertices: small, big, and \emph{cone} vertices (i.e.,\ cone vertices of cones $[g,C(\abs{\mQ})]$ for $g \in G$). This is because the equivalence relation $\sim$ on $G \times C(\abs{\mQ})$ preserves the type of vertices of $C(\abs{\mQ})$. Moreover, every $2$--simplex of $D(G,  C(\abs{\mQ}))$ has exactly one vertex of each type. It is easy to see that the $G$--action on $D(G, C(\abs{\mQ}))$ preserves the type of vertices.

\subsection{k--huge graphical complexes of groups}

In this section we define $k$--huge posets and $k$--huge graphical complexes of groups, whose study occupies the remainder of the article.

\begin{definition}[$k$--hugeness]
Let $\mQ$ be a finite $1$--dimensional poset and let $k \geq 2$ be an integer. We say that $\mQ$ is \emph{$k$--huge} if every embedded cycle in $\abs{\mQ}$ contains at least $k$ small (equivalently, big) vertices. Note that this happens if and only if $\abs{\mQ}$ is $2k$--large in the sense of Definition~\ref{def:klarge} (i.e., the girth of $\abs{\mQ}$ is at least $2k$). We say that $G(\mQ)$ is $k$--huge if $\mQ$ is $k$--huge.
\end{definition}

\begin{remark}
  There is a notion of a \emph{$k$--large complex of groups}, originally defined in \cite{JS2}. We remark that this notion is not equivalent to our definition of $k$--hugeness ($=2k$--largeness). However, these two definitions are similar, as they both give conditions ensuring strict developability and non-positive curvature-like features of the Basic Construction.
\end{remark}

\section{Developability and Tits Alternative}

\subsection{Strict developability}

\begin{theorem}\label{thm:developable} 
  Let $G(\mQ)$ be a $k$--huge graphical complex of groups, where $k \geq 6$. Then $C(\abs{\mQ})$ admits a piecewise linear metric such that $G(\mQ)$ is non-positively curved. Consequently, $G(\mQ)$ is strictly developable and $D(G, C(\abs{\mQ}))$ admits a $G$--invariant $\czero$ metric.
\end{theorem}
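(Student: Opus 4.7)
The plan is to invoke the general Bridson--Haefliger theory of nonpositively curved complexes of groups: if we can put a piecewise Euclidean metric on the strict fundamental domain $C(\abs{\mQ})$ such that all links in the resulting local development are CAT$(1)$, then $G(\mQ)$ is strictly developable and $D(G,C(\abs{\mQ}))$ is CAT$(0)$. All the work thus lies in choosing the metric on the triangles of $C(\abs{\mQ})$ and checking link conditions.

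I would metrize every $2$--simplex $[c,v,w]$ of $C(\abs{\mQ})$ (where $c$ is the cone vertex, $v$ is small, $w$ is big with $v\leq w$) as a Euclidean right triangle with angles $\pi/6$ at $c$, $\pi/3$ at $w$, $\pi/2$ at $v$, and shortest side of length $1$. By Convention~\ref{conv:connected} and Subsection~\ref{subsec:threetypes}, $D(G,C(\abs{\mQ}))$ has three types of vertices; each one needs to be handled separately. For a cone vertex $[g,c]$, the geometric link is $\abs{\mQ}$ with every edge of length $\pi/6$: the $k$--hugeness hypothesis gives girth at least $2k\geq 12$, so every embedded loop has length at least $2\pi$. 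For a small vertex $[e,v]$, a direct computation using Definition~\ref{def:basicconstruction} shows that the link is a complete bipartite graph $K_{a,b}$, where one side has $|G_v|$ cone vertices and the other has one big vertex per element of $\mQ_{>v}$ (since $G_v=\psi_{vw}(G_v)\subseteq G_w$ collapses all big-vertex copies); with edge length $\pi/2$, every $4$--cycle has length exactly $2\pi$ and we are done (both sides have at least $2$ elements thanks to Convention~\ref{conv:connected} and nontriviality of local groups).

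The main obstacle is the link at a big vertex $[e,w]$, which requires the genuine graphical hypothesis. This link is a bipartite graph with $|G_w|$ cone vertices $[g,c]$ (for $g\in G_w$) on one side, and, on the other side, $[G_w:\psi_{vw}(G_v)]$ small vertices for each $v\leq w$; the edges come from the triangles $[g,[c,v,w]]$, connecting $[g,c]$ to the coset $[g\psi_{vw}(G_v),v]$. I would argue that this link has girth at least $6$: suppose an embedded $4$--cycle $[g_1,c]-[h_1,v]-[g_2,c]-[h_2,v']-[g_1,c]$ exists. Tracing the defining coset relations along the cycle shows $g_1^{-1}g_2\in \psi_{vw}(G_v)\cap \psi_{v'w}(G_{v'})$; if $v=v'$ the two small-side vertices coincide and the cycle is not embedded, while if $v\neq v'$ the intersection is trivial by condition~(2) of Definition~\ref{def:gcog}, forcing $g_1=g_2$. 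Thus the girth is at least $6$, and with edges of length $\pi/3$ every essential loop has length at least $2\pi$, giving CAT$(1)$.

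With all three link conditions verified, $G(\mQ)$ is nonpositively curved in the sense of Bridson--Haefliger, so it is strictly developable. The Basic Construction $D(G,C(\abs{\mQ}))$ inherits a piecewise Euclidean metric which is locally CAT$(0)$ by the link calculations and simply connected by general properties of the Basic Construction; by the Cartan--Hadamard theorem it is globally CAT$(0)$. The fact that the arithmetic $\tfrac{1}{6}+\tfrac{1}{3}+\tfrac{1}{2}=1$ of the angles of the chosen triangle matches the bound $\tfrac{1}{k}+\tfrac{1}{3}+\tfrac{1}{2}\leq 1$ exactly at $k=6$ explains the threshold in the hypothesis.
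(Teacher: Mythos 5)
Your proposal is correct and follows essentially the same route as the paper: the identical piecewise Euclidean metric on $C(\abs{\mQ})$ (angles $\frac{\pi}{2}$, $\frac{\pi}{3}$, $\frac{\pi}{6}$), the same three link/local-development computations at cone, small, and big vertices (with the big-vertex $4$--cycle ruled out via condition~(2) of Definition~\ref{def:gcog}), and the same appeal to the Bridson--Haefliger theory to pass from non-positive curvature to strict developability and a $G$--invariant $\czero$ metric on $D(G, C(\abs{\mQ}))$.
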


Since the $G$--action on $D(G, C(\abs{\mQ}))$ is by construction proper and cocompact, we obtain that $G$ is a $\czero$ group. For a definition and some properties of $\czero$ spaces and groups we refer the reader to \cite{BH}.

Before proving Theorem~\ref{thm:developable} we present the following corollary. Let $\underline{E}G$ denote the classifying space for proper actions for $G$ and let $\underline{\mathrm{gd}}G$ denote the proper geometric dimension of $G$, i.e.,\ the minimal dimension of a model for $\underline{E}G$ (see \cite{PePry} for some background on these notions).

\begin{corollary}\label{cor:bredondim}
  The space $D(G, C(\abs{\mQ}))$ is a cocompact model for $\underline{E}G$ and we have $\underline{\mathrm{gd}}G = 2$. In particular, $G$ is not virtually free.  
\end{corollary}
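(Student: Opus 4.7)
The plan is to bound $\underline{\mathrm{gd}}G$ from above by $2$, by exhibiting $D := D(G, C(\abs{\mQ}))$ as a two-dimensional cocompact model for $\underline{E}G$, and from below by $2$, by showing that $G$ is not virtually free.

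For the upper bound, note first that by construction the $G$-action on $D$ is proper and cocompact, with cell stabilisers equal to $G$-conjugates of the finite local groups. Theorem~\ref{thm:developable} equips $D$ with a $G$-invariant $\czero$ metric, so for every finite subgroup $H \le G$ the Bruhat--Tits fixed-point theorem yields $D^H \neq \emptyset$, and $D^H$ is convex in $D$, hence contractible. Thus $D$ is a model for $\underline{E}G$, and since $\dim D = 2$ we simultaneously obtain the first assertion of the corollary and the inequality $\underline{\mathrm{gd}}G \le 2$. For the matching lower bound, I would invoke the classical theorem of Dunwoody (building on Stallings's theorem on ends of groups) that $\underline{\mathrm{gd}}G \le 1$ holds if and only if $G$ is virtually free. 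This reduces both the lower bound and the final ``in particular'' statement to showing that $G$ is not virtually free.

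Non-virtual freeness is the main obstacle, and uses Convention~\ref{conv:connected} to produce an embedded cycle $\gamma \subset \abs{\mQ}$. Assuming $G$ were virtually free, I would pass to a torsion-free (hence free) finite-index subgroup $F \le G$; such an $F$ exists because the $\czero$ group $G$ has only finitely many conjugacy classes of finite subgroups. Then $F$ acts freely and cocompactly on the contractible $\czero$ complex $D$, so $Y := F \backslash D$ is a compact two-dimensional $K(F,1)$, and $\mathrm{cd}(F) \le 1$ would force $H_2(Y;\mathbb{Z}) = 0$. To contradict this, I would exploit the rigid simplicial structure of $D$ recorded in Section~\ref{subsec:threetypes} -- every $2$-simplex of $D$ has exactly one cone vertex, one big vertex and one small vertex -- together with the fact that cone vertices carry trivial stabiliser. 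Using $\gamma$ and the cone discs in $D$ that cap off its various $F$-translates at distinct cone vertices, a cellular chain-level argument should produce a non-trivial class in $H_2(Y;\mathbb{Z})$ and hence the desired contradiction.
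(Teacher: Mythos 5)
Your upper bound is the same as the paper's (the Bruhat--Tits fixed point theorem makes $D:=D(G,C(\abs{\mQ}))$ a $2$--dimensional cocompact model for $\underline{E}G$), but your lower bound runs in the opposite logical direction from the paper, and that is where the gap lies. The paper proves $\underline{\mathrm{gd}}G\geq 2$ directly, by a Bredon-cohomological obstruction: the cone vertex $c$ has trivial stabiliser, $C(\mQ)_{>c}=\mQ$, and $\widetilde{H}^1(\abs{\mQ},\mathbb{Z})\neq 0$ by Convention~\ref{conv:connected}, so \cite[Proposition~3.6]{PePry} applies; non-virtual-freeness is then a \emph{consequence} (a virtually free group acts properly on a tree, giving a $1$--dimensional model). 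You instead reduce the lower bound to ``$G$ is not virtually free'' via Stallings--Dunwoody/Karrass--Pietrowski--Solitar, which is a legitimate reduction for the finitely generated group $G$, but it leaves the hardest part of the corollary to be proved independently, and your argument for it does not work.

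Concretely: assuming $G$ virtually free, a free finite-index subgroup $F$ exists by definition (your justification via ``finitely many conjugacy classes of finite subgroups'' is both unnecessary and incorrect -- that condition does not yield a torsion-free finite-index subgroup; indeed virtual torsion-freeness of $G$ is posed as an open question in Section~\ref{sec:commques}). The fatal step is the plan to contradict $H_2(Y;\mathbb{Z})=0$ for $Y=F\backslash D$ by exhibiting a cellular $2$--cycle built from $\gamma$ and cone discs. Ordinary $H_2$ of the compact quotient is the wrong invariant: a compact aspherical $2$--complex with free fundamental group is perfectly possible (e.g.\ $\mathbb{Z}$ acting on $\mathbb{R}\times[0,1]$), so nothing in the setup forces $H_2(Y)\neq 0$, and your proposed construction cannot produce a non-zero cycle. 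Since $D$ is contractible and $2$--dimensional it has no non-zero $2$--cycles at all, and in the quotient the image of an embedded cycle $\gamma\subset\abs{\mQ}$ (of length at least $12$) is capped by exactly one cone-disc image: distinct cone-cells meet in sets of diameter at most $2$, so $\gamma$ lies in a unique cone-cell, and the free $F$--action preserves the graphical structure. The obstruction really lives in $H^2(F;\mathbb{Z}F)\cong H^2_c(D;\mathbb{Z})$ (equivalently in Bredon cohomology), whose non-vanishing comes from the trivially-stabilised cone vertices whose links are copies of $\abs{\mQ}$ with $\widetilde{H}^1\neq 0$ -- exactly the content of the paper's citation of \cite[Proposition~3.6]{PePry}. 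Without an argument of this cohomology-with-supports type, your proof of non-virtual-freeness, and hence of $\underline{\mathrm{gd}}G\geq 2$, is incomplete.
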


\begin{proof}
  The first assertion follows from the Fixed Point Theorem for $\czero$ spaces \cite[Corollary~II.2.8]{BH}. To show the second assertion, first observe that since $D(G, C(\abs{\mQ}))$ is a $2$--dimensional model for $\underline{E}G$, we have $\underline{\mathrm{gd}}G \leq 2$. On the other hand, since $C(\mQ)_{>c}=\mQ$ and $\widetilde{H}^1(\abs{\mQ}, \mathbb{Z}) \neq 0$ (see Convention~\ref{conv:connected}) we obtain by \cite[Proposition~3.6]{PePry} that $\underline{\mathrm{gd}}G \geq 2$.
\end{proof}

In order to prove Theorem~\ref{thm:developable}, we metrise complex $C(\abs{\mQ})$.

\begin{definition}\label{def:metriconthecone}
  To every triangle ($2$--simplex) in $C(\abs{\mQ})$ we assign the unique Euclidean metric such that:
  \begin{enumerate}
    \item the angle at the small vertex is $\frac{\pi}{2}$,
    \item the angle at the big vertex is $\frac{\pi}{3}$,
     \item the angle at the cone vertex is $\frac{\pi}{6}$,  
     \item the length of the shortest edge is equal to $1$.
  \end{enumerate}
  By Subsection~\ref{subsec:threetypes}, this gives a well-defined metric on $C(\abs{\mQ})$. Note that, when restricted to $\abs{\mQ}$, this metric agrees with the edge-path metric. An example of $C(\abs{\mQ})$ equipped with this metric is presented in Figure~\ref{fig:metric}.
\end{definition}

It remains to show that $G(\mQ)$ with $C(\abs{\mQ})$ metrised as above is non-positively curved in the sense of \cite{BH}. We need the following definition.

\begin{definition}\label{def:angularmetric}
  Let $X$ be a $2$--dimensional simplicial complex equipped with a piecewise Euclidean metric. Then for any vertex $v \in X$ its link $X_v$ carries the angular metric, where the length of an edge $[v_1,v_2]$ is the angle at $v$ in the triangle $[v, v_1, v_2]$.
\end{definition}

\begin{definition}\label{def:linkcondition}
  A $2$--dimensional simplicial complex $X$ equipped with a piecewise Euclidean metric is locally $\mathrm{CAT}(0)$ if for every vertex $v  \in X$ the link $X_v$ has girth at least $2\pi$ with respect to the angular metric.
\end{definition}

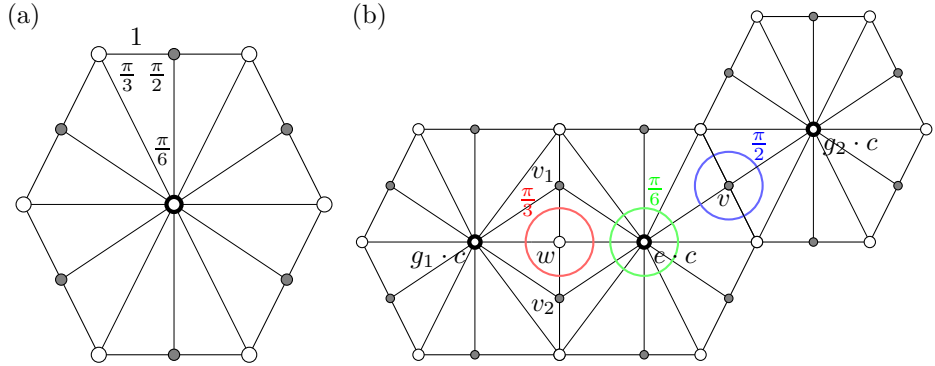
\begin{figure}[!h]
\centering
\begin{tikzpicture}[scale=1]

\node    at (-1,4.5)   {(a)};
\node    at (3.6,4.5)   {(b)};

\begin{scope}

\draw (0,0)--(2,0)--(3,2)--(2,4)--(0,4)--(-1,2)--(0,0);

\draw[ultra thin] (0,0)--(1,2);

\draw[ultra thin] (1,2)--(2,0);

\draw[ultra thin] (1,2)--(3,2);

\draw[ultra thin] (1,2)--(2,4);

\draw[ultra thin] (1,2)--(0,4);

\draw[ultra thin] (1,2)--(-1,2);

\draw[ultra thin] (1,2)--(1,0);

\draw[ultra thin] (1,2)--(2.5,1);

\draw[ultra thin] (1,2)--(2.5,3);

\draw[ultra thin] (1,2)--(1,4);

\draw[ultra thin] (1,2)--(-0.5,3);

\draw[ultra thin] (1,2)--(-0.5,1);


\draw[ultra thick,fill=white] (1,2) circle [radius=0.1];

\draw[fill=white] (0,0) circle [radius=0.1];
\draw[fill=white] (2,0) circle [radius=0.1];
\draw[fill=white] (3,2) circle [radius=0.1];
\draw[fill=white] (2,4) circle [radius=0.1];
\draw[fill=white] (0,4) circle [radius=0.1];
\draw[fill=white] (-1,2) circle [radius=0.1];

\draw[fill=gray] (1,0) circle [radius=0.075];
\draw[fill=gray] (2.5,1) circle [radius=0.075];
\draw[fill=gray] (2.5,3) circle [radius=0.075];
\draw[fill=gray] (1,4) circle [radius=0.075];
\draw[fill=gray] (-0.5,3) circle [radius=0.075];
\draw[fill=gray] (-0.5,1) circle [radius=0.075];

\node [below left]   at (1.025,4)   {$\frac{\pi}{2}$};

\node [above left]   at (1.1,2.4)   {$\frac{\pi}{6}$};
\node [below right]   at (0.125,4)   {$\frac{\pi}{3}$};
\node [above]   at (0.5,4)   {$1$};



\end{scope}

\begin{scope}[shift={(6.5,0)}, scale=0.75]

\begin{scope}[shift={(-3,0)}]

\draw (0,0)--(2.5,0)--(2.5,4)--(0,4)--(-1,2)--(0,0);

\draw[ultra thin] (0,0)--(1,2);

\draw[ultra thin] (1,2)--(2.5,0);

\draw[ultra thin] (1,2)--(2,2);

\draw[ultra thin] (1,2)--(2.5,4);

\draw[ultra thin] (1,2)--(0,4);

\draw[ultra thin] (1,2)--(-1,2);

\draw[ultra thin] (1,2)--(1,0);

\draw[ultra thin] (1,2)--(2.5,1);

\draw[ultra thin] (1,2)--(2.5,3);

\draw[ultra thin] (1,2)--(1,4);

\draw[ultra thin] (1,2)--(-0.5,3);

\draw[ultra thin] (1,2)--(-0.5,1);


\draw[ultra thick,fill=white] (1,2) circle [radius=0.1];

\draw[fill=white] (0,0) circle [radius=0.1];
\draw[fill=white] (2.5,0) circle [radius=0.1];
\draw[fill=white] (3,2) circle [radius=0.1];
\draw[fill=white] (2.5,4) circle [radius=0.1];
\draw[fill=white] (0,4) circle [radius=0.1];
\draw[fill=white] (-1,2) circle [radius=0.1];

\draw[fill=gray] (1,0) circle [radius=0.075];
\draw[fill=gray] (2.5,1) circle [radius=0.075];
\draw[fill=gray] (2.5,3) circle [radius=0.075];
\draw[fill=gray] (1,4) circle [radius=0.075];
\draw[fill=gray] (-0.5,3) circle [radius=0.075];
\draw[fill=gray] (-0.5,1) circle [radius=0.075];


\node [below left]   at (1,2)   {$g_1 \cdot c$};

\end{scope}

\begin{scope}

\draw[fill=white] (0,0)--(2,0)--(3,2)--(2,4)--(-0.5,4)--(-0.5,3)--(-0.5,1)--(-0.5,0)--(0,0);



\draw[ultra thin] (-0.5,0)--(1,2);

\draw[ultra thin] (1,2)--(2,0);

\draw[ultra thin] (1,2)--(3,2);

\draw[ultra thin] (1,2)--(2,4);

\draw[ultra thin] (1,2)--(-0.5,4);

\draw[ultra thin] (1,2)--(-1,2);

\draw[ultra thin] (1,2)--(1,0);

\draw[ultra thin] (1,2)--(2.5,1);

\draw[ultra thin] (1,2)--(2.5,3);

\draw[ultra thin] (1,2)--(1,4);

\draw[ultra thin] (1,2)--(-0.5,3);

\draw[ultra thin] (1,2)--(-0.5,1);


\draw[ultra thick,fill=white] (1,2) circle [radius=0.1];

\draw[fill=white] (-0.5,0) circle [radius=0.1];
\draw[fill=white] (2,0) circle [radius=0.1];
\draw[fill=white] (3,2) circle [radius=0.1];
\draw[fill=white] (2,4) circle [radius=0.1];
\draw[fill=white] (-0.5,4) circle [radius=0.1];
\draw[fill=white] (-0.5,2) circle [radius=0.1];

\draw[fill=gray] (1,0) circle [radius=0.075];
\draw[fill=gray] (2.5,1) circle [radius=0.075];
\draw[fill=gray] (2.5,3) circle [radius=0.075];
\draw[fill=gray] (1,4) circle [radius=0.075];
\draw[fill=gray] (-0.5,3) circle [radius=0.075];
\draw[fill=gray] (-0.5,1) circle [radius=0.075];



\draw[thick,green!60] (1,2) circle [radius=0.6];

\node [below right]   at (1,2)   {$e \cdot c$};

\draw[thick,blue!60] (2.5,3) circle [radius=0.6];

\node [below left]   at (2.7,3)   {$v
$};

\draw[thick,red!60] (-0.5,2) circle [radius=0.6];

\node [below left]   at (-0.4,2)   {$w
$};

\node [above left]   at (-0.4,3-0.1)   {$v_1
$};
\node [below left]   at (-0.4,1.15)   {$v_2
$};

\node [above right]   at (-1.4,2.3)   {$\textcolor{red}{\frac{\pi}{3}
}$};

\node [above right]   at (0.85,2.5)   {$\textcolor{green}{\frac{\pi}{6}
}$};

\node [above right]   at (2.7,3.3)   {$\textcolor{blue}{\frac{\pi}{2}
}$};

\end{scope}

\begin{scope}[shift={(3,2)}]

\draw (0,0)--(2,0)--(3,2)--(2,4)--(0,4)--(-1,2)--(0,0);

\draw[ultra thin] (0,0)--(1,2);

\draw[ultra thin] (1,2)--(2,0);

\draw[ultra thin] (1,2)--(3,2);

\draw[ultra thin] (1,2)--(2,4);

\draw[ultra thin] (1,2)--(0,4);

\draw[ultra thin] (1,2)--(-1,2);

\draw[ultra thin] (1,2)--(1,0);

\draw[ultra thin] (1,2)--(2.5,1);

\draw[ultra thin] (1,2)--(2.5,3);

\draw[ultra thin] (1,2)--(1,4);

\draw[ultra thin] (1,2)--(-0.5,3);

\draw[ultra thin] (1,2)--(-0.5,1);


\draw[ultra thick,fill=white] (1,2) circle [radius=0.1];

\draw[fill=white] (0,0) circle [radius=0.1];
\draw[fill=white] (2,0) circle [radius=0.1];
\draw[fill=white] (3,2) circle [radius=0.1];
\draw[fill=white] (2,4) circle [radius=0.1];
\draw[fill=white] (0,4) circle [radius=0.1];
\draw[fill=white] (-1,2) circle [radius=0.1];

\draw[fill=gray] (1,0) circle [radius=0.075];
\draw[fill=gray] (2.5,1) circle [radius=0.075];
\draw[fill=gray] (2.5,3) circle [radius=0.075];
\draw[fill=gray] (1,4) circle [radius=0.075];
\draw[fill=gray] (-0.5,3) circle [radius=0.075];
\draw[fill=gray] (-0.5,1) circle [radius=0.075];



\node [below right]   at (1,2)   {$g_2 \cdot c$};

\end{scope}

\end{scope}

\end{tikzpicture}
\caption{(a) Piecewise linear metric on $C(\abs{\mQ})$, where $\mQ$ is a poset of simplices of a $6$--cycle. (b) Local developments at vertices $c$, $v$ and $w$. At vertices $c$ and $v$ we  have legal situations, where both green and blue cycles have angular length $2\pi$. At vertex $w$ we have an illegal situation: the red cycle has angular length $ \frac{4\pi}{3}$. In this situation we have $g_1 \in \psi_{v_1w}(G_{v_1})\cap  \psi_{v_2w}(G_{v_2})$.}  
\label{fig:metric}
\end{figure}

\begin{proof}[Proof of Theorem~\ref{thm:developable}]
  In order to show that $G(\mQ)$ is non-positively curved in the sense of \cite{BH} we need to show that local developments at vertices of $C(\abs{\mQ})$ are locally $\czero$. We refer the reader to \cite[II.12.24]{BH} for a definition of local development. For a vertex $v \in C(\abs{\mQ})$ let $\mathrm{St}(\tilde{v})$ denote its closed star in the local development, and let $\mathrm{Lk}(\tilde{v})$ denote its link in $\mathrm{St}(\tilde{v})$. Recall that we have $\mathrm{St}(\tilde{v}) \cong \mathrm{Lk}(\tilde{v}) \ast \tilde{v}$. Examples of local developments are shown in Figure~\ref{fig:metric}.

  \begin{enumerate}
    \item{Local development at cone vertex $c \in C(\abs{\mQ})$.} We have \[\mathrm{St}(\tilde{c})\cong C(\abs{\mQ})\] and \[\mathrm{Lk}(\tilde{c}) \cong \abs{\mQ}.\]  
    Every edge in $\abs{\mQ}$ has angular length $\frac{\pi}{6}$. Since $\mQ$ is $k$--huge with $k \geq 6 $ we get that any embedded cycle consists of at least $2k$ edges and thus the girth of  $\abs{\mQ}$ with respect to the angular metric is at least \[2k \cdot \frac{\pi}{6} \geq 12 \cdot \frac{\pi}{6} \geq 2\pi.\] 

    \item{Local development at small vertex $v \in C(\abs{\mQ})$.} We have \[\mathrm{St}(\tilde{v}) \cong \abs{Q_{\geq v}}  \ast \{g\cdot c \mid  g \in G_v\}\] 
    and \[\mathrm{Lk}(\tilde{v}) \cong \abs{Q_{> v}}  \ast \{g\cdot c \mid  g \in G_v\}.\] The embedded cycles  are given by $(g_1 \cdot c, w_1, g_2 \cdot c, w_2)$ with $v \leq w_1$ and $v \leq w_2$. Since  all edges have  angular length $\frac{\pi}{2}$ we get that the girth of $\mathrm{Lk}(\tilde{v})$ is equal to \[4 \cdot \frac{\pi}{2}= 2\pi.\]

    \item{Local development at big vertex $w \in C(\abs{\mQ})$.} We have \[\mathrm{St}(\tilde{w}) \cong \{w\}  \ast D(G_w, \abs{Q_{<w}} )\] where $D(G_w, \abs{Q_{<w}})  = G_w \times C(\{v_1, \ldots, v_k\}) /\sim$ such that $v_1, \ldots, v_k$ are small vertices in $\abs{Q_{<w}}$ and $(g_1, v_1) \sim  (g_2, v_2)$ if and only if $v_1=v_2$ and $g_1^{-1}g_2 \in G_{v_1} \leq G_w$. Consequently, we  have \[\mathrm{Lk}(\tilde{w}) \cong D(G_w, \abs{Q_{<w}}).\] One easily verifies that every embedded cycle in $\mathrm{Lk}(\tilde{w})$ has even length. Since every edge has angular length $\frac{\pi}{3}$ we have to exclude the existence of a $4$--cycle (there are no $2$--cycles because by definition $\mathrm{Lk}(\tilde{w})$ is simplicial). A putative $4$--cycle has a form \[([g_1, c],\ [g_1, v_1]=[g_2,v_1],\ [g_2, c],\ [g_2, v_2]=[g_1,v_2]).\] We have $(g_1, v_1) \sim (g_2, v_1)$ so $g_1^{-1}g_2 \in G_{v_1}=\psi_{v_1w}(G_{v_1})$ and $(g_1, v_2) \sim (g_2, v_2)$ so $g_1^{-1}g_2 \in G_{v_2}=\psi_{v_2w}(G_{v_2})$ which contradicts the assumption of Definition~\ref{def:gcog} that  $\psi_{v_1w}(G_{v_1})\cap \psi_{v_2w}(G_{v_2}) = \{e\}$. 
  \end{enumerate}

  This shows that $G(\mQ)$ is non-positively curved and thus it is strictly developable. The metric on $C(\abs{\mQ})$ described in Definition~\ref{def:metriconthecone} induces a metric on $D(G, C(\abs{\mQ}))$ given by declaring every cone $[g, C(\abs{\mQ})]$ isometric to $[e,C(\abs{\mQ}))] \cong  C(\abs{\mQ})$ via the multiplication by $g$. Note that by construction this metric is $G$--invariant. Since $G(\mQ)$ is non-positively curved, this metric is $\czero$.
\end{proof}

\begin{remark}\label{rem:linksandstars} 
  By the above construction, for any vertex $[g,v] \in D(G, C(\abs{\mQ}))$, the star (resp.\ link) of $[g,v]$ in $D(G, C(\abs{\mQ}))$ is isomorphic (and isometric) to the star $\mathrm{St}(\tilde{v})$ (resp.\ link $\mathrm{Lk}(\tilde{v})$) in the local development.
\end{remark}

\subsection{Geodesic completeness}

For a metric space $X$, a \emph{geodesic} is an isometric embedding $[a,b] \to X$, where $[a,b] \subseteq \mathbb{R}$ is an interval (we allow $a,b = \infty$).
We say that $X$ is \emph{geodesically complete} if every geodesic $[a,b] \to X$ can be extended to a geodesic line $\mathbb{R} \to X$.

\begin{proposition}
  Let $G(\mQ)$ be a $k$--huge graphical complex of groups, where $k \geq 6$ and let $G$ be the fundamental group of $G(\mQ)$. Then $D(G, C(\abs{\mQ}))$ with its $\czero$ metric is geodesically complete. 
\end{proposition}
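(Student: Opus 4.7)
The plan is to apply the standard local criterion for geodesic completeness of a $\czero$ piecewise Euclidean complex with finitely many isometry types of cells (cf.\ \cite[II.5]{BH}): it suffices that at every point $p$ of $D := D(G, C(\abs{\mQ}))$, every direction in the link $\Sigma_p$ admits an antipode at angular distance $\pi$. This is automatic in the interior of $2$--simplices, so only interior points of edges and vertices need to be checked.

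At an interior point of an edge $e$ of $D$, the link is a graph with two vertices (the directions along $e$) joined by $n$ arcs of length $\pi$, where $n$ is the number of $2$--simplices of $D$ containing $e$; antipodes exist precisely when $n \geq 2$. For a big--small edge in $D(G, \abs{\mQ})$ with small endpoint $v$, the containing triangles correspond to the $|G_v|$ cones $[g, C(\abs{\mQ})]$ that contain the edge (these are parameterised by a single coset of $G_v$ in $G$), so $n = |G_v| \geq 2$ by nontriviality of local groups. For a cone--small or cone--big edge inside a single cone $[g, C(\abs{\mQ})]$, the count is $n = |\mQ_{>v}|$ or $|\mQ_{<w}|$ respectively, both $\geq 2$ by Convention~\ref{conv:connected}.

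At a vertex of $D$, Remark~\ref{rem:linksandstars} identifies the link with the corresponding link in the local development computed in the proof of Theorem~\ref{thm:developable}. In each of the three cases this link is a metric graph of girth at least $2\pi$ in which every vertex has valence $\geq 2$: for the cone vertex, $\mathrm{Lk}(\tilde{c}) \cong \abs{\mQ}$ has valences $|\mQ_{>v}|, |\mQ_{<w}| \geq 2$ by Convention~\ref{conv:connected}; for a small vertex, $\mathrm{Lk}(\tilde{v}) \cong \abs{\mQ_{>v}} \ast \{g \cdot c \mid g \in G_v\}$ is complete bipartite on parts of size $\geq 2$; for a big vertex, $\mathrm{Lk}(\tilde{w}) \cong D(G_w, \abs{\mQ_{<w}})$ is bipartite with valences $|\mQ_{<w}| \geq 2$ and $[G_w : G_{v_i}] \geq 2$ (the latter since $\psi_{v_i w}$ is a proper inclusion). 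In such a graph the valence condition allows any direction at any point to be extended to a locally geodesic path of length $\pi$, and the girth condition forces this path to be embedded (otherwise it would contain a cycle of length $\leq \pi < 2\pi$); the extension is thus a global geodesic and its endpoint supplies the required antipode.

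The main substantive step is the triangle count at big--small edges in $D$, which is not covered by the link analysis in Theorem~\ref{thm:developable} and requires tracking cosets of the small local groups inside $G$; the vertex case amounts to re-reading data already assembled there.
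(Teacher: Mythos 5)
Your proof is correct and in substance the same as the paper's. The paper invokes \cite[Proposition~5.10]{BH} (for a non-positively curved piecewise Euclidean complex with finitely many shapes, geodesic completeness is equivalent to having no free faces) and then checks that every vertex of every vertex link has valence at least $2$; your edge-interior computation (each edge lies in $n\geq 2$ triangles, with $n=|G_v|$ for a big--small edge and $n=\abs{\mQ_{>v}}$ resp.\ $\abs{\mQ_{<w}}$ for a cone edge) is exactly that no-free-face check, and your vertex case repeats the paper's valence count in the three local developments. The only real difference is that instead of citing the free-face criterion you re-derive the extension property by hand, producing antipodes in links via the girth bound $\geq 2\pi$ from Theorem~\ref{thm:developable}; that is fine, though your phrase ``embedded, hence a global geodesic'' compresses a step: embeddedness alone does not make the length-$\pi$ non-backtracking path a geodesic, you need the girth bound once more to exclude a shorter path between its endpoints. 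One bookkeeping slip to correct: in $\mathrm{Lk}(\tilde{w})\cong D(G_w,\abs{\mQ_{<w}})$ the valence of a vertex $[g,v_i]$ is $|G_{v_i}|$, since it is joined exactly to the cone vertices $(g',c)$ with $g'\in g\,\psi_{v_iw}(G_{v_i})$; the index $[G_w:G_{v_i}]$ instead counts how many vertices of type $v_i$ the link has. The conclusion is unaffected because $|G_{v_i}|\geq 2$ by non-triviality of the local groups, and note that this same coset count is what governs your big--small edges, so that step is in fact already contained in the paper's link analysis rather than being extra.
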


\begin{proof}
  Since $D(G, C(\abs{\mQ}))$ is a piecewise Euclidean simplicial $2$--complex, being geodesically complete is equivalent to having no \emph{free faces} i.e., simplices which are contained in exactly one higher dimensional simplex \cite[Proposition~5.10]{BH}. We will show that in vertex links, every vertex has valence at least $2$. One easily verifies that this implies that there are no free faces in $D(G, C(\abs{\mQ}))$. By the construction of metric on $D(G, C(\abs{\mQ}))$, vertex links are isomorphic to the local developments described in the proof of Theorem~\ref{thm:developable}  (see Remark~\ref{rem:linksandstars}).
  \begin{enumerate}

    \item For a cone vertex we have $\mathrm{Lk}(\tilde{c}) \cong \abs{\mQ}$. By Convention~\ref{conv:connected} we have that $\abs{\mQ}$ does not contain any vertices of valence $0$ or $1$.

    \item For a small vertex we have $\mathrm{Lk}(\tilde{v}) \cong \abs{Q_{> v}}  \ast \{g\cdot c \mid  g \in G_v\}$. Since both $\abs{Q_{> v}}$ and  $\{g\cdot c \mid  g \in G_v\}$ contain at least two elements each (by Convention~\ref{conv:connected} and Definition~\ref{def:gcog} respectively), every vertex of $\abs{Q_{> v}}  \ast \{g\cdot c \mid  g \in G_v\}$ has valence at least $2$.
    
    \item For a big vertex we have $\mathrm{Lk}(\tilde{w}) \cong D(G_w, \abs{Q_{<w}}) = G_w \times C(\{v_1, \ldots, v_k\})/ \sim$. By Convention~\ref{conv:connected} there are at least two vertices in $ C(\{v_1, \ldots, v_k\})$ and thus cone vertices of $G_w \times C(\{v_1, \ldots, v_k\})/ \sim$ have valence at least $2$. 

    Now consider a vertex $[g_i,v_j] \in G_w \times C(\{v_1, \ldots, v_k\})/ \sim.$ Coset $g_iG_j$ contains at least two elements, and so let $g_i'\in g_iG_j$ be the element different from $g_i$. We clearly have $[g_i,v_j]=[g_i',v_j]$ and thus vertex $[g_i,v_j]$ is incident to edges $[ [g_i, c], [g_i,v_j]]$ and  $[ [g_i', c], [g_i',v_j]]$.\qedhere

  \end{enumerate}
\end{proof}

\begin{corollary}
  Let $G(\mQ)$ be a $k$--huge graphical complex of groups, where $k \geq 6$ and let $G$ be the fundamental group of $G(\mQ)$. Then $D(G, C(\abs{\mQ}))$ has infinite diameter, and thus $G$ is infinite. 
\end{corollary}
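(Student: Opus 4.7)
The plan is to deduce both assertions from the previous proposition, which establishes geodesic completeness. First I would observe that $D(G, C(\abs{\mQ}))$ is non-trivial in the sense that it contains at least one non-degenerate geodesic segment: by Convention~\ref{conv:connected}, $\abs{\mQ}$ contains at least one cycle and in particular an edge, so $C(\abs{\mQ})$ contains a $2$--simplex. Pick any two distinct points $x \neq y$ in such a simplex inside the strict fundamental domain $[e, C(\abs{\mQ})] \subset D(G, C(\abs{\mQ}))$, and let $\gamma \colon [0,d] \to D(G, C(\abs{\mQ}))$ be the unique $\czero$ geodesic joining them (given by Theorem~\ref{thm:developable}).

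Next I would invoke the previous proposition to extend $\gamma$ to a geodesic line $\bar{\gamma} \colon \mathbb{R} \to D(G, C(\abs{\mQ}))$. Since a geodesic line is by definition an isometric embedding of $\mathbb{R}$, its image has infinite diameter, and hence so does the ambient space $D(G, C(\abs{\mQ}))$.

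Finally, to derive that $G$ is infinite, I would use cocompactness of the action. The quotient $D(G, C(\abs{\mQ})) / G$ is isomorphic to the strict fundamental domain $C(\abs{\mQ})$, which is a compact simplicial complex and therefore has finite diameter $\delta < \infty$. Consequently $D(G, C(\abs{\mQ})) = \bigcup_{g \in G} g \cdot [e, C(\abs{\mQ})]$ is a union of $G$--translates of a set of diameter $\delta$. If $G$ were finite, this would be a finite union of compact sets and hence bounded, contradicting the infinite diameter established above; therefore $G$ must be infinite.

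There is no real obstacle here: the content is entirely packaged in the preceding proposition (geodesic completeness plus the standard fact that a geodesically complete $\czero$ space containing a non-trivial geodesic segment admits a bi-infinite geodesic line). The only thing to verify carefully is that the fundamental domain is genuinely non-trivial so that a starting geodesic segment exists, and this is immediate from our standing Convention~\ref{conv:connected}.
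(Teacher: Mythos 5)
Your proof is correct and follows exactly the route the paper intends: the corollary is stated as an immediate consequence of the geodesic completeness proposition (extend a nontrivial geodesic segment to a line to get infinite diameter, then use cocompactness of the $G$--action on the compact strict fundamental domain to conclude $G$ is infinite). The paper only adds the side remark that the statement also follows from Corollary~\ref{cor:bredondim}, but your argument is the one it has in mind.
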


We remark that the above corollary follows also from Corollary~\ref{cor:bredondim}. However, the argument using geodesic completeness is somehow easier to picture. Moreover, this approach allows us to describe a certain family of elements of $G$ of infinite order, by explicitly constructing their axes in $D(G, C(\abs{\mQ}))$.

\begin{definition}
  Two small vertices $v_1,v_2 \in \mQ$ are called \emph{antipodal} if the edge-path distance $d(v_1,v_2)$ in $\abs{\mQ}$ is at least $6$. (This happens if and only if every path between $v_1$ and $v_2$ in $\abs{\mQ}$ passes through at least $3$ big vertices.)
\end{definition}

\begin{proposition}\label{prop:inforderelements}
  Let $G(\mQ)$ be a $k$--huge graphical complex of groups, where $k \geq 6$ and let $G$ be the fundamental group of $G(\mQ)$. Let $v_1, v_2 \in \abs{\mQ}$ be a pair of antipodal vertices and let $g_1 \in G_{v_1}$ and $g_2 \in G_{v_2}$ be non-trivial elements. Then $g_2g_1$ is of infinite order.
\end{proposition}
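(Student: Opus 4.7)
The plan is to exhibit an axis for $h = g_2 g_1$ in the $\czero$ space $D(G, C(\abs{\mQ}))$. Since $g_1 \in G_{v_1}$ fixes $[e,v_1]$ and $g_2 \in G_{v_2}$ fixes $[e,v_2]$, we have $h \cdot [e,v_1] = [g_2 g_1, v_1] = [g_2, v_1]$, and the natural candidate for one period of the axis is the piecewise linear path
\[
  \gamma\colon\ [e,v_1]\ \longrightarrow\ [e,c]\ \longrightarrow\ [e,v_2] = [g_2, v_2]\ \longrightarrow\ [g_2, c]\ \longrightarrow\ [g_2, v_1],
\]
consisting of four edges of length $\sqrt{3}$ lying in the cones $[e, C(\abs{\mQ})]$ and $[g_2, C(\abs{\mQ})]$. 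Once $\gamma$ is shown to be a geodesic whose $h$--translates concatenate into a geodesic line, $h$ will be a hyperbolic isometry of translation length $4\sqrt{3} > 0$, and hence of infinite order.

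To prove that $\gamma$ is a geodesic, I verify local geodesicity at each interior vertex by computing angles in the link, using Definitions~\ref{def:angularmetric} and~\ref{def:linkcondition} together with the link descriptions from the proof of Theorem~\ref{thm:developable}. At the cone vertices $[e,c]$ and $[g_2,c]$ the link is isomorphic to $\abs{\mQ}$ with every edge of angular length $\frac{\pi}{6}$, so the angle between the $v_1$-- and $v_2$--directions equals the angular length of the shortest path from $v_1$ to $v_2$ in $\abs{\mQ}$, which is $d_{\abs{\mQ}}(v_1,v_2)\cdot \frac{\pi}{6} \geq 6\cdot \frac{\pi}{6} = \pi$ by antipodality.

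At the small vertex $[e,v_2]$ the link is the complete bipartite join $\abs{\mQ_{>v_2}} \ast \{g\cdot c : g \in G_{v_2}\}$ with all edges of angular length $\frac{\pi}{2}$. Via Remark~\ref{rem:linksandstars}, the direction from $[e,v_2]$ toward $[e,c]$ corresponds to the cone vertex $e \cdot c$ and the direction toward $[g_2,c]$ corresponds to $g_2 \cdot c$; since $g_2 \neq e$ in $G_{v_2}$, these are two distinct cone-type vertices of the bipartite join, so their angular distance equals $2 \cdot \frac{\pi}{2} = \pi$, realised by a length--two path through any big vertex in $\abs{\mQ_{>v_2}}$. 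The identical analysis at $[g_2, v_1]$ (with $g_1 \neq e$ in $G_{v_1}$) shows that the angle between the $[g_2,c]$-- and $[g_2 g_1, c]$--directions is also exactly $\pi$.

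Hence $\gamma$ is a local geodesic and therefore a global one by $\czero$ geometry. By $h$--periodicity the same link computations apply at every vertex of the form $h^n \cdot [e,v_1]$, so $\bigcup_{n \in \mathbb{Z}} h^n \cdot \gamma$ is a geodesic line on which $h$ acts by translation of length $4\sqrt{3}$. Consequently $d([e,v_1], h^n \cdot [e,v_1]) = 4\sqrt{3}\, n$ for every $n \geq 0$, so $h^n \neq e$ for $n \neq 0$. The main subtlety is the angle computation at the small-vertex breakpoints: one has to use the non-triviality of $g_1$ and $g_2$ together with the injectivity of local groups (Theorem~\ref{thm:developable}) to conclude that the relevant cone-directions in the local developments are genuinely distinct, ruling out a degenerate zero angle that would allow the broken path to double back on itself.
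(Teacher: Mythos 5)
Your proposal is correct and takes essentially the same approach as the paper: the bi-infinite concatenation of the $h$--translates of your four-edge path coincides with the path $\alpha$ constructed there, and both arguments establish local geodesicity by the same link computations (antipodality giving angle at least $\pi$ at cone vertices, non-triviality of $g_1$ and $g_2$ giving angle exactly $\pi$ at small vertices) before invoking $\mathrm{CAT}(0)$ geometry to conclude that $g_2g_1$ translates along a geodesic line and hence has infinite order.
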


\begin{proof}Let $\gamma$ denote a path in $C(\abs{\mQ})$ formed by edges $[v_1,c]$ and $[c,v_2]$. Let $\tilde{\gamma}$ denote the path $\gamma$ with the reversed orientation. Consider a path $\alpha$ in $D(G, C(\abs{\mQ}))$ given by the concatenation of the following paths
\[ \ldots \ast [g_1^{-1}g_2^{-1}g_1^{-1}g_2^{-1}, \gamma] \ast [g_1^{-1}g_2^{-1}g_1^{-1}, \tilde{\gamma}] \ast [g_1^{-1}g_2^{-1}, \gamma] \ast [g_1^{-1}, \tilde{\gamma}]  \ast \]
\[ \ast [e,\gamma] \ast [g_2, \tilde{\gamma}] \ast [g_2g_1,\gamma] \ast [g_2g_1g_2, \tilde{\gamma}] \ast [g_2g_1g_2g_1, \gamma] \ast \ldots \] 
We will show that $\alpha$ is a geodesic path. This will imply that all elements $(g_2g_1)^{n}$ are different from one another, and thus that $g_2g_1$ is of infinite order. By construction, $g_2g_1$ acts on $\alpha$ as a translation.

To show that $\alpha$ is geodesic it is enough to show that it is a local geodesic. First we show that $\gamma$ is a local geodesic. To see that, observe that since $v_1$ and $v_2$ are antipodal, the   edge-path distance between them in the link of $c$ is at least $6$. Since every edge has length $\frac{\pi}{6}$ with respect to the angular metric, the angular distance is at least $\pi$. This shows that $\gamma$ is a local geodesic. Consequently, so is  $\tilde{\gamma}$ and  any of their $G$--translates. 

It remains to show that $\alpha$ is a local geodesic at the `concatenation points'. At these points $\alpha$ consists of edges $ [[h_1,c], [h_1,v_i]]$  and $[[h_2,v_i], [h_2,c]]$ meeting at vertex $[h_1,v_i]=[h_2,v_i]$ (for a fixed $i \in \{1,2\}$). The edge-path distance between $[h_1,c]$  and $[h_2,c]$ in the link of $[h_1,v_i]=[h_2,v_i]$ is equal to $2$. Since every edge has angular length $\frac{\pi}{2}$, the angular distance between $[h_1,c]$  and $[h_2,c]$ is equal to $\pi$.   
\end{proof}

\subsection{Tits Alternative}

\begin{proposition}[Tits Alternative]\label{prop:titsalternative} 
  Let $G(\mQ)$ be a $k$--huge graphical complex of groups for $k \geq 6$ and let $G$ be its fundamental group. Then any subgroup of $G$ is either finite,  virtually $\mathbb{Z}$, virtually $\mathbb{Z}^2$, or contains a non-abelian free group.
\end{proposition}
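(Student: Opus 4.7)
The plan is to apply the Tits alternative of Osajda and Przytycki~\cite{OsaPrzy} for groups acting on $2$--dimensional \emph{recurrent} $\czero$ complexes, which the introduction explicitly flags as the method. The key virtue of that framework over classical $2$--dimensional $\czero$ Tits alternatives (Ballmann--Brin style) is that its conclusion applies to \emph{all} subgroups, not merely finitely generated ones, which is precisely the strong form asserted in the proposition.

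Setting $X = D(G, C(\abs{\mQ}))$ with the piecewise Euclidean $\czero$ metric of Theorem~\ref{thm:developable}, I would verify the features of $X$ required by the theorem. By Definition~\ref{def:metriconthecone}, $X$ is tiled by a single isometry type of $2$--cell, namely the $\frac{\pi}{2}$-$\frac{\pi}{3}$-$\frac{\pi}{6}$ Euclidean triangle with shortest edge of length $1$. By Subsection~\ref{subsec:threetypes} and Remark~\ref{rem:linksandstars}, there are exactly three isomorphism types of vertex links, corresponding to the three local developments (at cone, small, and big vertices) explicitly computed in the proof of Theorem~\ref{thm:developable}. The $G$--action is proper, cocompact, type-preserving, and has finite stabilisers (the $G$--conjugates of the local groups). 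Cocompactness combined with the single shape of cell furnishes the recurrence hypothesis: every finite pattern in $X$ is isometric to a pattern sitting inside the compact fundamental domain $C(\abs{\mQ})$ and hence, by $G$--translation, recurs in a uniformly dense set of locations throughout $X$.

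With these data in hand, the Osajda--Przytycki theorem yields the four-way classification: any $H \leq G$ is either finite (when $H$ stabilises a point, forcing it into a conjugate of some $G_v$), virtually $\mathbb{Z}$ (when $H$ preserves an axial geodesic line), virtually $\mathbb{Z}^2$ (when $H$ stabilises a flat $\mathbb{E}^2$ in $X$, which is geometrically possible because the $\frac{\pi}{2}$-$\frac{\pi}{3}$-$\frac{\pi}{6}$ triangle tiles the Euclidean plane), or contains a non-abelian free subgroup. The main obstacle is the verification step: one must match our complex precisely to the axioms of a recurrent $2$--complex in~\cite{OsaPrzy}, and in particular rule out pathological subgroup behaviour by exploiting the short list of link types recorded above. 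Once the recurrence is formally in place, the remainder of the argument is a direct invocation of the cited theorem, so the proof is essentially a verification that the black-box hypotheses are met rather than a new geometric argument.
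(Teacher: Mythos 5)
Your overall strategy is the paper's strategy: reduce the statement to the Main Theorem of \cite{OsaPrzy} for groups acting on $2$--dimensional recurrent $\czero$ complexes, with $X=D(G,C(\abs{\mQ}))$ carrying the piecewise Euclidean metric of Theorem~\ref{thm:developable}. The gap is exactly at the point you flag and then leave open: the verification that $X$ is recurrent. Recurrence in \cite{OsaPrzy} is a specific technical condition on the $2$--complex, not the informal assertion that ``every finite pattern recurs in a uniformly dense set of locations,'' and your proposed derivation of it does not work: a finite pattern in $X$ generally spans many translates of the strict fundamental domain and need not be isometric to any configuration inside a single copy of $C(\abs{\mQ})$, and in any case recurrence cannot be a formal consequence of cocompactness plus a single cell shape argued this way --- cocompact actions on $2$--dimensional piecewise Euclidean $\czero$ complexes are precisely the setting where such conclusions (e.g.\ biautomaticity, strong Tits alternatives) are not known in general, which is why \cite{OsaPrzy} isolates recurrence as an extra hypothesis to be checked.

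The paper closes this gap with a concrete criterion: by \cite[Remark~2.3 and Example~2.5]{OsaPrzy} it suffices to produce a simplicial map from $D(G,C(\abs{\mQ}))$ onto a complex $T$ consisting of a single Euclidean triangle with angles $\frac{\pi}{2}$, $\frac{\pi}{3}$, $\frac{\pi}{6}$ and shortest edge $1$, restricting to an isometry on every triangle of $D(G,C(\abs{\mQ}))$. Such a map exists because every $2$--simplex has exactly one vertex of each of the three types (Subsection~\ref{subsec:threetypes}), so sending each vertex to the vertex of $T$ of the same type is well defined and isometric on simplices (Lemma~\ref{lem:recurrent}). Two smaller points: the Main Theorem of \cite{OsaPrzy} yields the four-way classification for \emph{finitely generated} subgroups, and the paper then invokes \cite[Lemma~5.1]{OsaPrzy} to extend it to all subgroups, so the ``all subgroups'' strength is not automatic from the black box; and among the hypotheses one must also check a uniform bound on the orders of finite subgroups, which the paper gets from cocompactness since every finite subgroup is subconjugate to some local group $G_v$. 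With the recurrence argument replaced by the type map and these citations added, your outline becomes the paper's proof.
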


We would like to point out that Proposition~\ref{prop:titsalternative} gives a partial answer to a question of Osajda and Przytycki regarding the Tits Alternative for small cancellation groups \cite[page 4]{OsaPrzy}. Proposition~\ref{prop:titsalternative} will follow from \cite[Main Theorem]{OsaPrzy} which requires the $\czero$ metric on $D(G, C(\abs{\mQ}))$ to be \emph{recurrent}. We refer the reader to \cite{OsaPrzy} for a definition of a recurrent complex. 

\begin{lemma}\label{lem:recurrent}
  The complex $D(G, C(\abs{\mQ}))$ is recurrent with respect to any automorphism group.
\end{lemma}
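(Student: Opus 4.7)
The plan is to verify the recurrence condition for $D = D(G, C(\abs{\mQ}))$ as formulated in \cite{OsaPrzy}. Recurrence is a property asking that certain combinatorial patterns (combinatorial balls or link configurations) repeat throughout the complex in a controlled way. The crucial features I would exploit are the cocompact $G$--action on $D$ with strict fundamental domain $C(\abs{\mQ})$, together with the existence of infinite-order elements supplied by Proposition~\ref{prop:inforderelements}.

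First I would unpack the precise definition from \cite{OsaPrzy} and split it into a two-part requirement: (i) there are only finitely many isomorphism types of local combinatorial patterns of any fixed bounded size, and (ii) each such pattern occurs infinitely often. For (i), the $G$--action with strict fundamental domain $C(\abs{\mQ})$ implies that there are only three $G$--orbits of vertices (small, big, cone; see Subsection~\ref{subsec:threetypes}). In each case the star and link are completely described by the three local developments analysed in the proof of Theorem~\ref{thm:developable}, and by induction on the radius one obtains finitely many isomorphism types of combinatorial balls in $D$. For (ii), any infinite-order element of $G$ --- for instance an element $g_2g_1$ from Proposition~\ref{prop:inforderelements} --- translates any bounded subcomplex onto infinitely many pairwise disjoint, isometrically isomorphic copies along its axis, producing the required recurrence of patterns.

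Since the argument uses only the existence of a sufficiently rich group acting on $D$ with the listed properties, the conclusion is intrinsic to $D$ as a metric simplicial complex and transfers verbatim to any automorphism group $\Gamma \leq \mathrm{Aut}(D)$: the finitely many shapes and the infinite supply of copies furnished by $G \leq \mathrm{Aut}(D)$ already witness the recurrence condition, regardless of which $\Gamma$ is subsequently fixed. The main obstacle will be the careful matching of our explicit combinatorial data --- the three local developments and the type decomposition of vertices --- to the precise technical formulation of recurrence in \cite{OsaPrzy}; once that bookkeeping is carried out, no geometric input beyond what is already established in Theorem~\ref{thm:developable} and Proposition~\ref{prop:inforderelements} should be required.
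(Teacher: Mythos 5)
There is a genuine gap: you never engage with the actual definition of recurrence from \cite{OsaPrzy}, and the reformulation you substitute for it --- finitely many isomorphism types of bounded combinatorial patterns, each occurring infinitely often --- is your own guess, not the notion the Main Theorem of \cite{OsaPrzy} requires. You yourself flag ``the careful matching \dots to the precise technical formulation'' as the main remaining obstacle, but that matching \emph{is} the entire content of the lemma, so as written the proposal proves a property you invented rather than the one being asserted. The phrasing of the statement, ``recurrent with respect to any automorphism group,'' already indicates that recurrence is a notion relative to a chosen group of automorphisms; your transfer step (``witnesses furnished by $G\leq \mathrm{Aut}(D)$ work for every $\Gamma$'') is therefore suspect: repetition of patterns by $G$--translates along axes of the elements from Proposition~\ref{prop:inforderelements} does not verify a condition that must hold with respect to an arbitrary subgroup $\Gamma$, for instance the trivial group, unless one first checks from the actual definition that the condition is group-independent --- which is exactly the step you defer. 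The reliance on cocompactness and on infinite-order elements is a further sign that the argument aims at the wrong target: the relevant condition is local and metric, not a statement about global repetition of balls.

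The paper's proof has a completely different and much simpler mechanism: by \cite[Remark~2.3 and Example~2.5]{OsaPrzy} it suffices to construct a simplicial map from $D(G, C(\abs{\mQ}))$ onto a single model triangle $T$ with angles $\frac{\pi}{2}$, $\frac{\pi}{3}$, $\frac{\pi}{6}$ and shortest edge of length $1$, restricting to an isometry on every triangle of $D(G, C(\abs{\mQ}))$. Such a folding exists because every $2$--simplex has exactly one small, one big and one cone vertex (Subsection~\ref{subsec:threetypes}) and all triangles carry the single shape of Definition~\ref{def:metriconthecone}, so sending each vertex to the vertex of $T$ of the same type is well defined and isometric on simplices. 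This criterion makes no reference whatsoever to the acting group, which is precisely why the conclusion holds with respect to any automorphism group. To repair your argument, replace the guessed reformulation by the actual definition (or by the quoted criterion) and verify it; the type map above is the natural verification, and none of the dynamical input you invoke --- cocompactness, Theorem~\ref{thm:developable} beyond the metric of Definition~\ref{def:metriconthecone}, or Proposition~\ref{prop:inforderelements} --- is needed.
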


\begin{proof}
  By \cite[Remark~2.3 and Example~2.5]{OsaPrzy} it is enough to construct a simplicial map from $D(G, C(\abs{\mQ}))$ to a simplicial complex $T$ that consists of a single triangle with angles $\frac{\pi}{2}$, $\frac{\pi}{3}$, and $\frac{\pi}{6}$, and the shortest edge of length $1$, which is an isometry on each triangle of $D(G, C(\abs{\mQ}))$.

  Call the vertices of $T$ small, big, and cone, such that the angles at these vertices are respectively $\frac{\pi}{2}$, $\frac{\pi}{3}$, and $\frac{\pi}{6}$. Define the simplicial map $D(G, C(\abs{\mQ})) \to T$ by sending all vertices of a given type to the unique vertex of $T$ of that type. By Subsection~\ref{subsec:threetypes} this assignment gives a well-defined map which is an isometry on each simplex of $D(G, C(\abs{\mQ}))$.
\end{proof}

\begin{proof}[Proof of Proposition~\ref{prop:titsalternative}] 
  The proof is a straightforward application of \cite[Main Theorem]{OsaPrzy}. We have to verify its assumptions. Clearly $G$ acts on $D(G, C(\abs{\mQ}))$ properly by isometries and without inversions. Since the action is cocompact, every finite subgroup of $G$ is subconjugate to some $G_v$ for $v \in \mQ$, and thus there is a uniform bound on the order of finite subgroups. Finally, by Lemma~\ref{lem:recurrent} we get that $D(G, C(\abs{\mQ}))$ is recurrent with respect to $G$. This implies the claim for all finitely generated subgroups of $G$. Then \cite[Lemma~5.1]{OsaPrzy} implies the claim for all subgroups of $G$.      
\end{proof}

\section{Graphical complexes of groups are systolic}

\begin{theorem}\label{thm:gcogissystolic}
  Let $G(\mQ)$ be a $k$--huge graphical complex of groups for $k \geq 6$ and let $G$ be its fundamental group. Then $D(G, C(\abs{\mQ}))$ with the structure of a graphical complex described in Subsection~\ref{subsec:graphicalstructure} satisfies the $C(k)$--condition.
\end{theorem}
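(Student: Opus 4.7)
The plan is to reduce the $C(k)$--condition to an explicit bound on the length of a piece, and to derive that bound from condition~(2) of Definition~\ref{def:gcog} together with the $k$--hugeness of $\mQ$.

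First I would observe that every embedded cycle $C \to D(G, \abs{\mQ})$ factoring through an attaching map $\{g\} \times \abs{\mQ} \to D(G, \abs{\mQ})$ lifts uniquely to an embedded cycle in $\abs{\mQ}$: by Definition~\ref{def:basicconstruction} the equivalence relation does not identify distinct points of a single copy of $\abs{\mQ}$, so the attaching map is a bijection onto $[g, \abs{\mQ}]$. By $k$--hugeness the lifted cycle has length at least $2k$, and translating by $g^{-1}$ reduces us to $g = e$.

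Next I would analyse the intersection $[e, \abs{\mQ}] \cap [h, \abs{\mQ}]$ for $h \in G \setminus \{e\}$, identifying it via $[e, \cdot\,]$ with a subset $F(h) \subseteq \abs{\mQ}$. Reading off Definition~\ref{def:basicconstruction} vertex-by-vertex, a small vertex $v$ lies in $F(h)$ iff $h \in G_v$; a big vertex $w$ lies in $F(h)$ iff $h \in G_w$; and an edge $[v, w]$ with $v$ small lies in $F(h)$ iff $h \in G_v$. The crucial consequence of condition~(2) of Definition~\ref{def:gcog} is that at any big vertex $w \in F(h)$ there is at most one incident $F(h)$--edge, since otherwise two distinct small $v_1, v_2 \leq w$ would give $h$ as a nontrivial element of $\psi_{v_1 w}(G_{v_1}) \cap \psi_{v_2 w}(G_{v_2})$. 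Hence each connected component of $F(h)$ is either an isolated big vertex or the closed star of some small vertex $v \in F(h)$, in particular a tree of diameter at most $2$.

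Finally, any immersed path in such a component has length at most $2$: at a big-vertex leaf of a star the only $F(h)$--edge is the one just traversed, so local injectivity forces the walk to stop. Every piece, being an immersed path in $[g_i, \abs{\mQ}] \cap [g_j, \abs{\mQ}]$ for some $g_i \neq g_j$, therefore has length at most $2$ (and the ``no isomorphism'' clause in the definition of a piece is automatic here, because distinct cones $[g_i, \abs{\mQ}]$ and $[g_j, \abs{\mQ}]$ cannot be matched by an automorphism of $\abs{\mQ}$ commuting with the attaching maps). A decomposition of $C$ into $n$ pieces then satisfies $2n \geq \mathrm{length}(C) \geq 2k$, so $n \geq k$ and $C$ is not a concatenation of fewer than $k$ pieces.

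The step I expect to require the most care is the structural analysis of $F(h)$ in the second paragraph: correctly unpacking Definition~\ref{def:basicconstruction} to identify the vertices and edges of $F(h)$, and then extracting the non-branching property at big vertices as an immediate application of condition~(2) of Definition~\ref{def:gcog}. Once this structural description of $F(h)$ is in place, the bound on piece length and hence the $C(k)$--condition follow by a short combinatorial count against the $2k$ lower bound on cycle length coming from $k$--hugeness.
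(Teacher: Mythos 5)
Your proposal is correct and follows essentially the same route as the paper: pieces lie in the intersection of two distinct cone-cells, which (by condition (2) of Definition~\ref{def:gcog}) is a disjoint union of isolated big vertices and closed stars $\abs{\mQ_{\geq v}}$ of small vertices, hence pieces have length at most $2$, while $k$--hugeness forces any relevant embedded cycle to have length at least $2k$, giving at least $k$ pieces. The only difference is that you spell out in detail the structural analysis of the intersection $[g_i,\abs{\mQ}]\cap[g_j,\abs{\mQ}]$ that the paper states in one line ("a path is a piece if and only if its image is contained in some $\abs{\mQ_{\geq v}}$").
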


\begin{proof}
  Observe that a path $P \to D(G, \abs{\mQ})$ is a piece if and only if its image is contained in $\abs{\mQ_{\geq v}}$ for some small vertex $v \in \mQ$ (seen as a subset of $[g, \abs{\mQ}]$ for some $g \in G$). Since any $\abs{\mQ_{\geq v}}$ has diameter $2$ (with respect to edge-path metric), any piece has length at most $2$. On the other hand, by the $k$--hugeness assumption, the girth of $\abs{\mQ}$ is at least $2k$, and thus any embedded cycle which factors through $[g, \abs{\mQ}]$ for some $g \in G$ is a concatenation of at least $k$ pieces.
\end{proof}

\begin{remark}
  The above proof shows that for any piece $P \to D(G, C(\abs{\mQ}))$ we have $\abs{P} \leq \frac{1}{k} \cdot \mathrm{girth}(\abs{Q})$. Thus $D(G, C(\abs{\mQ}))$ satisfies two slightly stronger small cancellation conditions: $C'(\frac{1}{k-1})$ and $B(k)$ (see \cites{wisecubu, wisehier}; the latter condition is defined only for $k$ even).
\end{remark}

\begin{corollary}\label{cor:wisecomplex}
  The group $G$ admits a geometric action on a $k$--systolic complex $W(D(G, C(\abs{\mQ})))$. Thus $G$ is a $k$--systolic group. Moreover, complexes $D(G, C(\abs{\mQ}))$ and $W(D(G, C(\abs{\mQ})))$ are $G$--homotopy equivalent. 
\end{corollary}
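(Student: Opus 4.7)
The plan is to deduce this corollary directly from Theorem~\ref{thm:graphicalissystolic}, whose hypotheses have essentially all been verified in the preceding sections; the work is just to assemble them. I would first check that $D(G, C(\abs{\mQ}))$ is a simply connected graphical complex satisfying the $C(k)$-condition. Simple connectivity follows from Theorem~\ref{thm:developable}, which endows $D(G, C(\abs{\mQ}))$ with a $G$-invariant $\czero$ metric and hence in particular makes it contractible. The graphical structure is the one described in Subsection~\ref{subsec:graphicalstructure}, and the $C(k)$-condition is precisely the content of Theorem~\ref{thm:gcogissystolic}. Thus the hypotheses of Theorem~\ref{thm:graphicalissystolic} are satisfied, and this yields both that $W(D(G, C(\abs{\mQ})))$ is $k$-systolic and that $D(G, C(\abs{\mQ}))$ and $W(D(G, C(\abs{\mQ})))$ are $G$-homotopy equivalent.

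Next I would verify that the $G$-action on $D(G, C(\abs{\mQ}))$ is geometric in the sense required to transfer to $W(D(G, C(\abs{\mQ})))$. By construction of the Basic Construction (Definition~\ref{def:basicconstruction}), the action of $G$ on $D(G, C(\abs{\mQ}))$ has finite stabilisers (conjugates of the local groups $G_v$) and compact quotient $C(\abs{\mQ})$, so it is proper and cocompact; moreover $G$ preserves the graphical structure (cone-cells are mapped to cone-cells by left multiplication). The second part of Theorem~\ref{thm:graphicalissystolic} therefore gives a $G$-action on $W(D(G, C(\abs{\mQ})))$ which is also proper and cocompact, and the action is simplicial by construction of $W$ as a nerve. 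Combining these, $G$ acts geometrically on the $k$-systolic complex $W(D(G, C(\abs{\mQ})))$, so $G$ is a $k$-systolic group by definition.

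There is no real obstacle here; the corollary is a direct bookkeeping consequence of the two preceding theorems combined with the cited result of the author and Osajda. The only small point worth making explicit in the writeup is that the surjectivity assumption on $\phi \colon \Gamma \to \Theta$ in Definition~\ref{def:wisecomplex} is automatic in our setting, since every point of $D(G, \abs{\mQ})$ lies in some cone-cell $[g, C(\abs{\mQ})]$ by construction of the Basic Construction.
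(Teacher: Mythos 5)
Your proposal is correct and follows the same route as the paper, which simply combines Theorem~\ref{thm:gcogissystolic} with Theorem~\ref{thm:graphicalissystolic}; your additional checks (simple connectivity via the $\czero$ metric, properness and cocompactness of the $G$--action, and surjectivity of the attaching map onto $D(G,\abs{\mQ})$) are exactly the routine verifications the paper leaves implicit.
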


\begin{proof}
  All statements follow from combining Theorem~\ref{thm:gcogissystolic} and Theorem~\ref{thm:graphicalissystolic}.
\end{proof}

Being systolic implies many properties of combinatorial flavour, some of which are still unknown even for $2$--dimensional $\czero$ groups. One of such properties is biautomaticity.

\begin{corollary}{\cite[Theorem~E]{JS2}}\label{cor:biautomatic} 
  The group $G$ is biautomatic.
\end{corollary}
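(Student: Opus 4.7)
The plan is to derive this as a direct combination of the systolicity of $G$ established in Corollary~\ref{cor:wisecomplex} with the main biautomaticity result for systolic groups from \cite{JS2}. So the proof will be essentially a one-line invocation of earlier results in the excerpt, and there is no real obstacle.

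First I would recall that by Corollary~\ref{cor:wisecomplex}, the group $G$ acts geometrically (properly, cocompactly, by simplicial automorphisms) on the simplicial complex $W(D(G, C(\abs{\mQ})))$, and that this complex is $k$--systolic for $k \geq 6$. Since $k$--large complexes are $\ell$--large for every $\ell \leq k$ (as remarked after Definition~\ref{def:klarge}), the link condition for $6$--systolic is automatic, so in particular $W(D(G, C(\abs{\mQ})))$ is systolic in the sense of Definition~\ref{def:klarge} with $k = 6$.

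Next I would apply \cite[Theorem~E]{JS2}, which asserts that any group acting geometrically on a systolic complex is biautomatic. Together with the previous paragraph, this immediately yields that $G$ is biautomatic. Since both the geometric action and the systolicity of the target complex are furnished by Corollary~\ref{cor:wisecomplex}, no additional verification is needed here.

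The main (and only) potential subtlety would be checking that the notion of systolic complex used by Januszkiewicz--Świątkowski matches the one adopted in this article and in \cite{OsaPry}, and that their geometric action hypothesis matches the one in Theorem~\ref{thm:graphicalissystolic}. Both are standard and routine to verify, so I do not expect any obstacle. The proof therefore reduces to citing Corollary~\ref{cor:wisecomplex} and \cite[Theorem~E]{JS2}.
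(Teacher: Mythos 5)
Your proof is correct and is essentially the paper's own argument: Corollary~\ref{cor:wisecomplex} gives a geometric action of $G$ on a $k$--systolic (hence $6$--systolic, since $k$--large implies $6$--large for $k\geq 6$) complex, and \cite[Theorem~E]{JS2} then yields biautomaticity. The only trivial slip is attributing the notion of systolicity to Definition~\ref{def:klarge} (which defines $k$--largeness) rather than the subsequent definition of $k$--systolic complexes, but this does not affect the argument.
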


A downside of the complex $W(D(G, C(\abs{\mQ})))$ is that it can have arbitrarily large dimension.

\begin{proposition}\label{prop:dimwise}
  We have $\mathrm{dim}(W(D(G, C(\abs{\mQ}))))=\mathrm{max}\{\, \abs{G_v} \mid v \in \mQ \,  \}-1$.
\end{proposition}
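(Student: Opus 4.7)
The plan is to compute, for each vertex of $D(G, C(\abs{\mQ}))$, the number of distinct cone-cells containing it, and then conclude that $\mathrm{dim}(W(D(G, C(\abs{\mQ}))))$ equals one less than the maximum such count. Since by part~\hyperref[ast:coneconvention]{$(\ast)$} of Definition~\ref{def:gcog} the local group at the cone vertex $c$ is trivial, cone-cells of $D(G, C(\abs{\mQ}))$ are in bijection with elements of $G$ via $g \mapsto [g, C(\abs{\mQ})]$; moreover, distinct $g_1, g_2 \in G$ give cone-cells whose cone vertices $[g_1, c]$ and $[g_2, c]$ are distinct.

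First I would translate the statement using Definition~\ref{def:wisecomplex}: $\mathrm{dim}(W(D(G, C(\abs{\mQ}))))$ is the maximum $n$ for which there exist $n+1$ pairwise distinct cone-cells of $D(G, C(\abs{\mQ}))$ with non-empty common intersection. The common intersection of any family of distinct cone-cells contains no cone vertex, so it lies in the ``$1$--skeleton'' $D(G, \abs{\mQ})$ introduced in Subsection~\ref{subsec:graphicalstructure}; being a non-empty simplicial subcomplex, it must contain a vertex of the form $[g, v]$ for some $v \in \mQ$ (big or small).

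Next I would count the cone-cells through a given vertex $[g, v]$. From Definition~\ref{def:basicconstruction}, regardless of whether $v$ is big or small, the equivalence $[g, v] = [g', v]$ holds precisely when $g^{-1}g' \in G_v$. Hence the cone-cells containing $[g, v]$ are exactly $\{[g', C(\abs{\mQ})] \mid g' \in gG_v\}$, a family of cardinality $|G_v|$. Conversely, any such family of cone-cells shares the vertex $[g, v]$ and therefore spans a simplex of $W(D(G, C(\abs{\mQ})))$ of dimension $|G_v|-1$. Maximising over $v \in \mQ$ and noting that the bound is realised (e.g.\ by taking the family $\{[g', C(\abs{\mQ})] \mid g' \in G_v\}$ meeting at $[e,v]$ for a $v$ with $|G_v|$ maximal) yields the claimed formula.

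I do not foresee a real obstacle; the only point requiring some care is the reduction of ``non-empty common intersection'' to ``common vertex of $\mQ$-type,'' which is handled by the triviality of the local group at $c$.
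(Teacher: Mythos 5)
Your argument is correct and is essentially the paper's proof spelled out in detail: the paper simply states that the claim follows directly from the definitions of the Basic Construction and the Wise complex, and your coset count $\{[g',C(\abs{\mQ})] \mid g' \in gG_v\}$ of cone-cells through a vertex $[g,v]$ is exactly that computation (note that $\abs{gG_v}=\abs{G_v}$ uses the strict developability guaranteed in this setting, and that distinct cone-cells meet only inside $D(G,\abs{\mQ})$ because $\sim$ identifies no points outside $\abs{\mQ}$, as you implicitly use). No substantive difference from the paper's route.
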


\begin{proof}
  The claim follows directly from the definition of the Basic Construction and the Wise complex. 
\end{proof}

 On the other hand, in the following special case one can obtain a $2$--dimensional systolic complex on which $G$ acts geometrically.

\begin{proposition}\label{prop:directsystolic} 
  If every small vertex of $\mQ$ has valence $2$ then $D(G, C(\abs{\mQ}))$ admits a systolic triangulation. 
\end{proposition}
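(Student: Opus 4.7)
The plan is to construct a coarser simplicial structure $X$ on the same underlying topological space as $D(G, C(\abs{\mQ}))$ in which the small vertices are no longer $0$-simplices. The key geometric observation is that at any small vertex $[g,v]$ with big neighbours $w_1, w_2$, each adjacent $2$-simplex meets $[g,v]$ at angle $\pi/2$, so the path $[g,w_1] - [g,v] - [g,w_2]$ inside each of the $|G_v|$ ``pages'' (pairs of triangles sharing $[g,v]$ and a single cone vertex) is already a geodesic of length $2$. I would ``flip the diagonal'' in every page, replacing the two triangles $[[g,v],[g,w_i],[g\gamma,c]]$ ($i = 1, 2$) by one larger triangle $[[g,w_1],[g,w_2],[g\gamma,c]]$; the point $[g,v]$ then lies in the interior of the new edge $[g,w_1][g,w_2]$, and because $G_v \subseteq G_{w_1} \cap G_{w_2}$ all $|G_v|$ pages share this single big--big edge, meeting it in a ``book'' pattern.

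So $X$ will have vertices of type big and cone, edges of the old big--cone type plus one new big--big edge per small vertex of $\mQ$, and $2$-simplices coming from the flipped pages. The complex $X$ inherits the $G$-action, is $2$-dimensional, and is simply connected since it triangulates the contractible $\czero$ complex $D(G, C(\abs{\mQ}))$. For flagness, the only nontrivial case is three pairwise adjacent vertices of type (big, big, cone): with the big--big edge coming from a small $v$, one must show $G_{w_1} \cap G_{w_2} = G_v$, which follows from the $\czero$ structure because the unique geodesic from $[g,w_1]$ to $[g,w_2]$ passes through $[g,v]$, so any isometry fixing the endpoints also fixes $[g,v]$. The (big, big, big) case is vacuous because the collapsed graph $\mQ'$ (big vertices of $\mQ$ joined by edges corresponding to small vertices) has girth $\geq k \geq 6$.

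The $6$-largeness of links is the crux. The link of a cone vertex $[g,c]$ is canonically isomorphic to $\mQ'$, hence has girth $\geq k$. The link of a big vertex $[g,w]$ is bipartite, with parts the cones $[g\eta,c]$ ($\eta \in G_w$) and the big neighbours indexed by pairs (edge $v$ of $\mQ'$ at $w$, coset of $G_v$ in $G_w$). A putative $4$-cycle forces a common element $\eta' \in \eta_1 G_{v_1} \cap \eta_2 G_{v_2}$ inside $G_w$; using $G_{v_1} \cap G_{v_2} = \{e\}$ from Definition~\ref{def:gcog}(2), this intersection of left cosets has at most one element when $v_1 \neq v_2$ (forcing the two cones to coincide), and equals a single coset when $v_1 = v_2$ (forcing the two big neighbours to coincide). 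Either way the cycle degenerates, so the link has girth $\geq 6$ and $X$ is systolic. The main obstacle is the rigorous verification that the diagonal flip yields an honest simplicial triangulation of the non-manifold complex $D(G, C(\abs{\mQ}))$ at small vertices where $|G_v| \geq 3$ pages meet along a common spine.
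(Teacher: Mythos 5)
Your proposal is essentially the paper's own proof: the paper performs exactly this merge of the two triangles $[v,w_1,c]$ and $[v,w_2,c]$ into $[w_1,w_2,c]$ at every small vertex, and then verifies systolicity by noting that links of big vertices are unchanged (your coset $4$--cycle argument is the same computation as in the proof of Theorem~\ref{thm:developable}) while the girth of each cone-vertex link drops from $2k$ to $k\geq 6$. Your added care about well-definedness of the flipped triangulation, via $G_{w_1}\cap G_{w_2}=G_v$ from uniqueness of $\mathrm{CAT}(0)$ geodesics, is a correct filling-in of details the paper leaves implicit, not a different route.
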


\begin{proof}
  Given any small vertex $v \in D(G, C(\abs{\mQ}))$ let $w_1$ and $w_2$ be the two big vertices of $D(G, C(\abs{\mQ}))$ adjacent to $v$. Remove from $D(G, C(\abs{\mQ}))$ vertex $v$ and then replace edges $[v,w_1]$ and $[v,w_2]$ with a single edge $[w_1,w_2]$. Then given any pair of triangles $[v,w_1,c]$ and $[v,w_2,c]$ for some cone vertex $c \in D(G, C(\abs{\mQ}))$, replace them with a single triangle $[w_1,w_2,c]$. Repeat this procedure for all the small vertices of $D(G, C(\abs{\mQ}))$.

  The above procedure clearly removes all the small vertices of $D(G, C(\abs{\mQ}))$. Moreover, it does not change the links of big vertices. By the proof of Theorem~\ref{thm:developable} these links are $6$--large, as they do not contain cycles of length less than $6$. Now, given any cone vertex $c \in D(G, C(\abs{\mQ}))$, its link before applying the above procedure was isomorphic to $\abs{\mQ}$ (see proof of Theorem~\ref{thm:developable}). One easily sees that the procedure reduces the girth of the link by half. Since by the $k$--hugeness assumption, girth of $\abs{\mQ}$ is equal to $2k$, after the procedure the girth of the link of $c$ is equal to $k \geq 6$.

  Since all vertex links are graphs, $6$--largeness implies that they are flag. Finally, since $D(G, C(\abs{\mQ}))$ supports a $\czero$ metric, it is simply connected, and thus we conclude that $D(G, C(\abs{\mQ}))$ with the simplicial structure defined above is $6$--systolic.
\end{proof}

\begin{remark}
  Proposition~\ref{prop:directsystolic} implies that $D(G, C(\abs{\mQ}))$ admits a $6$--systolic triangulation, regardless of the value of $k$ in the $k$--hugeness assumption on $G(\mQ)$.
\end{remark}

\section{Hyperbolicity}

In this section we investigate when the fundamental group of $G(\mQ)$ is $\delta$--hyperbolic. We begin with the following special case.

\begin{proposition}\label{prop:7systolicishyperbolic}
  Let $G(\mQ)$ be a graphical complex of groups and let $G$ be its fundamental group. If $G(\mQ)$ is $k$--huge for $k \geq 7$ then $G$ is $\delta$--hyperbolic.
\end{proposition}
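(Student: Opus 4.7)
The plan is a direct appeal to results already established earlier in the paper, since the content of the proposition is simply that the threshold $k=7$ is exactly where the non-positive curvature obtained for $k\geq 6$ upgrades to strict negative curvature. First, by Theorem~\ref{thm:gcogissystolic}, the complex $D(G, C(\abs{\mQ}))$ with its natural graphical complex structure from Subsection~\ref{subsec:graphicalstructure} satisfies the $C(k)$-condition, hence in particular the $C(7)$-condition. By Theorem~\ref{thm:developable}, it carries a $G$-invariant $\czero$ metric and is in particular simply connected; moreover the $G$-action on it is proper and cocompact by construction of the Basic Construction. Thus $G$ acts geometrically on a simply connected $C(7)$ graphical small cancellation complex. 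Invoking the result from \cite{OsaPry} (cited in the paragraph preceding Theorem~\ref{thm:intro.notripleshyperbolicity}) that any group acting geometrically on such a complex is Gromov hyperbolic, we conclude that $G$ is $\delta$-hyperbolic.

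An equivalent route, which I would mention as an alternative, is via systolic geometry. Corollary~\ref{cor:wisecomplex} provides a geometric action of $G$ on the Wise complex $W(D(G, C(\abs{\mQ})))$, which is $k$-systolic with $k \geq 7$. The theorem of Januszkiewicz and \'Swi\c{a}tkowski \cite{JS2} that groups acting geometrically on $7$-systolic complexes are hyperbolic then yields the conclusion. Either way the proof is essentially a one-line citation, so I do not anticipate any genuine obstacle; the only thing to verify is that the hypotheses of the cited theorems are met, which is immediate from Theorems~\ref{thm:developable} and~\ref{thm:gcogissystolic}. The substantive work for hyperbolicity of graphical complexes of groups is deferred to the case $k=6$, handled in Theorem~\ref{thm:intro.notripleshyperbolicity} under the extra hypothesis that $D(G, C(\abs{\mQ}))$ contains no proper triple.
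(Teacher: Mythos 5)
Your proposal is correct, and your alternative route (Corollary~\ref{cor:wisecomplex} combined with the Januszkiewicz--\'Swi\c{a}tkowski theorem that $k$--systolic groups for $k \geq 7$ are hyperbolic) is exactly the paper's proof; your first route via the $C(7)$ graphical small cancellation result of \cite{OsaPry} is the same argument repackaged, since that result is itself established through the Wise complex being $7$--systolic.
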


\begin{proof}
  By Corollary~\ref{cor:wisecomplex} we have that $G$ is $k$--systolic, and $k$--systolic groups for $k \geq 7$ are $\delta$--hyperbolic by \cite[Theorem~A]{JS2}.
\end{proof}

As we will see in Section~\ref{subsec:non-hypeg}, the fundamental group $G$ of a $6$--huge complex $G(\mQ)$ is not necessarily $\delta$--hyperbolic. In this section we show that if $D(G, C(\abs{\mQ}))$ does not contain a certain forbidden configuration of cone-cells, called a \emph{proper triple}, then $G$ is $\delta$--hyperbolic.

\begin{theorem}\label{thm:notripleshyperbolicity}
   Let $G(\mQ)$ be a $6$--huge graphical complex of groups and let $G$ be its fundamental group. If $D(G, C(\abs{\mQ}))$ does not contain a proper triple then $G$ is hyperbolic.
\end{theorem}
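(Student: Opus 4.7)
The plan is to deduce Theorem~\ref{thm:notripleshyperbolicity} from Proposition~\ref{prop:piecewisehyperbolic} by constructing a $G$--invariant piecewise hyperbolic $\mathrm{CAT}(-1)$ metric on $D(G, C(\abs{\mQ}))$. Hyperbolicity of $G$ then follows immediately since $G$ acts properly and cocompactly by isometries on a $\mathrm{CAT}(-1)$ space.

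The crucial preliminary step is to translate the no-proper-triples hypothesis into a girth bound: I claim that if $D(G, C(\abs{\mQ}))$ contains no proper triple, then the link at every big vertex has combinatorial girth at least $8$. Recall from the proof of Theorem~\ref{thm:developable} that $\mathrm{Lk}(\tilde{w}) \cong D(G_w, \abs{\mQ_{<w}})$, that all embedded cycles in this link have even length, and that $4$--cycles are excluded by Definition~\ref{def:gcog}(2). A putative embedded $6$--cycle would have the form $([g_1, c], [g_1, v_{i_1}] = [g_2, v_{i_1}], [g_2, c], [g_2, v_{i_2}] = [g_3, v_{i_2}], [g_3, c], [g_3, v_{i_3}] = [g_1, v_{i_3}])$, for pairwise distinct $g_1, g_2, g_3 \in G_w$ and small vertices $v_{i_j} \leq w$. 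The relations $g_j^{-1} g_{j+1} \in G_{v_{i_j}}$, combined with $G_v \cap G_{v'} = \{e\}$ for distinct $v, v' \leq w$ (Definition~\ref{def:gcog}(2)), force the $v_{i_j}$ to be pairwise distinct, since any repetition would force two of the $g_j$ to coincide. The three cone-cells $[g_j, C(\abs{\mQ})]$ then all contain $[1, w]$, each pair additionally shares the edge $[[1, w], [g_j, v_{i_j}]]$ of $\abs{\mQ}$, and the same Definition~\ref{def:gcog}(2) argument shows that no small vertex lies in the triple intersection. Thus the triple intersection is non-empty but strictly smaller than each pairwise intersection, producing a proper triple and contradicting the hypothesis.

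With the girth bound in hand I would metrise each $2$--simplex of $C(\abs{\mQ})$ as a hyperbolic triangle with angles $\alpha > \pi/2$ at the small vertex, $\beta > \pi/4$ at the big vertex, and $\gamma > \pi/6$ at the cone vertex, subject to $\alpha + \beta + \gamma < \pi$. Such a choice is possible because $\pi/2 + \pi/4 + \pi/6 = 11\pi/12 < \pi$ leaves room for strict inequalities. Exactly as in Theorem~\ref{thm:developable} this descends to a $G$--invariant piecewise hyperbolic metric on $D(G, C(\abs{\mQ}))$, and the link condition for $\mathrm{CAT}(-1)$ (see \cite[Theorem~II.5.5]{BH}) is verified vertex by vertex: at a cone vertex the link is $\abs{\mQ}$ with angular girth at least $12\gamma > 2\pi$ by $6$--hugeness; at a small vertex the link is a complete bipartite graph with angular girth $4\alpha > 2\pi$; and at a big vertex the link has combinatorial girth at least $8$ by the previous step, hence angular girth at least $8\beta > 2\pi$. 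Since $D(G, C(\abs{\mQ}))$ is simply connected (being $\mathrm{CAT}(0)$ by Theorem~\ref{thm:developable}) and locally $\mathrm{CAT}(-1)$, it is globally $\mathrm{CAT}(-1)$, so $G$ is hyperbolic.

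The main obstacle is the combinatorial step above: carefully verifying that every embedded $6$--cycle in the link of a big vertex produces a \emph{genuine} proper triple, i.e., one whose triple intersection is strictly contained in every pairwise intersection. The full verification relies on the structural description of proper triples given by Lemma~\ref{lem:structureofpropertriple}; once this correspondence is pinned down, the remaining link-condition verification and the passage to $\mathrm{CAT}(-1)$ are routine variations on the Euclidean argument already used for Theorem~\ref{thm:developable}.
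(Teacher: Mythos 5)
Your proposal is correct, but it takes a genuinely different route from the paper's proof of Theorem~\ref{thm:notripleshyperbolicity}. The paper argues through the Wise complex: Lemma~\ref{lem:notriplenocutup} shows that the absence of proper triples in $D(G,C(\abs{\mQ}))$ excludes isometrically embedded $1$--skeleta of cut-up tetrahedra in $W(D(G,C(\abs{\mQ})))$, Lemma~\ref{lem:nocutupnohyperbolic} (via the metric-triangle criterion of \cite{CCHO}) then gives $\delta$--hyperbolicity of the systolic complex $W(D(G,C(\abs{\mQ})))$, and Corollary~\ref{cor:wisecomplex} transfers this to $G$. You instead build a $G$--invariant piecewise hyperbolic $\mathrm{CAT}(-1)$ metric on $D(G,C(\abs{\mQ}))$ itself — this is exactly the alternative the paper records separately as Proposition~\ref{prop:piecewisehyperbolic}, where the link verification is left to the reader; your added value is that you spell out the one nontrivial combinatorial step, namely that an embedded $6$--cycle in the link $\mathrm{Lk}(\tilde w)\cong D(G_w,\abs{\mQ_{<w}})$ of a big vertex produces three cone-cells whose triple intersection is the big vertex alone while each double intersection also contains a small vertex, i.e.\ a proper triple (here Definition~\ref{def:gcog}(2) and strict developability, identifying $G_{v_i}$ with $\psi_{v_iw}(G_{v_i})\le G_w$, do the work), so no proper triples forces girth at least $8$ in big-vertex links. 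The systolic route has the advantage of being stated for arbitrary simply connected $C(6)$ graphical complexes without reference to a metric and of plugging into the systolic machinery already set up; your route is more elementary and self-contained and yields the stronger conclusion that $G$ is a $\mathrm{CAT}(-1)$ group. Two small points: a repetition among the small vertices $v_{i_j}$ directly forces two vertices of the putative $6$--cycle to coincide (contradicting embeddedness) rather than two of the $g_j$; and the link condition only requires angular girth at least $2\pi$, so the exact angles $\frac{\pi}{2},\frac{\pi}{4},\frac{\pi}{6}$ of Proposition~\ref{prop:piecewisehyperbolic} already suffice — your strict inequalities are harmless but not needed (the relevant references in \cite{BH} are the link condition for $M_\kappa$--polyhedral complexes together with the Cartan--Hadamard theorem).
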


We define a proper triple for a general graphical complex $X$.

\begin{definition}[Proper triple] 
  Let $X$ be a simply connected graphical complex and suppose that $X$ satisfies the $C(6)$--condition. Then for every graph $\Gamma_i$, the map $\Gamma_i \to X$ is an embedding \cite[Lemma~6.10.(1)]{OsaPry} and thus we can identify $\Gamma_i$ with its image in $X$. Observe that then the cone-cell corresponding to $\Gamma_i \to X$ is isomorphic to the cone $C(\Gamma_i) \subset X$.

  A triple of cone-cells $C(\Gamma_1), C(\Gamma_2), C(\Gamma_3)$ in $X$ is called \emph{proper} if the triple intersection $C(\Gamma_1) \cap C(\Gamma_2) \cap C(\Gamma_3)$ is non-empty and if it is a proper subset of every double intersection.
\end{definition}

\begin{definition}\label{def:tetrahedron}
  A \emph{cut-up tetrahedron} is a simplicial complex built out of $6$ vertices and the following $4$ triangles: $[v_1,v_2,v_3]$, $[s_1,v_1,v_2]$, $[s_2,v_2,v_3]$, $[s_3,v_1,v_3]$.
\end{definition}

\begin{lemma}\label{lem:notriplenocutup}
  Let $X$ be a simply connected graphical complex satisfying the $C(6)$--condition. If $X$ does not contain a proper triple then $W(X)$ does not contain a cut-up tetrahedron whose $1$--skeleton is isometrically embedded with respect to the edge-path metric.
\end{lemma}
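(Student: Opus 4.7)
The plan is to argue by contradiction. Suppose $W(X)$ contains a cut-up tetrahedron $T$ whose $1$--skeleton is isometrically embedded in $W(X)^{(1)}$ with respect to the edge-path metric; label its vertices as cone-cells $C_{v_1}, C_{v_2}, C_{v_3}, C_{s_1}, C_{s_2}, C_{s_3}$ of $X$ following Definition~\ref{def:tetrahedron}. I claim that $(C_{v_1}, C_{v_2}, C_{v_3})$ is a proper triple, contradicting the hypothesis. Since $[v_1, v_2, v_3]$ is a simplex of $W(X)$, Definition~\ref{def:wisecomplex} immediately yields $C_{v_1} \cap C_{v_2} \cap C_{v_3} \neq \emptyset$, so all that remains is to verify that each double intersection $C_{v_i} \cap C_{v_j}$ is strictly larger than the triple intersection, equivalently that $C_{v_i} \cap C_{v_j} \not\subseteq C_{v_k}$ whenever $\{i,j,k\} = \{1,2,3\}$.

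The key ingredient is the combinatorial structure of a cut-up tetrahedron: for each pair of $v$-vertices exactly one of the $s$-vertices spans a triangle of $T$ with them and is joined to the remaining $v$-vertex only by a path of length two. I would use $s_1$ as witness that $C_{v_1} \cap C_{v_2} \not\subseteq C_{v_3}$: because $[s_1, v_1, v_2]$ is a simplex of $W(X)$, one may pick some $p \in C_{s_1} \cap C_{v_1} \cap C_{v_2}$; if $C_{v_1} \cap C_{v_2} \subseteq C_{v_3}$ then $p \in C_{s_1} \cap C_{v_3}$, which by Definition~\ref{def:wisecomplex} forces the edge $[s_1, v_3]$ to exist in $W(X)$. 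But inside $T$ the edge-path distance satisfies $d_T(s_1, v_3) = 2$, and the isometric embedding hypothesis forbids any shortcut edge $[s_1, v_3]$ in $W(X)$—contradiction. The analogous statements $C_{v_2} \cap C_{v_3} \not\subseteq C_{v_1}$ (using $s_2$) and $C_{v_1} \cap C_{v_3} \not\subseteq C_{v_2}$ (using $s_3$) follow by identical reasoning.

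I do not foresee a substantive obstacle; the argument is bookkeeping once the witnessing role of the $s$-vertices is isolated. The only fine point to check is that the six labels $v_1, v_2, v_3, s_1, s_2, s_3$ correspond to genuinely distinct cone-cells of $X$, so that the hypothetical edge $[s_1, v_3]$ in $W(X)$ is a legitimate edge rather than a degenerate loop; this is built into Definition~\ref{def:tetrahedron}, since a cut-up tetrahedron has six vertices by definition.
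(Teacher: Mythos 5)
Your proof is correct and is essentially the paper's argument: the paper runs the same reasoning in contrapositive form, noting that if the triple $V_1,V_2,V_3$ is not proper then some double intersection equals the triple intersection, whence $S_1\cap V_1\cap V_2\subseteq V_3$ produces the forbidden shortcut edge $[s_1,v_3]$. Your symmetric use of $s_1,s_2,s_3$ as witnesses for the three strict containments is just the same step applied to each pair, so the two proofs coincide in substance.
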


\begin{proof}
  Let $T \subset W(X)$ be a cut-up tetrahedron with simplices as in Definition~\ref{def:tetrahedron}. We will show that $T^{(1)}$ is not isometrically embedded in $W(X)^{(1)}$. For $i \in \{1,2,3\}$ let $V_i,\, S_i$ denote the cone-cells of $X$ corresponding to vertices $v_i,\, s_i$ of $W(X)$. By definition of $T$, the intersection $V_1 \cap V_2 \cap V_3$ is non-empty. Since the triple $V_1, V_2, V_3$ is not proper, at least one of the double intersections, say $V_1 \cap V_2$, is equal to $V_1 \cap V_2 \cap V_3$. Since $S_1 \cap V_1 \cap V_2$ is non-empty and since \[S_1 \cap V_1 \cap V_2 \subseteq V_1 \cap V_2=V_1 \cap V_2 \cap V_3,\] we conclude that $S_1 \cap V_3$ is non-empty. This means that we have an edge $[s_1,v_3]$ in $W(X)$ and thus $T^{(1)}$ is not isometrically embedded in $W(X)^{(1)}$.
\end{proof}

\begin{lemma}\label{lem:nocutupnohyperbolic}
  Let $Y$ be a systolic simplicial complex. If $Y$ does not contain an isometrically embedded $1$--skeleton of a cut-up tetrahedron then $Y$ is $\delta$--hyperbolic with respect to the edge-path metric.
\end{lemma}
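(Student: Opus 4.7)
The plan is to argue the contrapositive: if $Y$ is a systolic complex that fails to be $\delta$--hyperbolic for any $\delta$ (with respect to its edge-path metric), then the $1$--skeleton of some cut-up tetrahedron embeds isometrically into $Y$. The argument factors through the equilateral triangulation $\mathbb{E}_{\triangle}$ of the Euclidean plane, viewed as the prototypical $6$--systolic flat.

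First, I would invoke the flat plane theorem for systolic complexes: a systolic complex $Y$ fails to be $\delta$--hyperbolic if and only if the $1$--skeleton of $\mathbb{E}_{\triangle}$ admits an isometric embedding (with respect to edge-path metrics) into $Y^{(1)}$. This is the systolic analogue of the $\czero$ flat plane theorem, due in this setting to Elsner and refined by Januszkiewicz--\'Swi\k{a}tkowski and Chepoi--Osajda. Assuming $Y$ is not hyperbolic, fix such an isometric embedding $\iota \colon \mathbb{E}_{\triangle}^{(1)} \hookrightarrow Y^{(1)}$.

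Second, I locate a cut-up tetrahedron isometrically inside $\mathbb{E}_{\triangle}$. Pick any triangle $[v_1,v_2,v_3]$ of $\mathbb{E}_{\triangle}$; across each edge $[v_i,v_j]$ lies a unique adjacent triangle whose third vertex I label $s_k$ so that the four $2$--simplices $[v_1,v_2,v_3]$, $[s_1,v_1,v_2]$, $[s_2,v_2,v_3]$, $[s_3,v_1,v_3]$ match Definition~\ref{def:tetrahedron}. A direct computation in standard lattice coordinates on $\mathbb{E}_{\triangle}$ shows that the edge-path distances among these six vertices are exactly $d(v_i,v_j)=1$; $d(s_i, v_j)=1$ on the six adjacencies listed in the four triangles; $d(s_i, v_k)=2$ for the unique $v_k$ not incident to $s_i$; and $d(s_i,s_j)=2$ for all $i\neq j$ (realised via the common neighbour among the $v$'s). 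These agree precisely with the edge-path distances in the $1$--skeleton of the cut-up tetrahedron, so this six-vertex subgraph sits isometrically in $\mathbb{E}_{\triangle}^{(1)}$. Composing with $\iota$ yields an isometrically embedded cut-up tetrahedron $1$--skeleton in $Y$, contradicting the hypothesis.

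The main obstacle is the first step: citing the systolic flat plane theorem in exactly the form needed, or proving it. Absent a clean citation, I would proceed as follows. Failure of $\delta$--hyperbolicity for every $\delta$ produces a sequence of geodesic triangles in $Y$ with interior vertices at arbitrarily large distance from each side. Using flagness and $6$--largeness of vertex links to control the local combinatorics, one extracts a pointed Gromov-Hausdorff limit whose $1$--skeleton contains an isometric copy of $\mathbb{E}_{\triangle}^{(1)}$; since only a six-vertex configuration is required at the end, a compactness and pigeonhole argument promotes the asymptotic flat region to a genuinely embedded finite flat patch already inside $Y$, which then contains the desired cut-up tetrahedron via the explicit construction of the second step.
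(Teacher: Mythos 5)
Your second step is fine: the six lattice points $v_1,v_2,v_3,s_1,s_2,s_3$ do span an isometric copy of the $1$--skeleton of the cut-up tetrahedron inside the equilateral triangulation of the plane. The problem is the first step, which is where all the content lies. The ``systolic flat plane theorem'' you invoke is only available (and only true) in the presence of a geometric group action: Przytycki's theorem \cite{Przyhyp} and Elsner's results \cite{E1} both assume a group acting geometrically on the complex, and the remark in the paper immediately after Lemma~\ref{lem:nocutupnohyperbolic} draws exactly this distinction, since the lemma is stated for an arbitrary systolic complex $Y$ with no action whatsoever. Without cocompactness the equivalence ``non-hyperbolic $\Leftrightarrow$ contains an isometrically embedded copy of $\mathbb{E}_{\triangle}^{(1)}$'' is false: for instance, attach to an infinite ray, at its $n$-th vertex, a flat equilateral triangle of side $n$ (a convex piece of the triangular lattice). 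The result is a uniformly locally finite systolic complex that is not $\delta$--hyperbolic for any $\delta$, yet contains no isometrically embedded copy of the whole plane's $1$--skeleton. So the contrapositive cannot be run through a full flat plane. Your fallback paragraph (fat triangles, Gromov--Hausdorff limit, ``compactness and pigeonhole'') is precisely the nontrivial step restated, not an argument; in particular it is not clear that a limit of fat triangles in a non-cocompact complex yields a plane at all, and promoting an asymptotic flat back into $Y$ is exactly what needs proof.

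What rescues the statement is that you only need a bounded-size flat patch, not a plane, and for that there is a local-to-global criterion that does not require any action: by \cite[Proposition~9.10]{CCHO}, a complex of this kind is $\delta$--hyperbolic as soon as its metric triangles have uniformly bounded sides, and a metric triangle of side at least $2$ in a systolic complex spans an isometrically embedded flat equilateral triangle whose side-$2$ subtriangle is exactly the configuration of Definition~\ref{def:tetrahedron}. This is the paper's (one-line) proof, and it is the correct replacement for your flat plane step. (In the intended application $Y=W(D(G,C(\abs{\mQ})))$ does admit a geometric $G$--action, so there your route could be salvaged by citing \cite{Przyhyp} directly, but that proves less than the lemma as stated.)
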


\begin{proof}
  Since $Y$ does not contain an isometrically embedded $1$--skeleton of a cut-up tetrahedron, metric triangles in $Y$ (see \cite{CCHO}) have uniformly bounded side length. Thus by \cite[Proposition 9.10]{CCHO} we obtain that $Y$ is $\delta$--hyperbolic.
\end{proof}

\begin{proof}[Proof of Theorem~\ref{thm:notripleshyperbolicity}]
  Let $X=D(G, C(\abs{\mQ}))$. Since $X$ does not contain a proper triple, by Lemma~\ref{lem:notriplenocutup} we get that $W(X)$ does not contain an isometrically embedded $1$--skeleton of a cut-up tetrahedron. Thus by Lemma~\ref{lem:nocutupnohyperbolic} we get that $W(X)$ is $\delta$--hyperbolic. Since by Corollary~\ref{cor:wisecomplex} $G$ acts geometrically on $W(X)$, we conclude that $G$ is $\delta$--hyperbolic.
\end{proof}

\begin{remark}
  Observe that Lemmas~\ref{lem:notriplenocutup} and \ref{lem:nocutupnohyperbolic} do not involve any group acting on $X$ or $Y$. However, if $Y$ admits a geometric action of a group then Lemma~\ref{lem:nocutupnohyperbolic} follows from \cite[Theorem~1.2]{Przyhyp} (see also \cite[Corollary~4.14]{E1}).
\end{remark}

Proper triples in the Basic Construction $D(G, C(\abs{\mQ}))$ have a very simple structure. Recall that, by definition, any cone-cell  in $D(G, C(\abs{\mQ}))$ is of the form $[g, C(\abs{\mQ})]$ for some $g \in G$. 

\begin{lemma}\label{lem:structureofpropertriple}
  A proper triple in $D(G, C(\abs{\mQ}))$ consists of three cone-cells indexed by $g_1$, $g_2$, $g_3$, such that
  \begin{itemize}
    \item the triple intersection is equal to $[g_i, w]$ for some big vertex $w \in \mQ$ and we have $g_i \in G_w$ for $i \in \{1,2,3\}$,
    \item double intersections are equal to respectively $[g_1,\abs{\mQ_{\geq v_1}}]$, $[g_2,\abs{\mQ_{\geq v_2}}]$, and $[g_3,\abs{\mQ_{\geq v_3}}]$, for some distinct small vertices $v_1,v_2,v_3 \in \abs{\mQ_{\leq w}}$ and we have $g_1^{-1}g_3 \in G_{v_1}$, $g_1^{-1}g_2 \in G_{v_2}$, and $g_2^{-1}g_3 \in G_{v_3}$.
  \end{itemize}
\end{lemma}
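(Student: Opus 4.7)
The plan is to analyse the pairwise intersections $I_{ij} := [g_i, C(\abs{\mQ})] \cap [g_j, C(\abs{\mQ})]$ directly from the equivalence relation of Definition~\ref{def:basicconstruction}, and then use the properness of the triple together with the $6$-hugeness of $\mQ$ to pin down the combinatorial structure. By the definition of $\sim$, a point $[g_i, x]$ lies in $[g_j, C(\abs{\mQ})]$ precisely when $a := g_i^{-1}g_j$ lies in the stabiliser at $x$, which is $G_w$ at a big vertex $w$, is $G_v$ at any point of $\abs{\mQ_{\geq v}}$ for a small vertex $v$, and is trivial at the cone vertex and at interior points of cone edges and triangles.

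The first key step is to classify the fixed set $F_a := \{x \in C(\abs{\mQ}) : a \in G_x\}$ for non-trivial $a$. Condition~(2) of Definition~\ref{def:gcog} guarantees that $a$ belongs to $G_v$ for at most one small vertex $v$; combined with the facts, following from strict developability (Theorem~\ref{thm:developable}) and the girth condition $\geq 12$, that $G_v \cap G_w$ equals $G_v$ when $v \leq w$ and is trivial otherwise, and that $G_{w_1} \cap G_{w_2}$ for distinct big vertices is either $G_u$ for the unique common small neighbour $u$ of $w_1$ and $w_2$, or trivial, this shows that $F_a$ is either a single big vertex or a star $\abs{\mQ_{\geq v}}$ of a unique small vertex. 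The second key step rules out the single-big-vertex case: if some $I_{ij} = \{[g_i, w]\}$, then $T := I_{12} \cap I_{13} \cap I_{23}$ being non-empty and contained in $I_{ij}$ would give $T = I_{ij}$, contradicting proper containment. Hence each pairwise intersection is a star, producing small vertices $v_1, v_2, v_3$ with $I_{13} = [g_1, \abs{\mQ_{\geq v_1}}]$, $I_{12} = [g_2, \abs{\mQ_{\geq v_2}}]$, $I_{23} = [g_3, \abs{\mQ_{\geq v_3}}]$ and the claimed coset relations.

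The final step verifies that $v_1, v_2, v_3$ are pairwise distinct and computes the triple intersection. If, say, $v_1 = v_2 =: v$, then both $g_1^{-1}g_2$ and $g_1^{-1}g_3$ lie in $G_v$, hence also $g_2^{-1}g_3 \in G_v$, forcing $I_{12} = I_{13} = I_{23}$ and therefore $T$ equal to this common star, which contradicts properness. A point of $T$ must have a common pre-image in the three stars $\abs{\mQ_{\geq v_i}}$, and since the $v_i$ are distinct small vertices, the only such points are big vertices $w$ with $v_1, v_2, v_3 \leq w$. The girth assumption $\geq 12$ implies that any two distinct small vertices share at most one big neighbour (otherwise they would bound a $4$-cycle), giving at most one such $w$; non-emptiness of $T$ then yields exactly one. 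Since $g_i^{-1}g_j \in G_{v_\ell} \subseteq G_w$ for the appropriate indices, the three points $[g_i, w]$ coincide, and after replacing the triple by its $g_1^{-1}$-translate the new indexing elements $e, g_1^{-1}g_2, g_1^{-1}g_3$ all lie in $G_w$. The main obstacle I expect is the classification of $F_a$: one has to control the intersections of local subgroups $G_v, G_w$ inside $G$, which I would handle either by direct inspection using convexity of fixed sets of finite-order isometries in the $\czero$ space $D(G, C(\abs{\mQ}))$ from Theorem~\ref{thm:developable}, or by appealing to the standard structure theorem for stabilisers in strict developments of simple complexes of groups.
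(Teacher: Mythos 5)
Your proposal follows essentially the same route as the paper: classify the pairwise intersections of cone-cells, observe that properness rules out point intersections and forces all three double intersections to be stars of small vertices, and then use the girth bound plus Definition~\ref{def:basicconstruction} to pin down the unique big vertex and the coset relations. The paper's own proof simply asserts the classification of pairwise intersections ("$[g,\abs{\mQ_{\geq v}}]$ or $[g,w]$") and leaves the bookkeeping implicit, so your extra work on $F_a$ and on the distinctness of $v_1,v_2,v_3$ is a faithful elaboration rather than a different argument, and your final steps (distinctness via $g_2^{-1}g_3\in G_v$, uniqueness of $w$ via the no-$4$-cycle observation, normalising by $g_1^{-1}$) are all correct.

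The one place where you are too quick is the classification of $F_a$ itself, which you rightly flag as the main obstacle. Condition~(2) of Definition~\ref{def:gcog} only controls $G_{v_1}\cap G_{v_2}$ for small vertices lying below a \emph{common} big vertex, and strict developability by itself says nothing about intersections such as $G_{w_1}\cap G_{w_2}$ for distant big vertices or $G_v\cap G_w$ with $v\not\leq w$; these are genuinely geometric statements. The convexity route you mention does close this, but it needs one more ingredient than convexity of fixed sets: the strict fundamental domain $[e,C(\abs{\mQ})]$ is convex in $D(G,C(\abs{\mQ}))$ (the retraction $[g,x]\mapsto[e,x]$ is $1$--Lipschitz, so the chamber is isometrically embedded and, by uniqueness of $\czero$ geodesics, convex). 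Granting this, if $e\neq a\in G_x\cap G_y$ for two vertices $x,y$ of $\mQ$, the geodesic from $[e,x]$ to $[e,y]$ lies in $\mathrm{Fix}(a)\cap[e,\abs{\mQ}]$, hence is an edge-path in the chamber all of whose local groups contain $a$; at any big vertex interior to this path the two incident edges give $a\in G_{u_1}\cap G_{u_2}$ with $u_1\neq u_2\leq w$, contradicting condition~(2). Since $\abs{\mQ}$ is bipartite, this yields exactly your three "facts" (at most one small vertex with $a\in G_v$; $G_v\cap G_w$ trivial unless $v\leq w$; $G_{w_1}\cap G_{w_2}\subseteq G_u$ for the at most one common small neighbour $u$). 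Without chamber convexity the same local analysis only produces conjugates $h_iG_{u_i}h_i^{-1}$ inside $G_w$, which condition~(2) does not control, so this is the point to make explicit if you write the argument out in full.
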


\begin{proof}
  The possible intersections between cone-cells are equal to $[g, \abs{\mQ_{\geq v}}]$ for some small vertex $v \in \mQ$, or $[g, w]$ for some big vertex $w \in \mQ$. Moreover, the only possibility for having proper inclusions of the triple intersection into double intersections, is that the triple intersection is equal to  $[g, w]$ for some big vertex $w \in \mQ$ and the double intersections are all of the form $[g_i,\abs{\mQ_{\geq v_i}}]$. The remaining claims follow now directly from Definition~\ref{def:basicconstruction}.
\end{proof}

\begin{proposition}\label{prop:2smallneighbourshyp}
  Suppose that $\mQ$ is a poset of simplices of a finite graph (or, equivalently, suppose that for every big vertex $w\in \mQ$ there are exactly two small vertices $v_1, v_2$ with $v_1 \leq w$ and $v_2 \leq w$). Assume that $\mQ$ is $6$--huge, let $G(\mQ)$ be a graphical complex of groups and let $G$ be its fundamental group. Then $G$ is $\delta$--hyperbolic.
\end{proposition}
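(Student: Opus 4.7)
The plan is to reduce the statement to Theorem~\ref{thm:notripleshyperbolicity} by showing that the hypothesis on $\mQ$ rules out the existence of any proper triple in $D(G,C(\abs{\mQ}))$. Since $G(\mQ)$ is assumed to be $6$--huge, Theorem~\ref{thm:notripleshyperbolicity} applies as soon as we verify the absence of proper triples, at which point $\delta$--hyperbolicity of $G$ is immediate.

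First I would invoke the structural description provided by Lemma~\ref{lem:structureofpropertriple}: any proper triple in $D(G,C(\abs{\mQ}))$ must involve three cone-cells whose pairwise intersections are of the form $[g_i,\abs{\mQ_{\geq v_i}}]$ for three \emph{distinct} small vertices $v_1, v_2, v_3$, all satisfying $v_i \leq w$ for a single big vertex $w \in \mQ$. In other words, the existence of a proper triple forces the big vertex $w$ to have at least three small vertices below it in $\mQ$.

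Next I would observe that the hypothesis on $\mQ$ precludes this. The parenthetical reformulation in the statement says exactly that every big vertex $w \in \mQ$ has precisely two small vertices below it, which matches the poset of simplices of a graph (small vertices correspond to vertices of the graph, big vertices to edges, and each edge has exactly two endpoints). Consequently there cannot exist three distinct small vertices $v_1, v_2, v_3 \leq w$ for any big vertex $w$, so by Lemma~\ref{lem:structureofpropertriple} no proper triple can exist in $D(G,C(\abs{\mQ}))$.

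With the absence of proper triples established, Theorem~\ref{thm:notripleshyperbolicity} yields that $G$ is $\delta$--hyperbolic. There is no real obstacle here; the entire content of the proposition is that the combinatorial constraint ``at most two small vertices below each big vertex'' is precisely incompatible with the structural form of a proper triple identified in Lemma~\ref{lem:structureofpropertriple}, so the result is essentially a direct corollary of the two cited results.
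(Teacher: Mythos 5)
Your proposal is correct and follows exactly the paper's argument: use Lemma~\ref{lem:structureofpropertriple} to see that a proper triple would force three distinct small vertices below a single big vertex, which the hypothesis excludes, and then apply Theorem~\ref{thm:notripleshyperbolicity}. Nothing is missing.
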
 

\begin{proof} 
  In light of Theorem~\ref{thm:notripleshyperbolicity}, it is enough to show that $D(G, C(\abs{\mQ}))$ does not contain a proper triple. By Lemma~\ref{lem:structureofpropertriple} the existence of a proper triple implies that there is a big vertex $w \in \mQ$ with three distinct small vertices contained in $\abs{\mQ_{\leq w}}$. This contradicts the assumption that there are exactly two small vertices which are smaller than $w$.
\end{proof}

\begin{remark}\label{rem:notriplesisT4}
  The condition of `not containing proper triples' is essentially saying that any closed loop in a link of a big vertex in $D(G, C(\abs{\mQ}))$ passes through at least $4$ cone-cells. Thus, this condition may be seen as a version of a $T(4)$ small cancellation condition. 
\end{remark}

We end this section with the following observation.

\begin{proposition}[Piecewise hyperbolic $\mathrm{CAT}(-1)$ structure on $D(G, C(\abs{\mQ}))$]\label{prop:piecewisehyperbolic}
  Suppose that $G(\mQ)$ is either $k$--huge with $k \geq 7$, or it is $6$--huge and does not contain a proper triple. Then $D(G, C(\abs{\mQ}))$ admits a piecewise hyperbolic $\mathrm{CAT}(-1)$ metric. Thus $G$ is a $\mathrm{CAT}(-1)$ group.
\end{proposition}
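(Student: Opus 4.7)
The plan is to replace the Euclidean metric on $C(\abs{\mQ})$ from Definition~\ref{def:metriconthecone} by a piecewise hyperbolic metric, by prescribing angles $\alpha_s$, $\alpha_b$, $\alpha_c$ at the small, big, and cone vertex of each $2$--simplex, respectively. Recall that in hyperbolic geometry any triple of positive angles with $\alpha_s+\alpha_b+\alpha_c<\pi$ is realised by a unique (up to isometry) hyperbolic triangle. Once the angles are fixed, the metric extends to $D(G,C(\abs{\mQ}))$ by declaring every cone-cell $[g,C(\abs{\mQ})]$ to be isometric to $C(\abs{\mQ})$ via multiplication by $g$, exactly as in the proof of Theorem~\ref{thm:developable}. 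Since this piecewise hyperbolic metric is by construction $G$--invariant, and since $D(G,C(\abs{\mQ}))$ is already known to be simply connected (being $\czero$ by Theorem~\ref{thm:developable}), the Cartan--Hadamard theorem reduces the proof to verifying the link condition locally: the angular link of every vertex must have girth at least $2\pi$.

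By Remark~\ref{rem:linksandstars}, links of vertices in $D(G,C(\abs{\mQ}))$ are isomorphic to the local developments at vertices of $C(\abs{\mQ})$ described in the three cases of the proof of Theorem~\ref{thm:developable}. I will reuse that combinatorial analysis to extract the necessary lower bounds on the angles. The cone vertex link is $\abs{\mQ}$, which has girth at least $2k$, forcing $\alpha_c\geq \pi/k$. The small vertex link is a join whose shortest embedded cycles have length $4$, forcing $\alpha_s\geq \pi/2$. The big vertex link is $D(G_w,\abs{\mQ_{<w}})$, whose embedded cycles have even length; the axiom $\psi_{v_1w}(G_{v_1})\cap\psi_{v_2w}(G_{v_2})=\{e\}$ rules out $4$--cycles, and by the proper-triple/$T(4)$ interpretation of Remark~\ref{rem:notriplesisT4} the absence of proper triples rules out $6$--cycles as well. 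Hence the required lower bound is $\alpha_b\geq \pi/3$ in general and $\alpha_b\geq \pi/4$ if no proper triples are present.

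It remains to exhibit admissible angles in each case. For a $k$--huge complex with $k\geq 7$, set $\alpha_s=\pi/2$, $\alpha_b=\pi/3$, $\alpha_c=\pi/7$, whose sum is $41\pi/42<\pi$. For a $6$--huge complex without proper triples, set $\alpha_s=\pi/2$, $\alpha_b=\pi/4$, $\alpha_c=\pi/6$, whose sum is $11\pi/12<\pi$. In both cases the angles define a genuine hyperbolic triangle, and the girth of each vertex link, in the induced angular metric, is at least $2\pi$ by direct multiplication (for example, $8\cdot\pi/4=2\pi$ for the big vertex link in the $6$--huge case). Consequently $D(G,C(\abs{\mQ}))$ is locally $\mathrm{CAT}(-1)$, and Cartan--Hadamard upgrades this to a global $\mathrm{CAT}(-1)$ metric. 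Since the action of $G$ remains geometric, this establishes that $G$ is a $\mathrm{CAT}(-1)$ group.

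The only nontrivial point is the angle budget: a hyperbolic triangle exists only when the angles sum to strictly less than $\pi$, so the argument would collapse if the three lower bounds added up to $\geq\pi$. This is precisely where the hypotheses are used: without the no-proper-triples condition, the best one could demand of $\alpha_b$ in the $k=6$ case is $\pi/3$, which together with $\alpha_s\geq\pi/2$ and $\alpha_c\geq\pi/6$ already exhausts the Euclidean budget of $\pi$. The stronger girth bound of $8$ in the big vertex link (equivalently, the $T(4)$-like condition) is what frees up the slack needed to pass to hyperbolic triangles.
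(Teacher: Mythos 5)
Your proposal is correct and follows essentially the same route as the paper: the same two hyperbolic triangle shapes ($\frac{\pi}{2},\frac{\pi}{3},\frac{\pi}{7}$ and $\frac{\pi}{2},\frac{\pi}{4},\frac{\pi}{6}$) and the same link-girth verification, using the local developments from Theorem~\ref{thm:developable} and the fact that absence of proper triples excludes $6$--cycles in big-vertex links. Your extra explanation of the angle budget and the Cartan--Hadamard step just makes explicit what the paper leaves to the reader.
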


\begin{proof}
  In either situation, we alter the metric on $D(G, C(\abs{\mQ}))$ slightly. In the first case, declare all triangles to be isometric to a hyperbolic triangle with angle $\frac{\pi}{2}$ at the small vertex, angle $\frac{\pi}{3}$ at the big vertex, and angle $\frac{\pi}{7}$ at the cone vertex. In the second case, we take a hyperbolic triangle with respective angles $\frac{\pi}{2}$, $\frac{\pi}{4}$,$\frac{\pi}{6}$. The reader easily verifies that with respect to these metrics, the links of $D(G, C(\abs{\mQ}))$ have girth at least $2\pi$, and thus the piecewise hyperbolic metric on $D(G, C(\abs{\mQ}))$ is in fact $\mathrm{CAT}(-1)$.
\end{proof}

Note that being $\mathrm{CAT}(-1)$ is a possibly stronger property than $\delta$--hyperbolicity (see \cite[Proposition~III.H.1.2]{BH}). In particular, the above proposition gives an alternative way of showing that $G$ is $\delta$--hyperbolic.

\section{Examples}\label{sec:examples}

\subsection{Graphical products}

Let $\mQ$ be a $6$--huge $1$--dimensional poset.

\begin{definition}[Graphical product of finite groups]\label{def:graphicalproduct}
  A \emph{graphical product} of finite groups is a graphical complex of groups $G(\mQ)$ over $\mQ$ defined as follows.
  \begin{itemize}
    \item For every small vertex $v$ take an arbitrary finite group $G_v$. 
    \item For every big vertex $w$ define $G_w= G_{v_1} \times G_{v_2} \times \ldots \times G_{v_k}$ where $v_1, v_2,\ldots, v_k$ are small vertices with $v_i \leq w$, and let $\phi_{v_iw}$ be the canonical inclusion of $G_{v_i}$ as a direct factor of $G_w$.
  \end{itemize}
\end{definition}

Definition~\ref{def:graphicalproduct}, while appropriately rephrased, is a special case of a \emph{graph product} of finite groups (see \cite[Example~18.1.10]{Davbook} for a definition).  

\begin{definition}\label{def:graphproduct}
  Let $K$ be a simplicial complex built out of $\abs{\mQ}$ in the following way. For every big vertex $w \in Q$ replace $\abs{\mQ_{\leq w}}$ with a simplex spanned by all small vertices $v \in \abs{\mQ_{\leq w}}$. Note that since $\mQ$ is $6$--huge, this procedure does not result in double edges, and thus $K$ is indeed a simplicial complex (actually, $3$--hugeness would suffice here).

  Observe that vertices of $K$ are precisely the small vertices of $\abs{\mQ}$. Thus we can consider the graph product $G(K)$ over $K$ with groups $G_v$ as vertex groups.
\end{definition}

One easily verifies that fundamental groups of $G(K)$ and $G(\mQ)$ are isomorphic. However, the Basic Construction $D(G, C(K))$ (in this generality defined in \cite{Davbook}) has dimension equal to $\mathrm{max}_{w \in \mQ}  \abs{ \{v  \in \mQ \mid v \leq w \}}$, while $D(G, C(\abs{\mQ}))$ is by definition $2$--dimensional.  

In \cite{PePry} we describe the so-called Bestvina complex, which is a complex resulting from `equivariantly simplifying' complex $D(G, C(K))$. One easily verifies that Bestvina complex associated to $G(K)$ results in a Basic Construction $D(G, B)$ homeomorphic to $D(G, C(\abs{\mQ}))$. 

\begin{example}
  Let $G(\mQ)$ be a graphical product such that $G_v \cong \mathbb{Z}/2$ for every $v \in \mQ$. Then the fundamental group $G$ of $G(\mQ)$ is called the \emph{right-angled Coxeter group} associated to complex $K$ of Definition~\ref{def:graphproduct}.
\end{example}

\begin{proposition}\label{prop:graphicalproducthyp}
  Let $G$ be the fundamental group of a graphical product $G(\mQ)$. Then $G$ is $\delta$--hyperbolic.
\end{proposition}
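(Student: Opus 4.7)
The plan is to deduce this from Theorem~\ref{thm:notripleshyperbolicity}: since $G(\mQ)$ is by assumption $6$--huge, it suffices to verify that $D(G, C(\abs{\mQ}))$ contains no proper triple of cone-cells. I argue by contradiction, supposing that a proper triple exists and extracting an incompatibility from the direct product structure that defines a graphical product.

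Applying Lemma~\ref{lem:structureofpropertriple} to such a hypothetical proper triple yields three pairwise distinct elements $g_1, g_2, g_3 \in G_w$ for some big vertex $w \in \mQ$, together with three pairwise distinct small vertices $v_1, v_2, v_3 \in \abs{\mQ_{\leq w}}$ satisfying $g_1^{-1}g_3 \in G_{v_1}$, $g_1^{-1}g_2 \in G_{v_2}$, and $g_2^{-1}g_3 \in G_{v_3}$, all viewed inside $G_w$. By Definition~\ref{def:graphicalproduct}, $G_w$ splits as the direct product $G_{v_1} \times \cdots \times G_{v_k}$ of the local groups at the small vertices below $w$, and each $G_{v_i}$ sits inside $G_w$ as the corresponding direct factor. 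Let $\pi_j \colon G_w \to G_{v_j}$ denote projection onto the $j$-th factor.

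Unpacking the three containments through these projections, the condition $g_1^{-1} g_2 \in G_{v_2}$ is equivalent to $\pi_j(g_1) = \pi_j(g_2)$ for every $j \neq 2$, and similarly for the other two containments. The final step is a short bookkeeping exercise: because $v_1, v_2, v_3$ are pairwise distinct, each index $j \in \{1,\dots,k\}$ is excluded from at most one of the three conditions, and the remaining two always suffice to chain the three projections together into the equality $\pi_j(g_1) = \pi_j(g_2) = \pi_j(g_3)$. Since this holds for every $j$, the direct product decomposition forces $g_1 = g_2 = g_3$, contradicting distinctness of the three cone-cells in the proper triple. The only genuinely delicate point is that distinctness of the $v_i$'s is what guarantees every index $j$ survives in at least two of the three relations; apart from that, the proof is a direct translation between the description of proper triples in Lemma~\ref{lem:structureofpropertriple} and the coordinate structure of a direct product.
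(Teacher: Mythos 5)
Your proof is correct and follows essentially the same route as the paper: reduce to the absence of proper triples via Theorem~\ref{thm:notripleshyperbolicity}, invoke Lemma~\ref{lem:structureofpropertriple}, and then exploit the direct-factor structure of $G_w$ in Definition~\ref{def:graphicalproduct}. Your coordinate bookkeeping with the projections $\pi_j$ (forcing $g_1=g_2=g_3$, contradicting distinctness of the cone-cells) is just an explicit rewriting of the paper's one-line computation $g_2^{-1}g_3=(g_1^{-1}g_2)^{-1}g_1^{-1}g_3\in G_{v_1}\times G_{v_2}$, which meets $G_{v_3}$ only in the identity.
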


\begin{proof}
  By Theorem~\ref{thm:notripleshyperbolicity} it is enough to show that $D(G, C(\abs{\mQ}))$ does not contain a proper triple. Assume the contrary. Then by Lemma~\ref{lem:structureofpropertriple} there exists a big vertex $w \in \mQ$ and three elements $g_1, g_2, g_3 \in G_w$ such that for some distinct small vertices $v_1,v_2,v_3 \in \abs{\mQ_{\leq w}}$ we have $g_1^{-1}g_3 \in G_{v_1}$, $g_1^{-1}g_2 \in G_{v_2}$, and $g_2^{-1}g_3 \in G_{v_3}$. Since \[g_2^{-1}g_3= (g_1^{-1}g_2)^{-1} g_1^{-1}g_3\] we have that $g_2^{-1}g_3\in G_{v_1}\times G_{v_2} \leq G_w$.  On the other hand we have $g_2^{-1}g_3 \in G_{v_3}$ which contradicts the fact that $G_w= G_{v_1} \times  G_{v_2} \times  G_{v_3} \times \ldots \times  G_{v_k}$ and that for any $v_i$ the map $G_{v_i} \to  G_{w}$ is the canonical inclusion of a direct factor. 
\end{proof}

\begin{remark}
  Proposition~\ref{prop:graphicalproducthyp} follows immediately from a well-known fact that a graph product of finite groups is $\delta$--hyperbolic if and only if the defining simplicial complex $K$ is $5$--large \cite[Corollary~18.3.10]{Davbook}. One easily sees that if $\mQ$ is $6$--huge then $K$ is $6$--large.

  We decided to include the proof since it is instructive in understanding how to obtain non-hyperbolic examples of graphical complexes of groups.
\end{remark}

\subsection{Coxeter groups with 1--dimensional nerves}

Let $(W,S)$ be a Coxeter system. The \emph{nerve} of $(W,S)$ is a simplicial complex $L$ with vertex set $S$ and simplices being the spherical subsets of $S$. We are interested in the situation where $L$ is $1$--dimensional. Notice that this happens precisely when there is no triple of elements in $S$ that generate a finite  subgroup of $W$. 

Such a Coxeter system $(W,S)$ may be seen as a graphical complex of groups $W(\mQ)$, where $\mQ$ is the poset of simplices of $L$. The local groups at vertices of $L$ are cyclic of order $2$, each generated by an element of $S$. The local groups at edges of $L$ are finite dihedral groups, each generated by the two reflections corresponding to the endpoints of a given edge. The fundamental group of  $W(\mQ)$ is isomorphic to $W$.

Note that $W(\mQ)$ is $k$--huge if and only if $L$ if $k$--large. 
By Proposition~\ref{prop:2smallneighbourshyp}, if $W(\mQ)$ is $k$--huge, then $W$ is hyperbolic.

We remark that the Basic Construction $D(W, C(\abs{\mQ}))$, seen as a simplicial complex, is isomorphic to the Davis complex of $W$. However, the $\czero$ metric on $D(W, C(\abs{\mQ}))$ resulting from Theorem~\ref{thm:developable} is different from the $\czero$ metric on the Davis complex.

\subsection{Non-hyperbolic examples}\label{subsec:non-hypeg}

In order to construct graphical complexes with non-hyperbolic fundamental groups, one has to consider slightly more complicated inclusions of local groups.

\begin{definition}\label{ndef:toroidalgraph}
  Let $\mathbb{E}^2_h$ denote the regular hexagonal tiling of the Euclidean plane, where each hexagon has edge length $1$. Consider two combinatorial (i.e., respecting the tiling) translations $\tr_1, \tr_2$ of $\mathbb{E}^2_h$. Assume that $\tr_1, \tr_2$ are linearly independent. Thus the quotient $\mathbb{E}^2_h/ \langle \tr_1, \tr_2 \rangle$ is homeomorphic to a torus, which we denote by $T_h$. Assume additionally that $\tr_1$ and $\tr_2$ are chosen such that the girth of the $1$--skeleton $T_h^{(1)}$ is at least $6$ (note that this implies that in fact the girth of $T_h^{(1)}$ is equal to $6$).

  Let $\mQ$ be the poset of simplices of $T_h^{(1)}$ ordered by the reverse inclusion. That is, big vertices of $\mQ$ correspond to vertices of $T_h^{(1)}$ and small vertices of $\mQ$ correspond to the midpoints of edges of $T_h^{(1)}$. Clearly we have that $\abs{\mQ}$ is isomorphic to the barycentric subdivision of $T_h^{(1)}$. Observe that $\mQ$ is $6$--huge.
\end{definition}

\begin{definition}[Locally Klein-four complex groups]\label{ndef:kleiniangp}
  Let $\mQ$ be a $1$--dimensional poset such that every big vertex of $\mQ$ has valence $3$. Define a graphical complex of groups $G(\mQ)$ over $\mQ$ as follows:
  \begin{itemize}
    \item for every small vertex $v \in \mQ$ set $G_v$ to be $\mathbb{Z}/2$,
    \item for every big vertex $w \in \mQ$ set $G_w$ to be the Klein four-group $\mathbb{Z}/2 \times \mathbb{Z}/2$,
    \item for every big vertex $w$ and  vertices $v_1, v_2, v_3 \in \abs{\mQ_{\leq w}}$ define the monomorphisms $\phi_{v_iw}$ as the inclusions of three non-trivial elements of $\mathbb{Z}/2 \times \mathbb{Z}/2$.
  \end{itemize}
  
\end{definition}

Recall that a \emph{flat} in a $2$--dimensional $\czero$ space is an isometrically embedded copy of the Euclidean plane $\mathbb{E}^2$.

\begin{proposition}\label{nprop:exwithflats}
   Consider a locally Klein-four complex of groups $G(\mQ)$, where $\mQ$ is a poset described in Definition~\ref{ndef:toroidalgraph}. Let $G$ be the fundamental group of $G(\mQ)$ and let $D(G, C(\abs{\mQ}))$ be the associated Basic Construction. Then $D(G, C(\abs{\mQ}))$ contains a flat. Consequently, $G$ is not $\delta$--hyperbolic.
\end{proposition}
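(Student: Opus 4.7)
The plan is to construct an isometric embedding $f \colon \mathbb{E}^2_h \to D(G, C(\abs{\mQ}))$ whose image will be the desired flat. Equip $\mathbb{E}^2_h$ with the barycentric triangulation in which each hexagonal face is split into twelve $30$-$60$-$90$ triangles metrised as in Definition~\ref{def:metriconthecone}, with centers of hex faces as cone vertices, original hex vertices as big vertices, and midpoints of edges as small vertices; write $p \colon \mathbb{E}^2_h \to T_h$ for the covering map. Fix a base hex face $F_0$ of $\mathbb{E}^2_h$. For any hex face $F$, choose a chain $F_0, F_1, \ldots, F_n = F$ of edge-adjacent hex faces in $\mathbb{E}^2_h$ with $v_i$ the midpoint (in $\mQ$) of the edge shared by $F_i$ and $F_{i+1}$, and set
\[ g(F) = a_{v_0} a_{v_1} \cdots a_{v_{n-1}} \in G, \]
where $a_v$ denotes the non-trivial element of $G_v \cong \mathbb{Z}/2$. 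Map $F$ isometrically onto the hexagonal subcomplex of $[g(F), C(\abs{\mQ})]$ sitting above $p(F)$.

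Well-definedness of $g(F)$ reduces to triviality of the product around every loop in the face-adjacency graph of $\mathbb{E}^2_h$, and further to loops around individual hex vertices; at a hex vertex $w$ with small neighbors $v_1, v_2, v_3$, the Klein-four relation in $G_w$ gives $a_{v_1} a_{v_2} a_{v_3} = e$. Simplicial consistency of $f$ follows because for edge-adjacent $F, F'$ with shared midpoint $v$ and big endpoints $w_1, w_2$, the element $g(F)^{-1} g(F') = a_v$ lies in $G_v \cap G_{w_1} \cap G_{w_2}$, so the relevant small and big vertices are identified in $D(G, C(\abs{\mQ}))$; and around any big vertex $w$ all three cone-cell labels $g(F_i)$ lie in a single $G_w$-coset, so the image of $w$ is consistent across the three incident hex faces.

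To see that $f$ is a local isometry it suffices to check this on each vertex link. At cone vertices it is immediate, as a whole hex face maps into a single cone-cell. At a small vertex $v \in \mathbb{E}^2_h$, the link (a $4$-cycle of angular circumference $2\pi$ with edges of length $\pi/2$) maps isomorphically onto the link of $f(v)$ in $D(G, C(\abs{\mQ}))$, namely the join $\abs{\mQ_{>v}} \ast \{g \cdot c : g \in G_v\}$ from the proof of Theorem~\ref{thm:developable}. At a big vertex $w \in \mathbb{E}^2_h$ with three hex faces $F_1, F_2, F_3$ arranged cyclically around $w$, label the consecutive shared midpoints $v_\alpha, v_\beta, v_\gamma$ (a permutation of the three small neighbors of $w$). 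Then $g(F_2) = g(F_1) a_{v_\alpha}$, and using the Klein-four relation $a_{v_\alpha} a_{v_\beta} = a_{v_\gamma}$ one gets $g(F_3) = g(F_1) a_{v_\gamma}$. The three cone-vertex labels $g(F_1), g(F_2), g(F_3)$ and the three cosets $g(F_1) G_{v_\alpha}, g(F_2) G_{v_\beta}, g(F_3) G_{v_\gamma}$ are then pairwise distinct, so the image of the link of $w$ under $f$ is an embedded $6$-cycle of angular length $2\pi$ inside the bipartite link $\mathrm{Lk}(\tilde{w})$.

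Since $f$ is a local isometry from the flat plane $\mathbb{E}^2_h$ into the $\czero$ space $D(G, C(\abs{\mQ}))$ (Theorem~\ref{thm:developable}), it sends Euclidean geodesics to local geodesics, which are genuine geodesics in a $\czero$ space; hence $f$ is an isometric embedding and $f(\mathbb{E}^2_h)$ is a flat. For the second assertion, the deck group $\pi_1(T_h) \cong \mathbb{Z}^2$ acts on the flat as a lattice of translations, corresponding under $f$ to left multiplication in $G$ by two commuting elements $t_1, t_2$; since they act as linearly independent translations on a flat, they generate a $\mathbb{Z}^2$ subgroup of $G$, and a hyperbolic group cannot contain $\mathbb{Z}^2$. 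I expect the main obstacle to be the big-vertex link verification: it is essential that each big vertex has valence exactly $3$ and that the inclusions $\psi_{v_i w}$ hit precisely the three non-trivial elements of $G_w \cong \mathbb{Z}/2 \times \mathbb{Z}/2$, since this is what makes the three chosen cosets distinct and the $6$-cycle in $\mathrm{Lk}(\tilde{w})$ embedded.
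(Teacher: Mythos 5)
Your construction is correct and is essentially the paper's own proof: the paper likewise uses the locally injective map from the subdivided $1$--skeleton of $\mathbb{E}^2_h$ to $\abs{\mQ}$ to label the hexagons by elements of $G$ via products of the generators $g_v$, with consistency coming from the Klein-four relation at each big vertex, then maps each labelled hexagon into the cone-cell $[g, C(\abs{\mQ})]$ and concludes that the resulting map is an isometric embedding because it is a local isometry between $\czero$ spaces. The only cosmetic differences are that the paper checks the labelling is well defined by an explicit induction along an ordering of the hexagons rather than your reduction to monodromy around vertex loops (and leaves the link verifications as ``one easily checks''), and it deduces non-hyperbolicity directly from the presence of the flat instead of extracting a $\mathbb{Z}^2$ subgroup from the deck group.
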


\begin{proof}
  The proof is divided into four steps, some of which are illustrated in Figure~\ref{fig:flat}.
  
  The main idea is to define a locally injective map $p'$ from the barycentric subdivision of the $1$--skeleton of $\mathbb{E}^2_h$ into $\abs{\mQ}$, and then use it to label by elements of $G$ all $12$--cycles (subdivided hexagons) of the $1$--skeleton. Finally, using the labelling one defines an isometric embedding of (barycentrically subdivided) $\mathbb{E}^2_h$ into $D(G, C(\abs{\mQ}))$.\smallskip

  \noindent \textbf{Step \textlabel{1}{step:1}.} The map $p'$.\smallskip

  By Definition~\ref{ndef:toroidalgraph}, $\mQ$ is the poset of simplices of a torus $T_h\cong  \mathbb{E}^2_h/ \langle \tr_1, \tr_2 \rangle$. Let $\Gamma$ denote the barycentric subdivision of the $1$--skeleton of $\mathbb{E}^2_h$. Consider the universal covering $p \colon \mathbb{E}^2_h \to T_h$. By restricting $p$ to $1$--skeleta and passing to the barycentric subdivision, we obtain a map \[p' \colon \Gamma \to \abs{\mQ}.\] The crucial property of $p'$ is that it is locally injective. Observe also that since the girth of $\abs{\mQ}$ is $12$, $p'$ is injective on each $12$--cycle of $\Gamma$. Moreover, note that if two $12$--cycles of $\Gamma$ intersect then the intersection is either a vertex that is mapped by $p'$ to a big vertex of $\abs{\mQ}$, or a pair of consecutive edges that is mapped by $p'$ isomorphically onto $\abs{\mQ_{\geq v}}$ for some small vertex $v \in \abs{\mQ}$.\smallskip 

  \noindent \textbf{Step \textlabel{2}{step:2}.} Labelling the cycles of $\Gamma$ by elements of $G$.\smallskip 
 
  Pick a $12$--cycle $\alpha \subset\Gamma$ and label it by $e \in G$. We will label the remaining $12$--cycles according to the following rule. Given a $12$--cycle $\beta \subset \Gamma$, suppose that there exists a $12$--cycle $\gamma$ which intersects $\beta$ at a pair of edges and which is labelled by element $g \in G$. The intersection $\beta \cap \gamma$ is mapped by $p'$ onto $\abs{\mQ_{\geq v}}$ for some small vertex $v \in \mQ$. Label $\beta$ by $gg_v$ where $g_v$ is the generator of $G_v$. 

  We use the above rule to label all the remaining $12$--cycles of $\Gamma$. The order in which the rule is applied is indicated in Figure~\ref{fig:flat} with a red line.\smallskip 

  \input{flat.tex}

  \noindent \textbf{Step \textlabel{3}{step:3}.} The labelling is consistent.\smallskip

  Suppose that we have two $12$--cycles $\alpha$ and $\beta$ of $\Gamma$ that intersect at a pair of edges, and are labelled by $g_{\alpha}$ and $g_{\beta}$ respectively. Recall that in this situation we have $p'(\alpha  \cap \beta) =\abs{\mQ_{\geq v}}$ for some small vertex $v \in \abs{\mQ}$. We say that $\alpha$ and $\beta$ have \emph{consistent labels} if $g_{\beta}= g_{\alpha}g_v$.

  We claim that all pairs of $12$--cycles of $\Gamma$ have consistent labels. By Step~\hyperref[step:2]{2}, any pair of cycles that was used to define labelling (i.e.,\ any pair whose intersection is crossed by the red line in Figure~\ref{fig:flat}) has consistent labels. For the remaining intersecting pairs we will show the claim recursively, in the order indicated with a blue line in Figure~\ref{fig:flat}. We need the following lemma.

  \begin{lemma*}
    Consider three pairwise intersecting $12$--cycles $\alpha_1$, $\alpha_2$, $\alpha_3$, and assume that pairs $\alpha_1$, $\alpha_2$, and $\alpha_1$, $\alpha_3$ have consistent labels. Then the pair $\alpha_2$, $\alpha_3$ has consistent labels.
  \end{lemma*}

  \begin{proof}
    For $i \in \{1,2,3\}$ let $g_i \in G$ denote the label of $\alpha_i$. The triple intersection $\alpha_1 \cap \alpha_2 \cap \alpha_3$ is a vertex which is mapped by $p'$ to some big vertex $w \in \abs{\mQ}$, and the respective double intersections are mapped by $p'$ isomorphically onto $\abs{\mQ_{\geq v_{12}}}$, $\abs{\mQ_{\geq v_{13}}}$, $\abs{\mQ_{\geq v_{23}}}$ where $v_{12}$, $v_{13}$, $v_{23}$ are the three small vertices of $\abs{\mQ_{\leq w}}$.

    By assumption we have $g_2 =g_1g_{v_{12}}$ and $g_3 =g_1g_{v_{13}}$. We compute \[g_2^{-1}g_3=g_{v_{12}}^{-1}g_1^{-1} \cdot g_1g_{v_{13}}=g_{v_{12}}^{-1}g_{v_{13}}=g_{v_{23}},\] where the last equality follows from the fact that $G_w \cong \mathbb{Z}/2 \times \mathbb{Z}/2$ and $g_{v_{12}}$, $g_{v_{13}}$, and $g_{v_{23}}$ are the three non-trivial elements of $G_w$.
  \end{proof}

  Observe that any vertex $v$ of the blue path determines two $12$--cycles of $\Gamma$ intersecting at a pair of edges. (i.e.,\ the two cycles containing $v$). Enumerate vertices of the blue path with natural numbers and consider the pair of $12$--cycles corresponding to the first vertex (i.e.,\ the pair whose intersection is formed by two edges incident to vertex $\tilde{p}$ in Figure~\ref{fig:flat}). Observe that these two cycles, together with the $12$--cycle that intersects them at pairs of edges incident to vertices $\tilde{o}$ and $\tilde{q}$ respectively, form a triple which satisfies the assumptions of the Lemma. Thus by the Lemma these two cycles have consistent labels.

  Now we proceed by induction `along the blue path'. Consider the pair of $12$--cycles corresponding to the $n$--th vertex of the path, and assume that all the pairs corresponding to vertices $1,2, \ldots, n-1$ have consistent labels. One easily sees that the pair in question is always a part of a triple that satisfies the assumptions of the Lemma.  
  This finishes the proof of the claim.\smallskip 

  \noindent \textbf{Step \textlabel{4}{step:4}.} The embedding of the flat into $D(G, C(\abs{\mQ}))$.\smallskip
 
  Let $F$ denote the complex resulting from attaching to $\Gamma$ a simplicial cone along every of its $12$--cycles. Note that $F$ is isomorphic to the barycentric subdivision of $\mathbb{E}^2_h$, and therefore it carries a flat $\mathrm{CAT}(0)$ metric such that the edges have length $1$ (note that every hexagon of $\mathbb{E}^2_h$ has edge length $2$ with respect to this metric). Define a map \[i \colon F \to D(G, C(\abs{\mQ}))\] by sending a cone $C(\gamma)$ over a $12$--cycle $\gamma$ labelled by element $g$ to a subcone $[g, C(p'(\gamma))]$ of the cone $[g, C(\abs{\mQ})] \subset D(G, C(\abs{\mQ}))$. We claim that this assignment gives a well-defined map. 

  To see this, consider two $12$--cycles $\alpha$ and $\beta$, labelled by $g_1$ and $g_2$ respectively, and suppose that they intersect at a pair of edges which is mapped by $p'$ onto $\abs{\mQ_{\geq v}}$ for some small vertex $v \in \abs{\mQ}$. On one hand, we have $i(\alpha \cap \beta)=[g_1, \abs{\mQ_{\geq v}}]$, and on the other hand we have $i(\alpha\cap \beta)=[g_2, \abs{\mQ_{\geq v}}]$. Since  by Step~\hyperref[step:3]{3} the labelling is consistent, we get that $[g_1, \abs{\mQ_{\geq v}}] =[g_2, \abs{\mQ_{\geq v}}]$. This shows the claim.

  Now one easily checks that $i$ is simplicial (and thus continuous), injective, and locally isometric. Since both $F$ and $D(G, C(\abs{\mQ}))$ are $\czero$, we get that $i$ is an isometric embedding.
\end{proof}

\begin{remark}\label{rem:exwithflatsrelhyp}
  (1) Even if it is not spelled out explicitly, the concept of  a proper triple  plays a crucial role in the proof of Proposition~\ref{nprop:exwithflats}. The consistence of labelling in Step~\hyperref[step:3]{3} is defined in a way that any triple of $12$--cycles having a non-empty intersection is mapped by $i$ to cone-cells that form a proper triple.

  \textlabel{(2)}{rem:exwithflatsrelhyp.2}  It is not very difficult to prove that for $G(\mQ)$ as in Proposition~\ref{nprop:exwithflats}, flats in $D(G, C(\abs{\mQ}))$ are isolated in the sense of \cite{hru}. Intuitively, this is because both poset $\mQ$ and local groups of $G(\mQ)$ are very small (in terms of valence of vertices and number of elements respectively), and so there is not enough room for flats to branch. To show it, one proceeds very similarly as in \cite[Section~7]{wisecubu}.

  Acting geometrically on a $\czero$ $2$--complex with isolated flats implies that $G$ is relatively hyperbolic with respect to the collection of its maximal virtually $\mathbb{Z}^2$ subgroups, and in particular, that $G$ contains $\mathbb{Z}^2$ as a subgroup. It also implies that $G$ is biautomatic and that it satisfies the Tits Alternative (see \cite{hru} for all the above statements). For the sake of completeness, in Definition~\ref{ndef:torusdouble} we give an example of a poset $\mQ$, such that the locally Klein-four complex of groups over $\mQ$ has non-isolated flats in $D(G, C(\abs{\mQ}))$.
\end{remark}

In order to show that $D(G, C(\abs{\mQ}))$ has non-isolated flats, it is enough to find an isometric embedding of a \emph{triplane} into $D(G, C(\abs{\mQ}))$. Recall that a triplane is a union of three Euclidean halfplanes glued together along their boundary lines. To find such an embedding, we employ a similar strategy as in the proof of Proposition~\ref{nprop:exwithflats}. The idea is to construct a poset $\mQ$ whose geometric realisation is, in the sense of Definition~\ref{def:grcomplex}, the $1$--skeleton of a pair of tori identified along a simple closed curve. The universal cover of such space is homeomorphic to the product of a regular $4$--valent tree and a line, and thus one finds a plethora of triplanes inside it.

\begin{definition}\label{ndef:torusdouble}
  Let $T_h$ be a torus arising from Definition~\ref{ndef:toroidalgraph}. Consider a simple closed curve $P \to T_h$ such that in any hexagon $C$ of $T_h$ intersecting $P$, $P$ is a straight line joining the midpoints of a pair of opposite edges of $C$. 

  Let $\mathcal{R}$ be the poset of simplices of $T_h^{(1)}$ ordered by the reverse inclusion, and let $\{v_1,\ldots, v_k\}$ denote the small vertices of $\mathcal{R}$ corresponding to midpoints of edges of $T_h$ crossed by $P$. Let $\mQ= \mathcal{R} \cup_{\{v_1,\ldots, v_k\}}\mathcal{R}$ be the poset obtained from two copies of $\mathcal{R}$ by identifying the respective copies of vertices in $\{v_1,\ldots, v_k\}$.
\end{definition}

\begin{proposition}\label{prop:torusdoubletri}
  Consider a locally Klein-four complex of groups $G(\mQ)$, where $\mQ$ is a poset described in Definition~\ref{ndef:torusdouble}. Let $G$ be the fundamental group of $G(\mQ)$ and let $D(G, C(\abs{\mQ}))$ be the associated Basic Construction. Then $D(G, C(\abs{\mQ}))$ contains an isometrically embedded triplane. Consequently, $G$ is not relatively hyperbolic.
\end{proposition}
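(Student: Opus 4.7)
The plan is to replay the construction of Proposition~\ref{nprop:exwithflats} once for each copy of $\mathcal{R}$ in $\mQ = \mathcal{R}_1 \cup_{\{v_1,\ldots,v_k\}} \mathcal{R}_2$, obtaining two flats in $D(G, C(\abs{\mQ}))$ that share a common lift of $P$, and then to extract a triplane from their union. For $i \in \{1,2\}$, let $T_{h,i}$ denote the torus corresponding to $\mathcal{R}_i$ and let $p'_i \colon \Gamma_i \to \abs{\mathcal{R}_i} \subset \abs{\mQ}$ be the locally injective map from Step~\hyperref[step:1]{1} of Proposition~\ref{nprop:exwithflats}. Since big vertices of $\mathcal{R}_1$ and $\mathcal{R}_2$ are \emph{not} identified in $\mQ$, each local group $G_w \cong (\mathbb{Z}/2)^2$ at a big vertex $w$ involves only the three small neighbours of $w$ within a single $\mathcal{R}_i$, and the consistency lemma of Step~\hyperref[step:3]{3} applies verbatim. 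Steps~\hyperref[step:2]{2}--\hyperref[step:4]{4} thus yield isometric embeddings $j_i \colon F_i \to D(G, C(\abs{\mQ}))$, where each $F_i$ is (the barycentric subdivision of) the Euclidean plane $\mathbb{E}^2_h$.

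Next I would align the labellings so that fixed lifts $\tilde{P}_1 \subset F_1$ and $\tilde{P}_2 \subset F_2$ of $P$ are sent to the same geodesic line $\tilde{P}$ in $D(G, C(\abs{\mQ}))$. A lift of $P$ alternates between small vertices (midpoints of lifted $P$-edges, all of which belong to the shared set $\{v_1,\ldots,v_k\}$) and cone vertices (centres of lifted hexagons traversed by $P$). Label the base 12-cycle in each $\Gamma_i$ adjacent to $\tilde{P}_i$ by $e$. By Step~\hyperref[step:2]{2}, consecutive 12-cycles along $\tilde{P}_i$ receive labels $e, g_{v_{j_0}}, g_{v_{j_0}}g_{v_{j_1}}, \ldots$, where $(v_{j_n})$ records the shared small vertices crossed along $\tilde{P}_i$. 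Since $P$ is the same curve in both tori (the $T_{h,i}$ are copies of $T_h$ glued along the $v_j$'s), the sequences coincide, so the labels match and $j_1(\tilde{P}_1) = j_2(\tilde{P}_2) = \tilde{P}$. To prove $F_1 \cap F_2 = \tilde{P}$, suppose toward a contradiction that $F_1 \cap F_2$ contained some $p \notin \tilde{P}$. Since $F_1$ and $F_2$ are flats (hence convex) in the $\czero$ space $D(G, C(\abs{\mQ}))$, their intersection is convex in $F_1$, and so contains the convex hull in $F_1$ of $\tilde{P} \cup \{p\}$, an entire half-plane of $F_1$. But every 2-simplex of $F_1$ arises from a triangle in the barycentric subdivision of a hexagon of $T_{h,1}$, lying inside $\abs{\mathcal{R}_1} \subset \abs{\mQ}$, and cannot coincide with any 2-simplex of $F_2$, which lies inside $\abs{\mathcal{R}_2}$; hence $F_1 \cap F_2$ contains no 2-simplex, contradicting the half-plane claim.

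Finally I would set $T := F_1 \cup F_2^+$, where $F_2^+$ is one of the two half-planes of $F_2$ cut off by $\tilde{P}$. Injectivity of $T \to D(G, C(\abs{\mQ}))$ follows from $F_1 \cap F_2 = \tilde{P}$. To see that the inclusion is a local isometry, I would examine links along $\tilde{P}$: at a small vertex of $\tilde{P}$ the ambient link is $K_{4,2}$, with the 4-cycles of $F_1$ and $F_2$ sharing the two cone-vertex directions, so the $T$-link is a theta graph with three arcs of length $\pi$ (girth $2\pi$) isometrically embedded in $K_{4,2}$ since shortest paths in $K_{4,2}$ between $T$-link points pass through the shared cone vertices; at a cone vertex of $\tilde{P}$ the ambient link is $\abs{\mQ}$, and the $T$-link is the 12-cycle of the relevant $T_{h,1}$-hexagon together with a 6-arc of the corresponding $T_{h,2}$-hexagon, joined at two shared small vertices, which is isometrically embedded by the flat-case result (each 12-cycle is isometric in $\abs{\mathcal{R}_i} \subset \abs{\mQ}$) combined with the fact that the two hexagons meet only at these two shared small vertices. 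Hence $T$ is $\czero$ and the inclusion $T \hookrightarrow D(G, C(\abs{\mQ}))$ is a local isometry between $\czero$ spaces, therefore an isometric embedding. The conclusion that $G$ is not relatively hyperbolic then follows from \cite{hru}: a $\czero$ group acting geometrically on a space containing an isometrically embedded triplane has non-isolated flats. The main obstacle is the convexity-plus-dimension argument establishing $F_1 \cap F_2 = \tilde{P}$, which requires combining the $\czero$ geometry of $D(G, C(\abs{\mQ}))$ with the fine-scale structure of its cone-cells and the disjointness of big vertices in $\mathcal{R}_1$ and $\mathcal{R}_2$.
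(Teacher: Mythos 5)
Your proposal is correct in substance but assembles the triplane differently from the paper. The paper's (sketched) proof runs the machinery of Proposition~\ref{nprop:exwithflats} once, directly: it produces a locally injective map from the $1$--skeleton of a ``graphical hexagonal triplane'' (three half-planes of $\mathbb{E}^2_h$ glued along a line) into $\abs{\mQ}$, labels its $12$--cycles by elements of $G$, and embeds the coned-off complex into $D(G,C(\abs{\mQ}))$. You instead run the flat construction twice, once over each copy of $\mathcal{R}$ (which is legitimate: each big vertex together with its three small neighbours lies in a single copy, so the consistency lemma applies verbatim), match the labels along a common lift $\tilde P$ of $P$, and then take $F_1\cup F_2^+$. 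What your route buys is that you never have to redo the labelling on a branched $1$--complex; what it costs is the two extra verifications you correctly identify: that $F_1\cap F_2=\tilde P$, and that the inclusion of the resulting triplane is a local isometry along $\tilde P$, which you check via the theta-graph links ($K_{3,2}\subset K_{4,2}$ at small vertices, $12$--cycle plus $6$--arc inside $\abs{\mQ}$ at cone vertices). Both checks are sound under the standing (and, as in the paper, implicit) assumption that the identified small vertices are pairwise at distance at least $6$ inside each copy, i.e.\ that $\mQ$ is genuinely $6$--huge; at the cone vertex this is what rules out shortcuts through shared small vertices other than the two entry/exit points, which your parenthetical ``the two hexagons meet only at these two shared small vertices'' does not by itself address.

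Two local slips, both repairable. First, the convex hull in $F_1$ of $\tilde P\cup\{p\}$ is not a half-plane but a strip of width $d(p,\tilde P)$; and a thin strip need not contain a full $2$--simplex of $F_1$, so ``$F_1\cap F_2$ contains no $2$--simplex'' is not quite the right contradiction. The correct (and stronger) observation is that every point of $F_1\cap F_2$ lies over $C(\abs{\mathcal{R}_1})\cap C(\abs{\mathcal{R}_2})=C(\{v_1,\ldots,v_k\})$, a $1$--complex, so $F_1\cap F_2$ has empty interior in $F_1$, while a strip does not; with this substitution your convexity argument closes. Second, ``label the base $12$--cycle adjacent to $\tilde P_i$'' should be ``a $12$--cycle whose centre lies on $\tilde P_i$'', chosen to correspond to the same hexagon of $T_h$ in both copies, with the lifts aligned; then consistency gives $h_{n+1}=h_ng_{v_{j_n}}$ on both sides and the two lifts of $P$ coincide as subcomplexes of $D(G,C(\abs{\mQ}))$, as you intend.
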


\begin{proof}[Sketch of the proof]  
  The approach is analogous to that of Proposition~\ref{nprop:exwithflats}. One verifies that $\abs{\mQ}$ admits a locally injective map from the $1$--skeleton of a `graphical hexagonal triplane'. A graphical hexagonal triplane is a union of three halfplanes glued along their boundary lines, where each halfplane is a subspace of $\mathbb{E}^2_h$ bounded by a straight line passing through midpoints of a pair of opposite edges of some hexagon in $\mathbb{E}^2_h$. 

  Then one can use local injectivity of the aforementioned map to label by elements of $G$ and cone off cycles (and graphs) of the $1$--skeleton, and define an embedding of the resulting complex into  $D(G, C(\abs{\mQ}))$. By \cite[Theorem~5.4]{hru} $G$ is not relatively hyperbolic.
\end{proof}
    
We conclude with a description of flats in Basic Construction $D(G, C(\abs{\mQ}))$. 

\begin{proposition}\label{nprop:structureofflat}
  Assume that $G$ is a $6$--huge graphical complex of groups and let $D(G, C(\abs{\mQ}))$ be the associated Basic Construction. Suppose $F \subset D(G, C(\abs{\mQ}))$ is a flat. Then $F$ is a subcomplex isomorphic to the barycentric subdivision of $\mathbb{E}_h^2$, such that:
  \begin{itemize} 
    \item cone vertices of $F$ correspond to barycenters of hexagons of $\mathbb{E}_h^2$,
    \item small vertices of $F$ correspond to midpoints of edges of $\mathbb{E}_h^2$,
    \item big vertices of $F$ correspond to vertices of $\mathbb{E}_h^2$.
  \end{itemize}
  Moreover, for any three hexagons meeting at a single vertex, their labelling elements of $G$ form a proper triple. 
\end{proposition}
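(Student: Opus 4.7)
The plan is to analyze $F$ using the piecewise Euclidean metric on $D(G, C(\abs{\mQ}))$ from Definition~\ref{def:metriconthecone}, in which each triangle has fixed angles $\frac{\pi}{2}$, $\frac{\pi}{3}$, $\frac{\pi}{6}$ at the small, big, and cone vertex respectively, together with the observation from Subsection~\ref{subsec:threetypes} that every $2$-simplex contains one vertex of each type. First I would establish that $F$ is a subcomplex of $D(G, C(\abs{\mQ}))$. By a standard argument for flats in $\czero$ piecewise Euclidean $2$-complexes built from a single shape of Euclidean triangle, the intersection of $F$ with the interior of each $2$-simplex $\sigma$ is either empty or equal to all of $\mathrm{int}(\sigma)$: otherwise the boundary of $F \cap \sigma$ inside $\sigma$ would contain a geodesic ray that, upon crossing into an adjacent $2$-simplex, would violate the equality of angles on the two sides of the shared edge.

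Once $F$ is known to be a subcomplex, at each vertex $v \in F$ the link of $v$ in $F$ is a topological circle of total angular length $2\pi$. Since all triangles of $F$ incident to $v$ contribute the same angle at $v$, namely $\frac{\pi}{6}$, $\frac{\pi}{2}$, or $\frac{\pi}{3}$ according to whether $v$ is a cone, small, or big vertex, the link must consist of exactly $12$, $4$, or $6$ edges respectively. Moreover, since each triangle has one vertex of each type, consecutive triangles in the link around $v$ alternate between the two vertex types distinct from that of $v$.

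These rigid local constraints force the combinatorial type of $F$ uniquely. The $12$ triangles around each cone vertex $c$ form a hexagonal region, which is combinatorially the barycentric subdivision of a hexagon with $c$ as barycenter and $6$ alternating big and small vertices around its boundary. Two such hexagons sharing a small vertex account for the $4$ triangles there, while three such hexagons meet at each big vertex, contributing the required $6$ triangles. Piecing these together produces exactly the barycentric subdivision of $\mathbb{E}_h^2$, with the three vertex types identified as stated.

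For the last claim, fix a big vertex $w \in F$, and let $[g_1, C(\abs{\mQ})]$, $[g_2, C(\abs{\mQ})]$, $[g_3, C(\abs{\mQ})]$ be the three cone-cells of $D(G, C(\abs{\mQ}))$ whose intersections with $F$ are the three hexagons incident to $w$. By the combinatorial picture, their triple intersection is precisely the big vertex $w$, while each pair shares a copy of $\abs{\mQ_{\geq v_{ij}}}$ coming from the midpoint $v_{ij}$ of the edge of $\mathbb{E}_h^2$ common to the two corresponding hexagons. By Lemma~\ref{lem:structureofpropertriple}, this is exactly the structure of a proper triple. The main obstacle in carrying out this plan is the subcomplex claim; ruling out flats that cross $2$-simplices transversally to their cell structure requires a careful geometric argument relying on the rigidity of the prescribed angles and on the $6$-hugeness of $\mQ$.
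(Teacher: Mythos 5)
Your outline is sound, and it is worth noting that the paper itself gives no argument here (the proof is explicitly ``left to the reader''), so your sketch is filling a genuine gap rather than paralleling an existing one. Two refinements would tighten it. First, the subcomplex step that you flag as the main obstacle does not need the angle-balancing argument you gesture at: for $x \in F$ in the interior of a $2$--simplex $\sigma$, an isometrically embedded Euclidean disc of $F$ around $x$ must be a round disc inside $\mathrm{int}(\sigma)$, so $F \cap \mathrm{int}(\sigma)$ is open in $\mathrm{int}(\sigma)$; on the other hand $F$, being a complete subspace of a $\czero$ space, is closed, so $F \cap \mathrm{int}(\sigma)$ is empty or all of $\mathrm{int}(\sigma)$, and since $F$ is a surface without boundary every point of $F$ lies in the closure of such full simplices. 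Relatedly, your claim that the link of a vertex $v$ in $F$ is a circle of angular length exactly $2\pi$ uses that flats are convex in $\czero$ spaces (the image of a straight segment is a geodesic of $D(G,C(\abs{\mQ}))$), so the induced and intrinsic metrics on $F$ agree; then $\geq 2\pi$ comes from the link condition of Theorem~\ref{thm:developable} and $\leq 2\pi$ from flatness, which justifies the counts $12$, $4$, $6$ at cone, small, and big vertices via the fixed angles of Definition~\ref{def:metriconthecone}. Second, in the final step the assertion that the triple intersection is precisely the big vertex deserves one explicit line: if the triple intersection contained a small vertex, then some pairwise intersection of the three (distinct) cone-cells would contain two distinct small vertices of $\mQ$, which is impossible since such intersections are of the form $[g,\abs{\mQ_{\geq v}}]$ for a single small vertex $v$ (as in the proof of Lemma~\ref{lem:structureofpropertriple}); with that observation the properness of the triple, and hence the last clause of Proposition~\ref{nprop:structureofflat}, follows exactly as you say.
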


\begin{proof}
  The proof is an elementary exercise and is left to the reader.
\end{proof}

\section{C(4)--T(4) theory}\label{sec:c4t4}

The condition of `not containing proper triples' may be seen as a form of the $T(4)$--condition from classical small cancellation theory (see Remark~\ref{rem:notriplesisT4}). In this section we outline the theory of $C(4)$--$T(4)$ graphical complexes of groups.

Note that in the setting of graphical complexes of groups, by Lemma~\ref{lem:structureofpropertriple} the `no proper triples' condition has a concrete and easily checkable form. Moreover, it can be checked without referring to the Basic Construction, and in particular it does not require $G(\mQ)$ to be developable. This is because the vertex links of the Basic Construction are isomorphic to the local developments, and the latter depend only on $G(\mQ)$. For the sake of clarity we include the definition 

\begin{definition}[$T(4)$--condition]
  Let $G(\mQ)$ be a graphical complex of groups. We say that $G(\mQ)$ satisfies the $T(4)$--condition if for any big vertex $w \in \mQ$ and for any two small vertices $v_1, v_2 \in \mQ_{\leq w}$, and for any two elements $g_1 \in G_{v_1}$ and $g_2 \in G_{v_2}$ there does not exist a vertex $v_3 \leq w$ such that $g_1^{-1}g_2 \in G_{v_3}$.
\end{definition}

Note that the above definition is essentially the same as Lemma~\ref{lem:structureofpropertriple} after `translating its statement by $g_1$'.\medskip

From now on assume that $G(\mQ)$ is a graphical complex of groups that satisfies conditions $C(4)$ (i.e., it is $4$--huge) and $T(4)$, and let $G$ be its fundamental group. Below we present a streamlined account of properties of $G(\mQ)$.

\begin{itemize}
  \item $G(\mQ)$ is non-positively curved. Thus $G(\mQ)$ is strictly developable and $D(G, C(\abs{\mQ}))$ admits a piecewise Euclidean $\czero$ metric. Consequently, $G$ is a $\czero$ group. The proof is analogous to proof of Theorem~\ref{thm:developable}. One metrises cone $ C(\abs{\mQ})$ such that angles at small vertices are $\frac{\pi}{2}$ and angles at big and cone vertices are $\frac{\pi}{4}$.
  
  \item Assuming Convention~\ref{conv:connected}, the Basic Construction $D(G, C(\abs{\mQ}))$ is geodesically complete. Therefore $G$ is infinite. One can construct explicit elements of infinite order in a manner analogous to Proposition~\ref{prop:inforderelements}.
  
  \item $G$ satisfies the Tits Alternative. To prove it in this case, it suffices to define a simplicial map from $D(G, C(\abs{\mQ}))$ to a single triangle with angles  $\frac{\pi}{2}$, $\frac{\pi}{4}$, and $\frac{\pi}{4}$, which restricts to an isometry on every triangle of  $D(G, C(\abs{\mQ}))$. This is done in the same way as in Proposition~\ref{prop:titsalternative}.
\end{itemize}

The main drawback of $C(4)$--$T(4)$ theory is that the Wise complex of Definition~\ref{def:wisecomplex} does not have to be systolic, and thus it cannot be used to show that $G$ is biautomatic. However, in \cite[Theorem~6.18 and Theorem~8.1]{Helly} biautomaticity is shown for all groups acting geometrically on graphical $C(4)$--$T(4)$ complexes.\smallskip

The advantage of $C(4)$--$T(4)$ theory is that one can easily describe examples where $D(G, C(\abs{\mQ}))$ contains flats, as opposed to a rather involved approach in Proposition~\ref{nprop:exwithflats}. For example, the right-angled Coxeter group corresponding to the $4$--gon, seen as a graphical complex of groups, satisfies conditions $C(4)$ and $T(4)$. This group is the product of two infinite dihedral groups, and $D(G, C(\abs{\mQ}))$ is isomorphic to the Euclidean plane with the standard cubical structure. More generally, consider the right-angled Coxeter group corresponding to the complete bipartite graph $K_{n,m}$ for $n,m\geq 3$. This group contains a subgroup isomorphic to $F_2 \times F_2$. Thus the associated Basic Construction $D(G, C(\abs{\mQ}))$ contains non-isolated flats, and therefore $G$ is not relatively hyperbolic.\smallskip

One can ask when $G$ is hyperbolic. We claim that this is the case, for instance, when $G(\mQ)$ satisfies conditions $C(5)$ and $T(4)$. However, in order to show it, one cannot use the proof of Proposition~\ref{prop:7systolicishyperbolic} or Theorem~\ref{thm:notripleshyperbolicity}, as they both rely on the theory of systolic complexes. Instead, one can directly construct a piecewise hyperbolic $\mathrm{CAT}(-1)$ structure on $D(G, C(\abs{\mQ}))$ in the same manner as in Proposition~\ref{prop:piecewisehyperbolic}. One metrises $D(G, C(\abs{\mQ}))$  with a single shape of cells, which is a hyperbolic triangle with angles $\frac{\pi}{2}$, $\frac{\pi}{4}$, and $\frac{\pi}{5}$ at respectively small, big, and cone vertices.

\section{Comments and questions}~\label{sec:commques}

In the article we have explored basic geometric and algebraic properties of graphical complexes of groups. In this section we gather questions and ideas regarding some more advanced properties. Throughout the section let $G(\mQ)$ be a $6$--huge graphical complex of groups, $G$ its fundamental group, and $D(G, C(\abs{\mQ}))$ the associated Basic Construction.

\begin{enumerate}
  \item Is $G$ virtually torsion free? Observe that it is the case for graphical products, and in particular for right-angled Coxeter groups. Proposition~\ref{prop:inforderelements} provides plenty of infinite order elements, yet, in order to obtain a torsion free subgroup $G' \leq G$ with $D(G, C(\abs{\mQ}))/G'$ compact, one needs to include additional, more complicated infinite order elements. Intuitively, the axes of these elements should pass through big vertices of $D(G, C(\abs{\mQ}))$. 

  \item Is $G$ residually finite? We do not have any intuition regarding this, but it seems like a reasonable question to ask.

  \item Can $G$ contain a subgroup isomorphic to $F_2 \times F_2$? Notice that in general a systolic group $G$ can contain $F_2 \times F_2$, however, the minimal dimension of a systolic complex acted upon such $G$ is equal to $3$. On the other hand $F_2 \times F_2$ admits an action on a $2$--dimensional $C(6)$--graphical small cancellation complex.

  \item Can $G$ have Kazhdan's property $\mathrm{(T)}$? We point out that for non-hyperbolic groups described in Subsection~\ref{subsec:non-hypeg}, the Basic Construction $D(G, C(\abs{\mQ}))$, together with its hexagonal tiling of flats, resembles an $\tilde{A_2}$--building. Some lattices in such buildings are known to have property $\mathrm{(T)}$.

  \item Under which conditions can $G$ be cubulated? We are not aware of an example of $G$ that cannot act properly (and cocompactly) on a $\czero$ cube complex. Recall that if $G$ is cubulated then it cannot have property $\mathrm{(T)}$. One idea for cubulating $G$ is to start with a poset $\mQ$ such that small vertices of $\mQ$ have valence $2$ and $\abs{\mQ}$ has a structure of a wall space. The walls in $\abs{\mQ}$ naturally determine walls in $D(G, C(\abs{\mQ}))$. These walls are then preserved by the $G$--action and thus we get a cubulation. To obtain examples where $\abs{\mQ}$ has a structure of a wall space, one begins with an arbitrary $\mQ$ and then takes its $\mathbb{Z}/k$--homology cover for $k \geq 2$, together with the assignment of local groups such that the covering map becomes a morphism of graphical complexes of groups. Note that the girth of a finite cover of a graph is always larger than the  girth of the graph, and hence in majority of cases this method leads to hyperbolic examples.

  \item Does $D(G, C(\abs{\mQ}))$ satisfy the Flat Closing Conjecture? Recall that this conjecture states that if $D(G, C(\abs{\mQ}))$ contains a flat then $G$ contains a subgroup isomorphic to $\mathbb{Z}^2$. We remark that the group appearing in Proposition~\ref{nprop:exwithflats} does contain a copy of $\mathbb{Z}^2$ (see Remark~\ref{rem:exwithflatsrelhyp}(\hyperref[rem:exwithflatsrelhyp.2]{2})). One can show that so does the group appearing in Proposition~\ref{prop:torusdoubletri}. 
  
  Moreover, we expect the answer to be positive in the full generality. This is because the structure of flats in $D(G, C(\abs{\mQ}))$ seems to be very restricted and closely linked to the combinatorial structure of $G(\mQ)$. Since the latter is given in terms of finitely many local groups, we think that it is very unlikely for $D(G, C(\abs{\mQ}))$ to contain only aperiodic flats.
\end{enumerate}

\begin{bibdiv}

\begin{biblist}

\bib{BH}{book}{
  author={Bridson, Martin R.},
  author={Haefliger, Andr{\'e}},
  title={Metric spaces of non-positive curvature},
  series={Grundlehren der Mathematischen Wissenschaften [Fundamental
    Principles of Mathematical Sciences]},
  volume={319},
  publisher={Springer-Verlag, Berlin},
  date={1999},
  pages={xxii+643},
  isbn={3-540-64324-9},
  review={\MR{1744486}},
  doi={10.1007/978-3-662-12494-9},
}

\bib{CCHO}{article}{
  title     ={Weakly modular graphs and nonpositive curvature},
  author    ={Chalopin, Jérémie},
  author    ={Chepoi, Victor},
  author    ={Hirai, Hiroshi}, 
  author    ={Osajda, Damian},
  journal={Mem. Amer. Math. Soc.},  
  status    ={to apper},
  date      ={2014},
  eprint    ={https://arxiv.org/abs/1409.3892},
}

\bib{Helly}{article}{
  title     ={Helly groups},
  author    ={Chalopin, Jérémie},
  author    ={Chepoi, Victor},
  author    ={Genevois, Anthony},
  author    ={Hirai, Hiroshi}, 
  author    ={Osajda, Damian},
  status    ={preprint},
  date      ={2020},
  eprint    ={https://arxiv.org/abs/2002.06895},
}

\bib{Davbook}{book}{
 author={Davis, Michael W.},
 title={The geometry and topology of Coxeter groups},
 series={London Mathematical Society Monographs Series},
 volume={32},
 publisher={Princeton University Press, Princeton, NJ},
 date={2008},
 pages={xvi+584},
 isbn={978-0-691-13138-2},
 isbn={0-691-13138-4},
 review={\MR{2360474}},
}

\bib{E1}{article}{
   author={Elsner, Tomasz},
   title={Flats and the flat torus theorem in systolic spaces},
   journal={Geom. Topol.},
   volume={13},
   date={2009},
   number={2},
   pages={661--698},
   issn={1465-3060},
   review={\MR{2469526 (2009m:20065)}},
   doi={10.2140/gt.2009.13.661},
}

\bib{hru}{article}{
   author={Hruska, G. Christopher},
   title={Nonpositively curved 2-complexes with isolated flats},
   journal={Geom. Topol.},
   volume={8},
   date={2004},
   pages={205--275},
   issn={1465-3060},
   review={\MR{2033482}},
   doi={10.2140/gt.2004.8.205},
}

\bib{JS2}{article}{
 author={Januszkiewicz, Tadeusz},
 author={{\'S}wi{\c{a}}tkowski, Jacek},
 title={Simplicial nonpositive curvature},
 journal={Publ. Math. Inst. Hautes \'Etudes Sci.},
 number={104},
 date={2006},
 pages={1--85},
 issn={0073-8301},
 review={\MR{2264834 (2007j:53044)}},
}

\bib{OsaPry}{article}{
  title     ={Classifying spaces for families of subgroups for systolic groups},
  author    ={Osajda,Damian},
  author    ={Prytu{\l}a, Tomasz},
  journal={Groups Geom. Dyn.},
  volume={12},
  date={2018},
  number={3},
  pages={1005--1060},
  issn={1661-7207},
  doi={10.4171/GGD/461},
  }

\bib{OsaPrzy}{article}{
  title     ={Tits Alternative for groups acting properly on 2-dimensional recurrent complexes},
  author    ={Osajda, Damian},
  author    ={Przytycki, Piotr},
  status    ={preprint},
  date      ={2019},
  eprint    ={https://arxiv.org/abs/1904.07796},
}

\bib{PePry}{article}{,
  AUTHOR = {Petrosyan, Nansen},
  AUTHOR = {Prytu\l a, Tomasz},
  TITLE = {Bestvina complex for group actions with a strict fundamental domain},
  YEAR = {2017},
  eprint    ={https://arxiv.org/abs/1712.07606},
  journal={to appear in Groups Geom. Dyn.},
}

\bib{Przyhyp}{article}{
   author={Przytycki, Piotr},
   title={Systolic groups acting on complexes with no flats are
   word-hyperbolic},
   journal={Fund. Math.},
   volume={193},
   date={2007},
   number={3},
   pages={277--283},
   issn={0016-2736},
   review={\MR{2289773}},
   doi={10.4064/fm193-3-4},
}

\bib{wisesix}{article}{
  title     ={Sixtolic complexes and their fundamental groups},
  author    ={Wise, Daniel T.},
  status    ={unpublished manuscript},
  date      ={2003},
}

\bib{wisecubu}{article}{
  author={Wise, D. T.},
  title={Cubulating small cancellation groups},
  journal={Geom. Funct. Anal.},
  volume={14},
  date={2004},
  number={1},
  pages={150--214},
  issn={1016-443X},
  review={\MR{2053602}},
  doi={10.1007/s00039-004-0454-y},
}

\bib{wisehier}{article}{
  title     ={The structure of groups with a quasiconvex hierarchy},
  author    ={Wise, Daniel T.},
  status    ={submitted},
  date      ={2011},
  eprint    ={http://www.math.mcgill.ca/wise/papers.html},
  pages     ={1-189},
}

\end{biblist}
\end{bibdiv}

\end{document}